\newtheorem{remark}[example]{Remark}
\newcommand{\hide}[1]{}
\newcommand{\beq}[1]{\begin{equation}\label{#1}}
	\newcommand{\eeq}{\end{equation}}
\newcommand{\qed}{\null\hfill\endmark}
\newcommand{\C}[1]{{\protect\mathcal{#1}}}
\newcommand{\I}[1]{{\mathbbm #1}}
\newcommand{\V}[1]{\mathbold{#1}}
\newcommand{\RR}[1]{\mathsf{#1}} 
\newcommand{\e}{\varepsilon}
\renewcommand{\mid}{:}
\renewcommand{\ge}{\geqslant}
\renewcommand{\le}{\leqslant}
\renewcommand{\preceq}{\preccurlyeq}
\newcommand{\actson}{{\curvearrowright}}
\newcommand{\dd}{\,\mathrm{d}}
\newcommand{\bex}{\begin{example}\rm }
	\newcommand{\eex}{\end{example}}
\newcommand{\cqed}{\nolinebreak\mbox{\hspace{5 true pt}%
		\rule[-0.85 true pt]{2.0 true pt}{8.1 true pt}}}
\newcommand{\bcpf}[1][Proof of Claim]{\smallskip\noindent{\it #1}}
\newcommand{\ecpf}{\cqed \medskip}
\newcommand{\ra}{\mbox{$\Rightarrow$}}
\newcommand{\DIAG}{\mathrm{Diag}}
\newcommand{\lex}{\mathrm{Lex}}
\newcommand{\induced}[2]{#1\upharpoonright #2}
\newcommand{\dist}{\mathrm{dist}}
\newcommand{\Ga}{\Gamma}
\newcommand{\ga}{\gamma}
\newcommand{\Local}{{\sf LOCAL}}
\newcommand{\graph}[1]{#1}
\newcommand{\Det}{\RR{Det}}
\newcommand{\Rand}{\RR{Rand}}
\newcommand{\var}{\mathrm{supp}}
\newcommand{\replace}[2]{{#1}\leadsto{#2}}
\newcommand{\dom}{\pi_0}
\newcommand{\boxdim}{\operatorname{dim}_{\C M}}
\newcommand{\OptionalCG}{}
\newcommand{\OptionalG}{}
\newcommand{\RT}{\RR C}
\newcommand{\CH}{{\C H}}
\newcommand{\Co}[1]{\cite[#1]{Cohn13mt}}
\newcommand{\Ts}[1]{\cite[#1]{Tserunyan19idst}}
\newcommand{\Ke}[1]{\cite[#1]{Kechris:cdst}}
\newcommand{\TW}[1]{\cite[#1]{TomkowiczWagon:btp}}
\newcommand{\Lo}[1]{\cite[#1]{Lovasz:lngl}}
\renewcommand{\rho}{t}
\begin{document}
\makebcctitle

\begin{abstract}
	We provide a gentle introduction, aimed at non-experts, to Borel combinatorics
	that studies definable graphs on topological spaces.
	This is an emerging field on the borderline between combinatorics and descriptive set theory  with deep connections to many other areas.
	
	After giving some background material, we present in careful detail some basic tools and results on the existence of Borel satisfying assignments: Borel versions of greedy algorithms and augmenting procedures, local rules, Borel transversals, etc. Also, we present the construction of Andrew Marks of acyclic Borel graphs for which the greedy bound $\Delta+1$ on the Borel chromatic number is best possible. 
	
	In the remainder of the paper we briefly discuss various topics such as relations to \Local\ algorithms, measurable versions of Hall's marriage theorem and of the Lov\'asz Local Lemma, applications to equidecomposability, etc.
\end{abstract}

	\section{Introduction}
	\label{se:Intro}

	\emph{Borel combinatorics}, also called \emph{measurable combinatorics} or \emph{descriptive (graph) combinatorics}, is an emerging and actively developing field that studies graphs and other combinatorial structures on topological spaces that are ``definable'' from the point of view of descriptive set theory. It 
	is an interesting blend of descriptive set theory and combinatorics that also has deep connections to measure theory, probability, group actions, ergodic theory, etc. We refer the reader to the survey by Kechris and Marks~\cite{KechrisMarks:survey}.
	
	Many recent advances in this field came from adopting various proofs and concepts of finite combinatorics to the setting of descriptive set theory. However, this potentially very fruitful interplay is not yet fully explored. One of the reasons is that
	a fairly large amount of background is needed in order to understand the proofs (or even the statements) of some results of this field. Thus the purpose of this paper is give a gentle introduction to some basic concepts and tools, aimed at non-experts, as well as to provide some pointers to further reading for those who would like to learn more.

	In order to give quickly a flavour of what types of objects and questions we will consider, here is one simple but illustrative example.
	
	\begin{Def}[Irrational Rotation Graph $\C R_\alpha$]
		\label{ex:Rotation} Let $\alpha\in\I R\setminus\I Q$ be irrational. Let $\C R_\alpha$ be the graph whose vertex set is the half-open interval $[0,1)$ of reals, where $x,y\in [0,1)$ are adjacent if their difference is equal to $\pm\alpha$ modulo $1$.
	\end{Def}

	Another way to define $\C R_\alpha$ is to consider the transformation $T_\alpha:[0,1)\to[0,1)$ which maps $x$ to $x+\alpha\pmod 1$; then the edge set consists of all unordered pairs $\{x,T_\alpha(x)\}$ over $x\in [0,1)$. Thus $\C R_\alpha$ can be viewed as the graph coming from an action of the group $\I Z$ on $[0,1)$ with the generator $1\in\I Z$ acting via~$T_\alpha$.  The name comes from identifying the interval $[0,1)$ with the unit circle $\I S^1\subseteq \I R^2$ via $x\mapsto (\cos (2\pi x),\sin(2\pi x))$ where $T_\alpha$ corresponds to the rotation by angle $2\pi\alpha$. Since $\alpha$ is irrational, $\C R_\alpha$ is a 2-regular graph with each component being a \emph{line} (a doubly-infinite path).
	
	Under the Axiom of Choice, $\C R_\alpha$ is combinatorially as trivial as a single line. For example, if we want to properly 2-colour its vertices then we can take a \emph{transversal} $S$ (a set containing exactly one vertex from each component) and colour every $x\in [0,1)$ depending on the parity of the distance from $x$ to $S$ in $\C R_\alpha$. However, this proof is non-constructive.
	
	So what is the chromatic number of $\C R_\alpha$ if we want each colour class, as a subset of the real interval $[0,1)$, be ``definable''? Of course, we have to agree first which sets are ``definable''. We would like the family of such sets to be closed under Boolean operations and under countable unions/intersections (i.e.\ to be a $\sigma$-algebra) so that various constructions, including some that involve passing to a limit after countably many iterations, do not take us outside the family. There are three important $\sigma$-algebras on $V=[0,1)$. 
	
	One, denoted by $\C B$, consists of \emph{Borel} sets and is, by definition, the smallest $\sigma$-algebra that contains all open sets, which for $[0,1)$ is equivalent to containing all intervals or just ones with rational endpoints. We can build Borel sets by starting with open sets and iteratively adding complements and countable unions of already constructed sets. Then each Borel set appears after $\mathbold{\beta}$-many iterations for some countable ordinal~$\mathbold{\beta}$ and thus
	can be ``described'' with countably many bits of information (motivating the name of descriptive set theory).
	
	The other two $\sigma$-algebras can be built by taking all Borel sets which can be additionally modified by adding or removing any ``negligible'' set of elements. For example, from the topological point of view any \emph{nowhere dense} set $X$ (that is, a set whose closure has empty interior) is ``negligible'': every non-empty open set $U$ contains a ``substantial'' part (namely, some non-empty open $W\subseteq U$) that avoids $X$ completely. We also consider \emph{meager} sets (that is, countable unions of nowhere dense sets) as ``negligible''. Now, the corresponding $\sigma$-algebra $\C T$ (of all sets $X$ with the symmetric difference 
	$X\bigtriangleup E$ being meager for some Borel $E$) consists precisely of the sets that have the \emph{property of Baire} and thus may be regarded as ``topologically nice''.
	
	The definition of our third $\sigma$-algebra $\C B_\mu$ of \emph{$\mu$-measurable} sets depends on a measure $\mu$ on Borel sets (which we take to be the Lebesgue measure in our Example~\ref{ex:Rotation}).  A set $X$ is called \emph{$\mu$-measurable} (or just \emph{measurable} when $\mu$ is understood) if there is a 
	Borel set $E$ such that the symmetric difference $X\bigtriangleup E$ is \emph{null} (that is, is contained in a Borel set of measure 0). For $[0,1)$ and the Lebesgue measure, a set is null if and only if, for every $\e>0$, it can be covered by countably many intervals whose sum of lengths is at most~$\e$. The measure $\mu$ extends in the obvious way to a measure on~$\C B_\mu\supseteq\C B$.

	Clearly, $\C B$ is a sub-family of $\C T$ and of $\C B_\mu$ but the last two $\sigma$-algebras are not compatible, with the notions of a ``negligible'' set being quite strikingly different. For example, one can partition the interval $[0,1)$ into two Borel sets one of which is meager and the other
	has Lebesgue measure 0,
	see e.g.\ \cite[Theorem 1.6]{Oxtoby:mc}.
	
	As it turns out, it is impossible to find a proper 2-colouring $[0,1)=X_0\cup X_1$ of the graph $\C R_\alpha$ from Example~\ref{ex:Rotation} with $X_0,X_1\in\C B_\mu$. Indeed, since the colours must alternate on each line, we have that the measure-preserving map $T_\alpha$ swaps $X_0$ and $X_1$ so these sets, if measurable, have Lebesgue measure $1/2$ each. However, by $T_\alpha(T_\alpha(X_0))=X_0$ this would contradict the fact that the composition $T_\alpha\circ T_\alpha=T_{2\alpha}$ is \emph{ergodic}, meaning that every invariant measurable set has measure 0 or~1. (For two different proofs of the last property, via Fourier analysis and via Lebesgue's density theorem, see e.g.~\cite[Proposition~4.2.1]{VianaOliveira:fet}.)%
	\hide{In fact, any transversal $S$ for $\I R_\alpha$ is a version of a Vitali's set and the Lebesgue measure cannot be extended to $S$ if we want to keep it invariant under rotations and countably additive: since countably many rotations of $S$, namely, $T_\alpha^n(S)$, $n\in\I Z$, are pairwise disjoint subsets of the measure-1 interval $[0,1)$, the measure of $S$ must be 0 and, since they cover $[0,1)$, the measure of $S$ must be positive.}
	Thus, although there are no edges between the lines of $\C R_\alpha$, we have to be careful with what we do on different lines so that the combined outcome is measurable.

	A similar argument using the \emph{generic ergodicity} of $T_{2\alpha}$ (namely, that every invariant set with the property of Baire is meager or has meager complement) shows that $\C R_\alpha$ cannot be 2-coloured with both parts in~$\C T$, see e.g.\ \Ts{Example 21.6} for a derivation.
	
	On the other hand, it is easy to show that we can properly 3-colour $\C R_\alpha$ with Borel colour classes $X_0$, $X_1$ and $X_2$.
	For example, we can let $X_2:=[0,c)$ for any $c>0$ which makes $X_2$ independent (namely, $c\le \min\{\alpha\pmod 1,1-\alpha\pmod 1\}$ is enough)
	and then colour every $x\in [0,1)\setminus X_2$ by the parity of the minimum integer $n\ge 1$ with $T_\alpha^n(x)\in X_2$. (Such $n$ exists by the irrationality of $\alpha$.) If $Y_n$ denotes the set of vertices for which the $n$-th iterate of $T_\alpha$ is the first one to hit $X_2$, then $Y_0=X_2$ and, for each $n\ge 1$, we have that $Y_n=T_\alpha^{-n}(X_2)\setminus(Y_0\cup\dots\cup Y_{n-1})$ is a finite union of half-open intervals by induction on $n$ and thus is Borel. We see that, for each of the $\sigma$-algebras $\C B$, $\C B_\mu$ and $\C T$,
	the minimum number of colours in a ``definable'' proper colouring of $\C R_\alpha$ happens to be the same, namely~$3$.

	To keep this paper of reasonable size, we will concentrate on results, where the assignments that we construct have to be Borel. Furthermore, except a few places where it is explicitly stated otherwise, we will restrict ourselves to \emph{locally finite} graphs, where every vertex has finitely many neighbours (but we usually do not require that the degrees are uniformly bounded by some constant). This is already a very rich area. Also, results of this type often form the proof basis for other settings. For example, if we want to find a proper colouring whose classes are measurable (resp.\ have the property of Baire) then one common approach is to build a Borel colouring and then argue that the set of vertices in connectivity components with at least one conflict (two adjacent vertices of the same colour)
	is null (resp.\ meager); in such situations, we are allowed to fix such components in an arbitrary fashion, e.g.\ by applying the Axiom of Choice. (Everywhere in this paper we assume that the Axiom of Choice holds.) 
	
	The following quotation of Lubotzky~\cite[Page~xi]{Lubotzky:dgegim} applies almost verbatim to this paper: \emph{``Generally speaking, I tried to write it in a form of something I wish had existed when, eight years ago, I made my first steps into these subjects without specific background in any of them.''}
	
	This paper is organised as follows. Some notation that is frequently used in the paper is collected in Section~\ref{se:Notation}. Then Section~\ref{se:Background} lists some basic facts about Borel sets. We assume that the reader is familiar with the fundamentals of topology. However, we try to carefully state all required, even rather basic results on Borel sets and functions, with references to complete proofs. Section~\ref{se:BorelGraphs} defines Borel graphs, our main object of study. 
	
	Section~\ref{se:LocallyFinite} presents some basic results for locally finite Borel graphs as follows.
	Section~\ref{se:Chi} contains the proofs of some classical results of 
	Kechris, Solecki and Todorcevic~\cite{KechrisSoleckiTodorcevic99} on Borel chromatic numbers and maximal independent sets. In Section~\ref{se:Local}, we prove that each ``locally defined'' labelling is a Borel function, a well-known and extremely useful result.
	Section~\ref{se:Smooth} presents one application of this result, namely that if one can pick exactly one vertex from each connectivity component in a Borel way then every locally checkable labelling problem 
	(\emph{LCL} for short) that is satisfiable admits a global Borel solution. Section~\ref{se:ElekLippner} present another important application namely that, for every LCL, we can carry all augmentations supported on vertex sets of size at most $r$ in a Borel way so that none remains. 
	
	The above mentioned results from Sections~\ref{se:Local}--\ref{se:ElekLippner} are frequently used and well-known to experts.
	However, detailed and accessible proofs of these results are hard to find in the literature. So, in order to fill this gap, the author carefully states and proves rather general versions of these results,  deviating from the philosophy of the rest of this paper of presenting just a simple case that conveys main ideas. The reader may prefer to skip the proofs from Sections~\ref{se:Local}--\ref{se:ElekLippner} and move to Sections~\ref{se:Marks16}--\ref{se:Equidec} that discuss more interesting results.

	Section~\ref{se:Marks16} presents
	the surprising result of Marks~\cite{Marks16} that the upper bound 
	$\Delta+1$ on the Borel chromatic number from Section~\ref{se:Chi} (that comes from an easy greedy algorithm) is in fact best possible even for acyclic graphs.

	At this point we choose to give brief pointers to some other areas, namely, Borel equivalence relations (Section~\ref{se:CBER}),  assignments with the property of Baire (Section~\ref{se:Baire}), and $\mu$-measurable assignments (Section~\ref{se:Measurable}). 
	
	With this background, however brief, we can discuss various results, in particular those that connect Borel combinatorics to measures and the property of Baire.
	Section~\ref{se:Bernshteyn20arxiv} presents the recent result of Bernshteyn~\cite{Bernshteyn20arxiv} that efficient \Local\ algorithms can be used to find Borel satisfying assignments of the corresponding LCLs. 
	Section~\ref{se:Borel+} discusses some ``purely Borel'' existence results where  measures come up in the proofs (but not in the statements).  Section~\ref{se:SubExp} discusses the class of graphs with subexponential growth for which one can prove some very general Borel results. Finally, Section~\ref{se:Equidec} presents
	some applications of descriptive combinatorics to \emph{equidecomposability} (where we try to split two given sets into congruent pieces).

	\section{Notation}
	\label{se:Notation}

	Here we collect some notation that is used in the paper. In order to help the reader to get into a logician's mindset we use some conventions from logic and set theory.
	
	We identify a non-negative integer $k$ with the set $\{0,\dots,k-1\}$, following the recursive definition of natural numbers as $0:=\emptyset$ being the empty set and $k+1:=k\cup\{k\}$. Thus $i\in k$ is a convenient shorthand for $i\in\{0,\dots,k-1\}$. The set of natural numbers is denoted by $\omega:=\{0,1,\dots\}$ and our integer indices usually start from 0. 
	
	Let $\pi_i$ denote the projection from a product $\prod_{j\in r} X_j$ of sets to its $(i+1)$-st coordinate~$X_i$.
	We identify a function $f:X\to Y$ with its graph $\{(x,f(x))\mid x\in X\}\subseteq X\times Y$. Thus the \emph{domain} and the \emph{image} of $f$ can be written respectively as $\pi_0(f)$ and $\pi_1(f)$. Also, the restriction of a function $f:X\to Y$ to $Z\subseteq X$ is $\induced{f}{Z}:=f \cap (Z\times Y)$, which is a function from $Z$ to~$Y$. For sets $X$ and $Y$, the set of all functions $X\to Y$ is denoted by $Y^X$. For functions $f_i:X\to Y_i$, $i\in 2$, let $(f_0,f_1):X\to Y_0\times Y_1$ denote the function that maps $x\in X$ to $(f_0(x),f_1(x))$.

	Let $X$ be a set.  The \emph{diagonal} $\DIAG_X$ is the set $\{(x,x)\mid x\in X\}\subseteq X^2$.
	We identify the elements of $2^X$ with the subsets of $X$, where a function $f:X\to 2$ corresponds to the preimage $f^{-1}(1)\subseteq X$. 
	A \emph{total order} on $X$ is a partial order on $X$ in which every two elements are compatible, that is, a transitive and antisymmetric subset $\preceq$ of $X^2$ such that for every $x,y\in X$ it holds that $x\preceq y$ or $y\preceq x$. The set $X$ is \emph{countable} if it admits an injective map into~$\omega$ (in particular, every finite set is countable); then its cardinality $|X|$ is the unique $k\in \omega\cup\{\omega\}$ such that there is a bijection between $X$ and~$k$.


	Let $G$ be a \emph{graph} by which we  mean a pair $(V,E)$, where $V$ is a set and $E$ is a subset of $V^2\setminus \DIAG_V$ which is \emph{symmetric}, i.e.\ for every $x,y\in V$, $(x,y)\in E$ if and only if $(y,x)\in E$. When it is convenient, we may work with unordered pairs, with $\{x,y\}\in E$ translating into $(x,y),(y,x)\in E$, etc.
	Note that we do not allow multiple edge nor loops.
	Of course, when we talk about matchings, edge colourings, etc, we mean subsets of~$E$, functions on $E$, etc, which are symmetric (in the appropriate sense). When the graph $G$ is understood, we use $V$ and $E$ by default to mean its vertex set and its edge set, and often remove the reference to $G$ from notation (except in the statements of theorems and lemmas, all of which we try to state fully).

	The usual definitions of graph theory apply. A \emph{walk} (of length $i\in\omega$) is a sequence $(x_0,\dots,x_i)\in V^{i+1}$ such that $(x_j,x_{j+1})\in E$ for every $j\in i$. A \emph{path} is a walk in which vertices do not repeat. Note that the length of a walk (or a path) refers to the number of edges.
	For $x,y\in V$, their \emph{distance} $\dist_G(x,y)$ is the shortest length of a path connecting $x$ to $y$ in $G$ (and is $\omega$ if no such path exists). A \emph{rooted graph} is a pair $(G,x)$ where $G$ is a graph and $x$ is a vertex of~$G$.

	The \emph{neighbourhood} of a set $A\subseteq V$ is 
	$$
	N_G(A):=\{y\in V\mid \exists\, x\in A\ (x,y)\in E\}.
	$$
	We abbreviate $N_G(\{x\})$ to $N_G(x)$. Note that $N_G(x)$ does not include~$x$. We denote
	the degree of $x$ by $\deg_G(x):=|N_G(x)|$. The graph $G$ is \emph{locally finite} (resp.\ \emph{locally countable}) if the neighbourhood of every vertex is finite (resp.\ countable).
	
	For $r\in\omega$, the \emph{$r$-th power $G^r$} of $G$ is the graph on the same vertex set $V$ where two distinct vertices are adjacent if they are at distance at most $r$ in~$G$. Also, the \emph{$r$-ball} around $A$,
	$$
	N^{\le r}_G(A):=N_{G^r}(A)\cup A,
	$$
	consists of those vertices of $G$ that are at distance at most $r$ from~$A$. A set of vertices $X\subseteq V$ is called \emph{$r$-sparse} if the distance between any two distinct vertices of $X$ is strictly larger than $r$ (or, equivalently, if $X$ is  an independent set in $G^r$).

	The subgraph \emph{induced} by $X\subseteq V$  in $G$ is 
	$\induced{G}{X}:=(X,E\cap X^2)$.  We call a set $X\subseteq V$ \emph{connected} if the induced subgraph $\induced{G}{X}$ is connected, that is, every two vertices of $X$ are connected by a path with all vertices in~$X$. By a \emph{(connectivity) component} of $G$ we mean a maximal connected set $X\subseteq V$.
	Let
	\beq{eq:CE}
	\C E_{G}:=\{(x,y)\in V^2\mid \dist(x,y)<\omega\},
	\eeq 
	denote the \emph{connectivity relation} of $G$. In other words, $\C E_G$ is the transitive closure of~$E\cup \DIAG_V$. Clearly, it is an equivalence relation whose classes are the components of~$G$. The \emph{saturation} of a set $A\subseteq V$ is 
	$$
	[A]_{G}:=\{x\in V\mid \dist(x,A)<\omega\},
	$$ 
	that is, the union of all components intersecting~$A$. In particular, the component of a vertex $x\in V$ is $[x]_{G}:=[\{x\}]_{G}$.

	Finally, we will need the following generalisation of Hall's marriage theorem to (infinite) graphs by Rado~\cite{Rado42}, whose proof (that relies on the Axiom of Choice) can also be found in e.g.\ \TW{Theorem C.2}.
	
	\begin{theorem}[Rado~\cite{Rado42}]
		\label{th:Rado42} Let $G$ be a bipartite locally finite graph. If $|N(X)|\ge |X|$ for every finite set $X$ inside a part then $G$ has a perfect matching.\qed
	\end{theorem}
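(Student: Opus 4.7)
The strategy is a compactness reduction (via Tychonoff) of the perfect matching problem to a finite combinatorial claim. Since $G$ is locally finite and Hall's condition applied to singletons forces $|N_G(v)|\ge 1$ for every $v$, each $N_G(v)$ is a nonempty finite set, and the product space
\[
X := \prod_{v \in V} N_G(v)
\]
is compact in the product topology. A perfect matching of $G$ corresponds precisely to a point $m \in X$ with $m(m(v)) = v$ for every $v \in V$, and for each fixed $v$ the set $\{m \in X \mid m(m(v)) = v\}$ is a finite union of basic cylinders (one for each possible value of $m(v)\in N_G(v)$) and is therefore clopen. By the finite intersection property it thus suffices to prove that for every finite $F \subseteq V$ the graph $G$ contains a finite matching saturating~$F$.

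For this finite step, let $A, B$ denote the two parts of $G$ and write $F_A := F \cap A$ and $F_B := F \cap B$. For every $X \subseteq F_A$ one has $N_G(X) \subseteq N_G(F_A)$, so the hypothesis transfers Hall's condition to the finite bipartite graph on $F_A \cup N_G(F_A)$, and the classical finite Hall theorem yields a matching $M_1 \subseteq E(G)$ saturating $F_A$; symmetrically one obtains a matching $M_2 \subseteq E(G)$ saturating $F_B$. The remaining task, and the main obstacle I anticipate, is to splice $M_1$ and $M_2$ into a single finite matching of $G$ that saturates all of $F_A \cup F_B$ simultaneously.

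To splice, observe that $M_1 \cup M_2$ has maximum degree at most two, so each of its components is an isolated edge, a simple path, or an even cycle along which the edges alternate between $M_1$ and $M_2$; on each cycle $C$ either of $M_1 \cap C$ or $M_2 \cap C$ is a perfect matching of $C$. On a path component $P = v_0 v_1 \cdots v_k$ each endpoint has degree exactly one in $M_1 \cup M_2$ and so belongs to exactly one of $V(M_1)\setminus V(M_2)$ and $V(M_2)\setminus V(M_1)$. A short parity check, using that $P$ alternates simultaneously between $M_1,M_2$ and between the bipartition classes $A,B$, shows first that an endpoint lying in $F \cap (V(M_i)\setminus V(M_{3-i}))$ must belong to the part of the bipartition that $M_i$ saturates, and second that at most one endpoint of $P$ can belong to $F$ at all (otherwise the two parity conditions on the two endpoints are incompatible with the alternation along $P$). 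It then suffices to take $M_1 \cap P$ or $M_2 \cap P$ according to whichever matching contains the edge incident to the unique $F$-endpoint, or either matching if no endpoint lies in $F$; since interior vertices of $P$ are automatically saturated by both $M_1$ and $M_2$, every vertex of $F \cap V(P)$ is then covered. The union of these chosen sub-matchings over all components is a matching in $G$ saturating $F$, completing the compactness argument.
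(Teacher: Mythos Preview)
The paper does not prove Theorem~\ref{th:Rado42}; it is stated as a background result with a \verb|\qed| and references to Rado's original paper and to Tomkowicz--Wagon, so there is no ``paper's own proof'' to compare against. Your compactness argument is correct and is essentially the standard route (and is almost certainly what the cited references do): encode matchings as involutive points of the compact product $\prod_v N_G(v)$, reduce via the finite intersection property to saturating any finite $F$, and handle the finite step by splicing two Hall matchings along the components of $M_1\cup M_2$.

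One small imprecision, not a genuine gap: your claim that every endpoint of a path component lies in exactly one of $V(M_1)\setminus V(M_2)$ and $V(M_2)\setminus V(M_1)$ fails for an isolated edge $e\in M_1\cap M_2$ (both endpoints are then in $V(M_1)\cap V(M_2)$, and both could lie in $F$). This case is of course trivial---taking $e$ itself saturates both endpoints---so the argument goes through; working with $M_1\triangle M_2$ rather than $M_1\cup M_2$ and adding $M_1\cap M_2$ back at the end would make the alternation claims hold uniformly. Note also that the first half of your ``parity check'' needs no parity: if an endpoint lies in $F$ but not in $V(M_2)$, then it is not in $F_B$ (since $M_2$ saturates $F_B$), hence lies in $F_A$; only the second half (at most one endpoint in $F$) genuinely uses the double alternation along~$P$.
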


	\section{Some Background and Standard Results}
	\label{se:Background}
	
	Here we present some basic results from analysis and descriptive set theory that we will use in this paper.
	We do not try to give any historic account
	of these results. Instead we just refer to their modern proofs (using sources which happen to be most familiar to the author).

	\subsection{Algebras and $\sigma$-Algebras}
	
	An \emph{algebra} on a set $X$ is a non-empty family $\C A\subseteq 2^X$ of subsets of $X$ which is closed under Boolean operations inside $X$ (for which it is enough to check that $A\cup B,X\setminus A\in \C A$ for every $A,B\in\C A$). In particular, the empty set and the whole set $X$ belong to~$\C A$.
	
	A  \emph{$\sigma$-algebra} on set $X$ is an algebra on $X$ which also is
	closed under countable unions. It follows that it is also closed under countable intersections.  Also, if we have a countable sequence of sets $A_0,A_1,\dots$ in a $\sigma$-algebra $\C A$, then $\liminf_{n} A_n$ (resp.\ $\limsup_{n} A_n$), the set of elements that belong to all but finitely sets $A_n$ (resp.\ belong to infinitely many sets $A_n$), is also in $\C A$. Indeed, we have, for example, that
	$$
	\textstyle\liminf_{n} A_n=\cup_{n\in\omega} \cap_{m=n}^\infty A_m,
	$$
	belongs to $\C A$ as the countable union of $\cap_{m=n}^\infty A_m\in\C A$.

	For an arbitrary family $\C F\subseteq 2^X$, let $\sigma_X(\C F)$ denote the $\sigma$-algebra on $X$ generated by $\C F$, that is, $\sigma_X(\C F)$ is the intersection of all $\sigma$-algebras on $X$ containing~$\C F$. (Note that the intersection is taken over a non-empty set since $2^X$ is an example of such a $\sigma$-algebra.)
	
	\subsection{Polish Spaces}
	
	Although the notion of a Borel set could be defined for general topological spaces, the theory becomes particularly nice and fruitful when the underlying space is Polish. Let us discuss this class of spaces first.
	
	By a \emph{topological} space we mean a pair $(X,\tau)$, where $\tau$ is a topology on a set $X$ (specifically, we view $\tau\subseteq 2^X$ as the collection of all open sets). When the topology $\tau$ on $X$ is understood, we usually just write $X$. The space $X$ is \emph{separable} if there is a countable subset $Y\subseteq X$ which is \emph{dense} (that is,
	every non-empty $U\in\tau$ intersects~$Y$). Also, $X$ is \emph{metrizable} if there is a metric $d$ which is \emph{compatible} with~$\tau$ (namely, $U\in \tau$ if and only if for every $x\in U$ there is real $r>0$ with the \emph{$r$-ball} $\{y\in X\mid d(x,y)<r\}$ lying inside~$U$). Furthermore, if $d$ can be chosen to be \emph{complete} (i.e.\ every Cauchy sequence converges to some element of $X$) then we call $X$ \emph{completely metrizable}. We call a topological space \emph{Polish} if it is separable and completely metrizable. 
	For more details on Polish spaces, we refer to, for example, Cohn~\cite[Chapter 8.1]{Cohn13mt}, Kechris~\cite[Chapter~3]{Kechris:cdst}, or Tserunyan~\cite[Part 1]{Tserunyan19idst}.
	
	This class of spaces was first extensively studied by Polish mathematicians (Sier\-pi\'n\-ski, Kuratowski, Tarski and others), hence the name. It has many nice properties, in particular being closed under many topological operations (such as passing to closed or more generally $G_\delta$ subsets, or taking various constructions like countable products or countable disjoint unions, etc). Also, Polish spaces satisfy various results crucial in descriptive set theory (such as the Baire Category Theorem). 
	So this class is the primary setting for Borel combinatorics. All topological spaces that we will consider in this survey are assumed to be Polish. As it is customary in this field, we do not fix a metric.
	
	Two basic examples of Polish spaces
	are the integers $\I Z$ (with the \emph{discrete topology} where every set is open) and the real line $\I R$ (with the usual topology, where a set is open if and only if it is a union of open intervals). Many other Polish spaces can be obtained from these; in fact, every Polish space is homeomorphic to a closed subset of $\I R^{\omega}$ (\cite[Theorem 4.17]{Kechris:cdst}).
	
	Also, it can be proved (without assuming the Continuum Hypothesis) that every uncountable Polish space has the same cardinality as e.g.\ the set of reals:

	\begin{theorem}\label{th:|Polish|} Each Polish space is either countable or has continuum many points.
	\end{theorem}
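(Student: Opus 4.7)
The plan is to prove the dichotomy by combining an easy cardinality upper bound with a Cantor--Bendixson style decomposition that yields a copy of the Cantor space whenever the space is uncountable. First I would observe the upper bound $|X|\le 2^{\aleph_0}$: every Polish space is separable and metrizable, hence second countable with a countable base $\C B\subseteq\tau$; because metric spaces are Hausdorff, each point $x\in X$ is uniquely determined by the family $\{U\in\C B\mid x\in U\}$, giving an injection $X\hookrightarrow 2^{\C B}$.

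For the lower bound I would invoke a Cantor--Bendixson decomposition. Call $x\in X$ a \emph{condensation point} if every open neighbourhood of $x$ is uncountable, and let $P$ be the set of condensation points. Using second countability, every $y\in X\setminus P$ lies in some basic open set $U_y\in\C B$ that is countable; covering $X\setminus P$ by the countably many such basic sets shows that $C:=X\setminus P$ is countable. It is routine to check that $P$ is closed and that every point of $P$ is a limit of other points of $P$ (if $x\in P$ had a neighbourhood meeting $P$ only in $\{x\}$, then that neighbourhood would be a union of $\{x\}$ and countably many basic open sets contained in $C$, contradicting $x\in P$). Thus $P$ is a closed subset of $X$ with no isolated points; being a closed subset of a Polish space it is itself Polish with a complete compatible metric $d$.

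If $P=\emptyset$ then $X=C$ is countable and we are done. Otherwise I would embed the Cantor space $2^\omega$ into $P$ by a standard Cantor scheme: recursively assign to each finite binary string $s\in 2^{<\omega}$ a non-empty open ball $B_s$ in $P$ of $d$-diameter at most $2^{-|s|}$ so that $\O{B_{s\hat{\ }0}}$ and $\O{B_{s\hat{\ }1}}$ are disjoint subsets of $B_s$. This is possible because $P$ has no isolated points, so inside any non-empty open $B_s\subseteq P$ we can find two distinct points and take small enough disjoint open balls around them. For each $\alpha\in 2^\omega$, completeness of $d$ and the shrinking diameters force $\bigcap_{n\in\omega}\O{B_{\induced{\alpha}{n}}}$ to be a single point $f(\alpha)\in P$; disjointness at each splitting makes $f$ injective. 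Hence $|P|\ge |2^\omega|=2^{\aleph_0}$, and combined with the upper bound this gives $|X|=2^{\aleph_0}$.

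The only genuinely delicate step is the Cantor--Bendixson decomposition, specifically showing that the condensation-point kernel $P$ really is perfect and that its complement is countable; the Cantor scheme and the cardinality upper bound are then straightforward once a complete compatible metric on $P$ is fixed.
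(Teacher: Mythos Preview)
Your proof is correct and follows the standard Cantor--Bendixson route: bound $|X|\le 2^{\aleph_0}$ via second countability, split $X$ into the perfect kernel $P$ of condensation points and a countable remainder, and then run a Cantor scheme inside the non-empty perfect Polish space $P$ to embed $2^\omega$. All the steps you flag (countability of $X\setminus P$, perfectness of $P$, the scheme using completeness and shrinking diameters) are handled correctly.

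The paper itself does not give a proof of this theorem at all: it simply cites \Ke{Corollary~6.5} and \Ts{Corollary~4.6}. Your argument is precisely the one underlying those references (Cantor--Bendixson plus the perfect-set embedding of $2^\omega$), so there is no substantive difference in approach --- you have just written out what the paper delegates to the literature.
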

	
	\begin{proof} See e.g.\ \Ke{Corollary~6.5} or \Ts{Corollary~4.6}.\end{proof}

	\subsection{The Borel $\sigma$-Algebra of a Polish Space}
	
	Let $X=(X,\tau)$ be a Polish space.
	Its \emph{Borel $\sigma$-algebra} is $\C B(X,\tau):=\sigma_X(\tau)$, the $\sigma$-algebra on $X$ generated by all open sets.
	When the meaning is clear, we will usually just write $\C B(X)$ or $\C B$ instead of $\C B(X,\tau)$.

	The definition of the Borel $\sigma$-algebra behaves well with respect to various operations on Polish spaces. For example, the Borel $\sigma$-algebra of the product space $X\times Y$ is equal to the product of $\sigma$-algebras $\C B(X)\times\C B(Y)$:
	\beq{eq:BProduct}
	\C B(X\times Y)=\C B(X)\times \C B(Y),
	\eeq
	see e.g.~\cite[Proposition~8.1.7]{Cohn13mt} (and \Co{Section 5.1} for an introduction to products of $\sigma$-algebras).

	A function $f:X\to Y$ is \emph{Borel (measurable)} if the preimage of every Borel subset of $Y$ is a Borel subset of $X$. It is easy to see (\Co{Proposition 2.6.2}) that it is enough to check this condition for any family $\C A\subseteq \C B(Y)$ that generates $\C B(Y)$, e.g.\ just for open sets.
	Since we identify the function $f$ with its graph $\{(x,f(x))\mid x\in X\}\subseteq X\times Y$, the following result justifies why it is more common to call a Borel measurable function just ``a Borel function''.
	
	\begin{lemma}\label{lm:BorelF} A function $f:X\to Y$ between Polish spaces is Borel measurable if and only if it is
		a Borel subset of~$X\times Y$.\end{lemma}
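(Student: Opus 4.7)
\emph{Proof plan.} I would prove the two implications separately. The forward direction is a short computation built around the diagonal of~$Y$, while the reverse direction is the more delicate one and will rest on a standard descriptive-set-theoretic tool.

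For the forward direction, suppose $f$ is Borel measurable, and consider the auxiliary map $g\colon X\times Y\to Y\times Y$ given by $g(x,y):=(f(x),y)$. By~(\ref{eq:BProduct}) and the easy fact that a map into a product is Borel iff each coordinate is, it suffices to verify Borel measurability coordinate by coordinate: the first coordinate is the composition of the continuous projection $\pi_0$ with the Borel function $f$, and the second coordinate is the continuous projection $\pi_1$. Since $Y$ is metrizable, hence Hausdorff, the diagonal $\DIAG_Y$ is closed in $Y\times Y$ and thus Borel. The graph of $f$ is exactly $g^{-1}(\DIAG_Y)$, so it is Borel in $X\times Y$.

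For the reverse direction, suppose the graph $f\subseteq X\times Y$ is Borel. To show that $f$ is Borel measurable, it is enough to show that $f^{-1}(V)$ is Borel for every open $V\subseteq Y$. Observe that
\[
f^{-1}(V) \;=\; \pi_0\bigl(f\cap (X\times V)\bigr),
\]
and that, since $f$ is (the graph of) a function, this projection is injective on the Borel set $f\cap (X\times V)$. The main obstacle is that projections of Borel sets are in general only analytic and need not be Borel, so I would invoke the Lusin--Suslin theorem from descriptive set theory, which asserts that an injective Borel image of a Borel subset of a Polish space (inside another Polish space) is itself Borel. Applied to the continuous injection obtained by restricting $\pi_0$ to $f\cap(X\times V)$, this yields that $f^{-1}(V)$ is Borel, and the equivalence follows.

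The main technical step is thus the invocation of Lusin--Suslin in the reverse direction; the only feature of the situation that makes it applicable is the injectivity of the projection on the graph, which is exactly what distinguishes functional relations from general Borel subsets of $X\times Y$. If one preferred to avoid citing Lusin--Suslin directly, one could instead observe that both $f^{-1}(V)$ and its complement $f^{-1}(Y\setminus V)$ are projections of Borel sets under an injective map and are therefore both analytic, and then conclude by Suslin's theorem that each is Borel; but this is the same circle of ideas.
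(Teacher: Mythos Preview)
Your argument is correct and is the standard proof. The paper itself does not give a proof but simply refers the reader to \cite[Proposition 8.3.4]{Cohn13mt}, so there is nothing to compare against beyond noting that your write-up is essentially what one finds in such references. One caveat about the internal logic of this particular paper: the general form of Theorem~\ref{th:LNU} is \emph{derived} using Lemma~\ref{lm:BorelF}, so invoking Theorem~\ref{th:LNU} here would be circular; your direct appeal to Lusin--Suslin (or, equivalently, to the projection case of Lusin--Novikov, which the paper cites externally) avoids this.
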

	\begin{proof} See e.g.\ \cite[Proposition 8.3.4]{Cohn13mt}.\end{proof}

	Although there are many non-equivalent choices of a Polish topology $\tau$ on an infinite countable set $X$, each of them has $2^X$ as its Borel $\sigma$-algebra. Indeed, every subset $Y$ of $X$, as the countable union $Y=\cup_{y\in Y} \{y\}$ of closed sets, is Borel.
	A remarkable generalisation of this trivial observation is the following.
	
	\begin{theorem}[Borel Isomorphism Theorem]
		\label{th:BIT}
		Every two Polish spaces $X,Y$ of the same cardinality are \emph{Borel isomorphic}, that is, there is a bijection $f:X\to Y$ such that the functions $f$ and $f^{-1}$ are Borel.
		(In fact, by Theorem~\ref{th:LNU}, it is enough to require that just one of the bijections $f$ or $f^{-1}$ is Borel.)
	\end{theorem}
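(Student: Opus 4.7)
My plan is to split via Theorem \ref{th:|Polish|} into the countable and uncountable cases. For the countable case: in any countable Polish space singletons are closed, so every subset is Borel as a countable union of singletons, and hence any bijection between two countable Polish spaces of equal cardinality is automatically a Borel isomorphism. It thus suffices to handle the uncountable case, and by transitivity of the Borel isomorphism relation it suffices to prove that every uncountable Polish space $X$ is Borel isomorphic to a fixed benchmark, which I take to be the Cantor space $2^\omega$.

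For this I would produce two Borel injections and combine them via a Borel Schr\"oder--Bernstein argument. In one direction, a Borel injection $X\hookrightarrow 2^\omega$ is obtained by embedding $X$ as a closed subset of $\I R^\omega$ (using \cite[Theorem~4.17]{Kechris:cdst} cited in the text), Borel-injecting $\I R$ into $2^\omega$ via binary expansion (choosing a fixed Borel resolution of the countable set of dyadic ambiguities), and then Borel-coding $\omega$-tuples from $2^\omega$ into a single element of $2^\omega$. In the opposite direction, since $X$ is uncountable Polish the Cantor--Bendixson analysis provides a nonempty perfect closed subset; inside it I would inductively build a Cantor scheme, i.e.\ nonempty closed sets $C_s$ indexed by $s\in 2^{<\omega}$ whose two children $C_{s\smallfrown 0}, C_{s\smallfrown 1}$ are disjoint subsets of $C_s$ and whose diameters tend to $0$. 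The map sending $\sigma\in 2^\omega$ to the unique point of $\bigcap_{n\in\omega} C_{\induced{\sigma}{n}}$ is then a continuous, hence Borel, injection $2^\omega\hookrightarrow X$.

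With Borel injections $\varphi:X\to 2^\omega$ and $\psi:2^\omega\to X$ in hand, I would run the classical Schr\"oder--Bernstein construction: set $A_0:=X\setminus\psi(2^\omega)$, recursively $A_{n+1}:=\psi(\varphi(A_n))$, $A:=\bigcup_{n\in\omega} A_n$, and define $f$ to equal $\varphi$ on $A$ and $\psi^{-1}$ on $X\setminus A$. The main obstacle, and the only genuinely nontrivial point, is verifying that each $A_n$ is Borel. This requires the fact that the image of a Borel set under a Borel \emph{injection} between Polish spaces is again Borel (the Luzin--Souslin theorem, which I expect underlies the forthcoming Theorem \ref{th:LNU}); injectivity is essential, since under an arbitrary Borel map a Borel image is only analytic in general. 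Granting this, each $A_n$ and hence $A$ is Borel, the piecewise map $f$ is a Borel bijection, and its inverse $f^{-1}$ is Borel by swapping coordinates in the graph of $f$ and applying Lemma \ref{lm:BorelF}, delivering the required Borel isomorphism.
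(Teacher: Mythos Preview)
Your sketch is correct and follows the standard route to the Borel Isomorphism Theorem. However, the paper does not actually prove this result: its ``proof'' consists solely of citations to \cite[Theorem 8.3.6]{Cohn13mt} and \cite[Theorem 13.10]{Tserunyan19idst}. What you have outlined --- reduce to the uncountable case, produce Borel injections in both directions with $2^\omega$, and run a Borel Schr\"oder--Bernstein using the fact that injective Borel images are Borel --- is essentially the argument those references contain, so there is no substantive divergence to compare.

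Two minor remarks. First, your detour through $\I R^\omega$ for the injection $X\hookrightarrow 2^\omega$ is unnecessary: the paper's own Lemma~\ref{lm:I} already supplies a Borel injection $X\to 2^\omega$ directly from any countable generating family. Second, you correctly identify that the Schr\"oder--Bernstein step hinges on the Luzin--Souslin theorem (injective Borel images are Borel), which is indeed the one-to-one special case of the paper's Theorem~\ref{th:LNU}; since both Theorems~\ref{th:BIT} and~\ref{th:LNU} are externally sourced in this paper, invoking one in the proof of the other creates no circularity within the text.
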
 
	\begin{proof} See e.g.\  \cite[Theorem 8.3.6]{Cohn13mt} or \cite[Theorem 13.10]{Tserunyan19idst}.\end{proof}

	One crucial property of the Borel $\sigma$-algebra is that it can be generated by a countable family of sets.
	
	\begin{lemma}\label{lm:J}
		For every Polish space $X$, there is a countable family $\C J=\{J_n\mid n\in\omega\}$ with $\C B(X)=\sigma_X(\C J)$. \end{lemma}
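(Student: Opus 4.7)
The plan is to use the fact that Polish spaces are second-countable, and then observe that a countable base for the topology already generates the Borel $\sigma$-algebra.

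First, I would produce the candidate family $\mathcal{J}$. Since $X$ is Polish, fix a compatible complete metric $d$ and a countable dense set $D\subseteq X$ (which exists by separability). Then the family
$$
\mathcal{J} := \bigl\{ B(x, 1/n) \mid x\in D,\ n\in\omega,\ n\ge 1\bigr\},
$$
where $B(x,r)=\{y\in X\mid d(x,y)<r\}$, is countable, being indexed by the countable set $D\times(\omega\setminus\{0\})$. I would then re-index it as $\{J_n\mid n\in\omega\}$.

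Next, I would verify $\mathcal{B}(X)=\sigma_X(\mathcal{J})$ by showing the two inclusions. For $\sigma_X(\mathcal{J})\subseteq \mathcal{B}(X)$: each $J_n$ is open, hence lies in $\tau\subseteq \mathcal{B}(X)$, and since $\mathcal{B}(X)$ is itself a $\sigma$-algebra containing $\mathcal{J}$, it must contain the smallest one with this property, namely $\sigma_X(\mathcal{J})$. For the converse, since $\mathcal{B}(X)=\sigma_X(\tau)$, it suffices to show $\tau\subseteq \sigma_X(\mathcal{J})$. Given any open $U\in\tau$, a standard check shows
$$
U = \bigcup\{ J\in \mathcal{J} \mid J\subseteq U\},
$$
because for every $y\in U$ there is some $n\ge 1$ with $B(y,2/n)\subseteq U$, and by density of $D$ we can pick $x\in D$ with $d(x,y)<1/n$, giving $y\in B(x,1/n)\subseteq B(y,2/n)\subseteq U$. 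The right-hand side is a union of a subfamily of the countable set $\mathcal{J}$, hence a countable union of members of $\mathcal{J}$, and so belongs to $\sigma_X(\mathcal{J})$. Therefore $\tau\subseteq\sigma_X(\mathcal{J})$, which gives $\mathcal{B}(X)\subseteq \sigma_X(\mathcal{J})$.

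There is no real obstacle here: the entire content is that Polish (in fact any separable metrizable) spaces are second-countable, after which the argument is just the observation that a countable base for $\tau$ automatically generates $\sigma_X(\tau)=\mathcal{B}(X)$. The only minor care needed is making the density-plus-triangle-inequality argument precise when expressing an arbitrary open set as a countable union of basic open balls; this is the step I singled out in the display above.
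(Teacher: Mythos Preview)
Your proof is correct and follows essentially the same approach as the paper: both fix a compatible metric and a countable dense set, take a countable family of open balls (the paper uses rational radii, you use radii $1/n$), and argue that every open set is a countable union from this family. Your version is in fact more detailed, spelling out the triangle-inequality step that the paper leaves implicit.
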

	\begin{proof} Fix a countable dense subset $Y\subseteq X$ and a compatible metric $d$ on $X$. Let $\C J$ consist of all open balls in the metric $d$ around points of $Y$ with rational radii. Of course, $\C J\subseteq \C B$. Every open set is a union of elements of~$\C J$ and this union is countable as $\C J$ is countable. Thus $\sigma_X(\C J)$ contains all open sets and, therefore, has to be equal to~$\C B$.\end{proof}

	\begin{remark}\label{rm:GenAlg}
		We can additionally require in Lemma~\ref{lm:J} that $\C J$ is an algebra on~$X$. Indeed, given any generating countable family $\C J\subseteq \C B$, we can enlarge it by adding all Boolean combinations of its elements. This does not affect the equality $\sigma_X(\C J)=\C B$ (as all added sets are in $\C B$) and keeps the family countable.\ecpf
	\end{remark}
	
	For example, for $X:=\I R$, $d(x,y):=|x-y|$, and $Y:=\I Q$, our proof of Lemma~\ref{lm:J} gives the family of open intervals with rational endpoints. If we want to have a generating algebra for $\C B(\I R)$ as in Remark~\ref{rm:GenAlg}, it may be more convenient to use half-open intervals in the first step, namely to take
	$$
	\C J:=\{[a,b)\mid a,b\in\I Q\} \cup \{[a,\infty)\mid a\in \I Q\}\cup \{(-\infty,b)\mid b\in\I Q\}.
	$$
	Then the algebra generated by $\C J$ will precisely consists of finite unions of disjoint intervals in~$\C J$.

	Note that, for every distinct $x,y\in X$, there is an open (and thus Borel) set $U$ that \emph{separates} $x$ from $y$, that is,  $U$ contains $x$ but not $y$ (e.g.\ an open ball around $x$ of radius $d(x,y)/2$ for some compatible metric~$d$). Thus Borel sets separate points. In fact,  we have the following stronger property.
	
	\begin{lemma}\label{lm:Sep} Let $X$ be a Polish space and let $\C J$ be an algebra on $X$ that generates~$\C B(X)$. Then for every pair of disjoint finite sets $A,B\subseteq X$ there is $J\in\C J$ with $A\subseteq J$ and $B\cap J=\emptyset$.\end{lemma}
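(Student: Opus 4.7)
The plan is to reduce the problem to studying the trace $C \cap F$ on the finite set $F := A \cup B$, exploiting finiteness of $F$ to bypass the gap between an algebra and a $\sigma$-algebra. Concretely, I would define
$$
\C A := \{C \subseteq X \mid \exists\, J\in \C J\text{ with } J \cap F = C \cap F\}
$$
and show that $\C A$ is a $\sigma$-algebra on $X$ containing $\C J$, which forces $\C A \supseteq \sigma_X(\C J) = \C B(X)$ by the hypothesis on $\C J$. Once this is in place, observe that there is an \emph{open} (hence Borel) set $U$ with $A \subseteq U$ and $B \cap U = \emptyset$: for any compatible metric $d$, one can take $U := \bigcup_{a\in A}\bigcap_{b\in B}\{z\in X \mid d(z,a) < d(a,b)\}$. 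Since $U \in \C B(X) \subseteq \C A$, a witnessing $J\in\C J$ satisfies $J\cap F = U\cap F = A$, meaning $A\subseteq J$ and $B\cap J=\emptyset$, as required.

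The content of the argument sits in verifying that $\C A$ is a $\sigma$-algebra. Closure under complement is immediate from $(X\setminus J)\cap F = F\setminus(J\cap F) = F\setminus(C\cap F) = (X\setminus C)\cap F$ together with $X\setminus J\in\C J$. For closure under countable unions, let $C_n \in \C A$ and $C := \bigcup_n C_n$; the finiteness of $F$ is what saves us. Indeed $C\cap F = \bigcup_n (C_n\cap F)$ is a subset of the finite set $F$, so there is a \emph{finite} subset $G\subseteq\omega$ with $C\cap F = \bigcup_{n\in G}(C_n\cap F)$. Pick $J_n\in\C J$ witnessing $C_n$ for each $n\in G$ and set $J := \bigcup_{n\in G} J_n$; this lies in $\C J$ as a finite union inside an algebra, and $J\cap F = \bigcup_{n\in G}(J_n\cap F) = \bigcup_{n\in G}(C_n\cap F) = C\cap F$, so $J$ witnesses $C\in\C A$.

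The main obstacle, modest as it is, is precisely this last step: recognising that since $\C J$ is only assumed to be an algebra, countable unions inside $\C A$ must be reduced to finite ones before invoking the closure property of $\C J$, and that the finiteness of $A\cup B$ is exactly what makes this possible. Once this observation is made, the rest of the argument is bookkeeping.
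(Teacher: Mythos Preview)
Your proof is correct. The trace family $\C A$ is indeed a $\sigma$-algebra (the key point being that finiteness of $F$ lets you collapse countable unions to finite ones before invoking closure of $\C J$), and the rest follows as you say.

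The paper takes a more hands-on route: it first observes that $\C J$ must separate any two distinct points $a,b$ (since the family of sets containing both or neither of $a,b$ is a $\sigma$-algebra, so if $\C J$ failed to separate them then so would $\sigma_X(\C J)=\C B(X)$, which is absurd), and then explicitly builds $J=\bigcup_{a\in A}\bigcap_{b\in B} J_{a,b}$ from pairwise separators $J_{a,b}\in\C J$. Your argument absorbs both the separation step and the aggregation step into a single $\sigma$-algebra argument on the full trace over $F=A\cup B$, and then reads off the answer from any Borel set with the right trace. The paper's version is slightly more constructive (it gives an explicit formula for $J$) and avoids ever naming a compatible metric; yours is cleaner in that it does one closure argument rather than a pairwise one plus bookkeeping. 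Both rest on the same underlying observation: on a finite set, the trace of $\sigma_X(\C J)$ cannot exceed the trace of~$\C J$.
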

	
	\begin{proof} 
		For distinct $a,b\in X$, there is some $J_{a,b}\in\C J$ that contains exactly one of $a$ and~$b$ (as otherwise $\C B(X)=\sigma_X(\C J)$ cannot separate $a$ from $b$) and, by passing to the complement if necessary, we can assume that $J_{a,b}$ separates $a$ from $b$.
		For $a\in A$, the set $J_a:=\cap_{b\in B}J_{a,b}$ belongs to $\C J$, contains $a$ and is disjoint from $B$. Thus $J:=\cup_{a\in A} J_a$ satisfies the claim.\end{proof}

	The \emph{lexicographic order} $\le_\lex$ on $2^{\omega}$ is defined so that $(x_i)_{i\in\omega}$ comes before $(y_i)_{i\in\omega}$ if 
	$x_i<y_i$
	where $i\in\omega$ is the smallest index with $x_i\not= y_i$.
	
	\begin{lemma}\label{lm:I} For every Polish space $X$ there is a Borel injective map
		$I:X\to 2^{\omega}$. Furthermore, given any such map $I$, if we define $\preceq$ to consist of those pairs $(x,y)\in X^2$ with $I(x)\le_\lex I(y)$, then we obtain a total order on~$X$ which is Borel (as a subset of $X^2$).
	\end{lemma}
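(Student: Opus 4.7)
The plan is to build $I$ by encoding each point via its characteristic function with respect to a countable generating family for $\C B(X)$. By Lemma~\ref{lm:J}, fix a countable family $\C J=\{J_n\mid n\in\omega\}$ with $\sigma_X(\C J)=\C B(X)$, and define
$$
I(x):=\bigl(\I{1}_{J_n}(x)\bigr)_{n\in\omega}\in 2^\omega.
$$
This is the natural ``generator-indexed fingerprint'' of $x$.

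For injectivity, suppose $x\ne y$ in $X$. The family $\C A:=\{A\in\C B(X)\mid \I{1}_A(x)=\I{1}_A(y)\}$ is a $\sigma$-algebra on $X$; if it contained $\C J$ then it would contain $\sigma_X(\C J)=\C B(X)$, which is impossible since $X$ is Hausdorff and so some open (hence Borel) set separates $x$ from $y$. Hence there is some $J_n\not\in\C A$, giving $I(x)_n\ne I(y)_n$. For Borelness, note that the Borel $\sigma$-algebra of $2^\omega$ is generated by the basic cylinders $C_n:=\{s\in 2^\omega\mid s_n=1\}$, and $I^{-1}(C_n)=J_n\in\C B(X)$; by the remark after Lemma~\ref{lm:BorelF} this suffices to make $I$ Borel measurable, hence a Borel subset of $X\times 2^\omega$ by Lemma~\ref{lm:BorelF}.

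For the order, observe first that $\le_\lex$ is a total order on $2^\omega$ in the sense of the paper (reflexivity follows from the alternative ``or equal'' clause, transitivity and antisymmetry are routine, and totality is the definition). Pulling back by the injection $I$ preserves all four properties, so $\preceq$ is a total order on $X$. It remains to check that $\preceq\subseteq X^2$ is Borel. This I would do by writing
$$
\le_\lex\ =\ \DIAG_{2^\omega}\ \cup\ \bigcup_{n\in\omega}\bigl\{(s,t)\in (2^\omega)^2\mid s_i=t_i\text{ for all } i<n,\ s_n=0,\ t_n=1\bigr\},
$$
which exhibits $\le_\lex$ as a countable union of clopen sets together with the closed set $\DIAG_{2^\omega}$, hence Borel in $(2^\omega)^2$. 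By~\eqref{eq:BProduct} together with the Borelness of each coordinate function, the map $X^2\to(2^\omega)^2$ sending $(x,y)\mapsto (I(x),I(y))$ is Borel, so $\preceq$, as its preimage of $\le_\lex$, is Borel in $X^2$.

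None of the steps is a serious obstacle; the only point requiring mild care is the injectivity argument, where one must invoke Hausdorff separation of Polish spaces (rather than relying on $\C J$ being an algebra, which would let one appeal to Lemma~\ref{lm:Sep} directly). The rest is an application of standard facts about countable generators, product $\sigma$-algebras, and the fact that membership in a fixed coordinate on $2^\omega$ is clopen.
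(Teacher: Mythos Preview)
Your proof is correct and follows essentially the same approach as the paper's: define $I$ via the indicator functions of a countable generating family for $\C B(X)$, check Borelness on cylinders, and pull back $\le_\lex$ along the Borel map $(x,y)\mapsto(I(x),I(y))$. The only cosmetic differences are that the paper invokes Lemma~\ref{lm:Sep} for injectivity (implicitly taking $\C J$ to be an algebra via Remark~\ref{rm:GenAlg}) rather than your $\sigma$-algebra argument, and it shows $\le_\lex$ is closed rather than exhibiting it as a countable union of clopen sets together with the diagonal.
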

	\begin{proof} By Lemma~\ref{lm:J}, fix  a countable family $\C J$ that generates $\C B$ and define
		\beq{eq:I}
		I(x):=(\I 1_{J_0}(x),\I 1_{J_1}(x),\dots),\quad x\in X,
		\eeq
		where the \emph{indicator function} $\I 1_J$ of $J\subseteq X$ assumes value 1 on $J$ and 0 on~$X\setminus J$. In other words, $I(x)$ records which sets $J_i$ contain~$x$. The Borel $\sigma$-algebra on the product space $2^{\omega}$ is generated by the sets of the form $Y_{i,\sigma}:=\{x\in 2^{\omega}: x_i=\sigma\}$ for $i\in\omega$ and $\sigma\in 2$ (as these form a pre-base for
		the product topology on $2^{\omega}$). Since $I^{-1}(Y_{i,1})=J_i$ and $I^{-1}(Y_{i,0})=X\setminus J_i$ are in $\C B$, the map $I$ is Borel. Also,  $I$ is injective by Lemma~\ref{lm:Sep}.
		
		By the injectivity of $I$, we have that $\preceq$ is a total order on $X$. Also, the lexicographic order $\le_{\lex}$ is a closed (and thus Borel) subset of $(2^{\omega})^2$. Indeed, if $x\not\le_{\lex} y$ then,
		with $i\in\omega$ being the maximum index such that $(x_0,\dots,x_{i-i})=(y_0,\dots,y_{i-1})$,  every pair $(x',y')$ that coincides with $(x,y)$ on the first $i+1$ indices (an open set of pairs) is not in $\le_{\lex}$. Thus $\preceq$, as the preimage under the Borel map
		$I\times I: X\times X\to 2^{\omega}\times 2^{\omega}$ that sends $(x,y)$ to $(I(x),I(y))$,
		is a Borel subset of $X^2$. (Our claim that the map $I\times I$ is Borel can be easily derived from~\eqref{eq:BProduct}.)\end{proof}

	If $X$ is a Borel subset of $[0,1)$ then another, more natural, choice of $I$  in Lemma~\ref{lm:I} is to map $x\in[0,1)$ to the digits of its binary expansion where we do not allow infinite sequences of trailing~$1$'s. Then $\preceq$ becomes just the standard order on~$[0,1)$.

	While continuous images of Borel sets are not in general Borel, this is true all for countable-to-one Borel maps for which, moreover, a Borel right inverse exists (with the latter property called \emph{uniformization} in descriptive set theory).
	
	\begin{theorem}[Lusin-Novikov Uniformization Theorem]\label{th:LNU} Let $X,Y$ be Polish spaces and let $f:X\to Y$ be a Borel map.
		Let $A\subseteq X$ be a Borel set such that every $y\in Y$ has countably many preimages in $A$ under~$f$. 
		Then  $f(A)$ is a Borel subset of $Y$. Moreover, there are countably many Borel maps $g_n: f(A)\to A$, $n\in\omega$, each being a right inverse to $f$ (i.e.\ the composition $f\circ g_n$ is the identity function on $f(A)$), such that for every $x\in A$ there is $n\in \omega$ with $g_n(f(x))=x$.\end{theorem}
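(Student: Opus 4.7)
The plan is to reduce the problem to the case where $f|_A$ is injective by partitioning $A$ into countably many Borel pieces $A_n$ on each of which $f$ acts injectively, and then to invoke the classical Lusin--Souslin theorem (an injective Borel image of a Borel set is Borel, with Borel inverse) on each piece. I would cite Lusin--Souslin as a black box from descriptive set theory, since its proof via Lusin's separation theorem for analytic sets is essentially orthogonal to the present argument.

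For the partition, fix a countable algebra $\C J=\{J_0,J_1,\ldots\}$ generating $\C B(X)$ (Lemma~\ref{lm:J} plus Remark~\ref{rm:GenAlg}) and tentatively set
$$A_n := \bigl\{x\in A\cap J_n : f^{-1}(f(x))\cap A\cap J_n=\{x\}\bigr\}\setminus(A_0\cup\dots\cup A_{n-1}),$$
so that $f$ is injective on each $A_n$ by construction. The covering claim $A=\bigcup_n A_n$ asserts that for every $x\in A$ there is some $n$ with $x\in J_n$ while the countably many other preimages $F:=f^{-1}(f(x))\cap A\setminus\{x\}$ all lie outside $J_n$. Lemma~\ref{lm:Sep} supplies such separators only for \emph{finite} subsets of $F$; bridging the gap from finite to countable is the crux of the argument and is where countability of the fibers is genuinely used. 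Simultaneously, one must verify that each $A_n$ is Borel — equivalently, that $\{y\in Y : |f^{-1}(y)\cap A\cap J_n|\ge 2\}$ is Borel — which once more relies on applying Lusin--Souslin to $f$ restricted to $A\cap J_n$.

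With the partition in hand, Lusin--Souslin yields that each $f(A_n)$ is Borel and $(f|_{A_n})^{-1}:f(A_n)\to A_n$ is a Borel function; hence $f(A)=\bigcup_n f(A_n)$ is Borel as a countable union. To build the total right inverses, first define a Borel selector $g_0:f(A)\to A$ by setting $g_0(y):=(f|_{A_m})^{-1}(y)$ where $m$ is the least index with $y\in f(A_m)$; such $m$ exists because $\bigcup_n f(A_n)=f(A)$, and $g_0$ is Borel because $f(A)$ decomposes as a countable Borel partition $\bigsqcup_n\bigl(f(A_n)\setminus\bigcup_{k<n}f(A_k)\bigr)$ on which $g_0$ is piecewise Borel. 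For $n\ge 1$, set $g_n(y):=(f|_{A_n})^{-1}(y)$ when $y\in f(A_n)$ and $g_n(y):=g_0(y)$ otherwise. Each $g_n$ is then a Borel right inverse to $f$, and for any $x\in A$ the unique $n$ with $x\in A_n$ satisfies $g_n(f(x))=x$, as required. The main obstacle throughout is the partition step: simultaneously upgrading pointwise separation to separation of $x$ from a whole countable fiber by a single member of the generating family, and confirming that the resulting pieces $A_n$ are genuinely Borel rather than merely analytic. Both concerns are precisely what Lusin--Souslin is designed to resolve.
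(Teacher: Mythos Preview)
Your approach differs substantially from the paper's, which does not attempt a self-contained argument at all: it cites the standard product version (where $X=Z\times Y$ and $f=\pi_1$ is projection) as a black box from Kechris or Tserunyan, and then reduces the general statement to that case by passing to the graph $A':=(A\times Y)\cap f\subseteq X\times Y$, noting $f(A)=\pi_1(A')$ and composing the right inverses for $\pi_1$ with~$\pi_0$.

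Your direct attempt has a genuine gap at the covering step, and you have correctly identified it but not actually resolved it. The claim that every $x\in A$ lies in some $J_n$ disjoint from the remaining countable fiber $F=f^{-1}(f(x))\cap A\setminus\{x\}$ is simply false for a fixed countable generating algebra. Take $X=[0,1)$, let $\C J$ be the algebra generated by half-open intervals with rational endpoints, let $x=0$, and let $F=\{1/k:k\ge 2\}$. Every $J\in\C J$ containing $0$ contains an interval $[0,r)$ with $r>0$, hence meets~$F$. So $\bigcup_n A_n$ can miss points of $A$ entirely. Lusin--Souslin says nothing about separating a point from a countable set by a member of a prescribed countable family; it is a statement about injective Borel images, and invoking it here does not bridge the gap.

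The second concern you raise is also not handled by Lusin--Souslin. The set $\{y:|f^{-1}(y)\cap A\cap J_n|\ge 2\}$ is visibly analytic, but showing it is Borel under the countable-fiber hypothesis is essentially equivalent in difficulty to the theorem you are trying to prove; it is not a consequence of Lusin--Souslin applied to $f$ restricted to $A\cap J_n$, since that restriction is not injective. The standard proofs that a Borel set with countable vertical sections decomposes into countably many Borel graphs do proceed along the lines you sketch, but the actual construction of the pieces requires heavier machinery (transfinite induction on Borel rank, or iterated applications of Lusin separation to analytic sets), not a one-shot use of a generating algebra. If you want to cite a black box, the cleanest choice is exactly the product version the paper uses, and then the reduction via the graph of $f$ is a two-line exercise.
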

	
	\begin{proof} 
		This theorem, in the case when $X=Z\times Y$ for some Polish $Z$ and $f=\pi_1$ is the projection on $Y$,  can be found in e.g.\ \Ke{Theorem 18.10} or~\Ts{Theorem 13.6}. To derive the version stated here,
		let $A':=(A\times Y)\cap \graph{f}$. (Recall that we view the function $f:X\to Y$ as a subset $f\subseteq X\times Y$.) Then $A'\subseteq X\times Y$ is Borel by Lemma~\ref{lm:BorelF}, $f(A)=\pi_1(A')$ is Borel by the above product version  
		while the sequence of Borel right inverses $g_n':f(A)\to A'$ for $\pi_1$
		gives the required functions $g_n:=\pi_0\circ g_n'$.\end{proof} 
	
	One can view the second part of Theorem~\ref{th:LNU}
	as  a ``Borel version'' of the Axiom of Choice. If, say, $A=X$ and $f$ is surjective,  then finding one right inverse $g_0$ of $f$ amounts to picking exactly one element from each of the sets $X_y:=f^{-1}(y)$ indexed by $y\in Y$. Thus Theorem~\ref{th:LNU} gives that, if all dependences are ``Borel'' and each set $X_y$ is countable, then a Borel choice is possible. This result was generalised to the case where each $X_y$ is required to be only \emph{$\sigma$-compact} (a countable union of compact sets), see e.g.\ \Ke{Theorem 35.46.ii}.

	\subsection{Standard Borel Spaces}\label{se:SBS}

	A  \emph{standard Borel space} is a pair $(X,\C A)$ where $X$ is a set and there is a choice of a Polish topology $\tau$ on $X$ such that $\C A$ is equal to $\sigma_X(\tau)$, the Borel $\sigma$-algebra of~$(X,\tau)$. For a detailed treatment of standard Borel spaces, see e.g.\ \Ke{Chapter 12} or \cite[Chapter~3]{Srivastava98cbs}.

	We will denote the Borel $\sigma$-algebra on a standard Borel space $X$ by $\C B(X)$ or $\C B$, also abbreviating $(X,\C B)$ to~$X$. It is customary  not to fix a Polish topology $\tau$ on $X$ (which, strictly speaking,
	requires checking that various operations defined on standard Borel spaces do not depend on the choice of topologies).
	
	By the Borel Isomorphism Theorem (Theorem~\ref{th:BIT}) combined with Theorem~\ref{th:|Polish|}, every standard Borel space either consists of all subsets of a countable set $X$ or admits a Borel isomorphism into e.g.\ the interval $[0,1]$ of reals. Since this survey concentrates on the Borel structure, we could have, in principle, restricted ourselves to just these special Polish spaces. However, many constructions and proofs are much more natural and intuitive when written in terms of general Polish spaces.

	A useful property that is often used without special mention is that
	every Borel subset induces a standard Borel space. This follows from the following result.
	
	\begin{lemma}\label{lm:SubB} Let $(X,\tau)$ be a Polish space. Then for every $Y\in\C B(X,\tau)$ there is a Polish topology $\tau'\supseteq \tau$ on $X$ such that $Y$ is a closed set with respect to~$\tau'$ and  $\C B(X,\tau)=\C B(X,\tau')$.
	\end{lemma}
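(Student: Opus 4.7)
The plan is to prove a slightly stronger statement: one can refine $\tau$ to make $Y$ \emph{clopen} (which a fortiori is closed). Let $\C F$ denote the family of $Y \subseteq X$ that admit some Polish refinement $\tau' \supseteq \tau$ with $Y$ clopen in $\tau'$ and $\C B(X,\tau') = \C B(X,\tau)$. Trivially $\C F$ is closed under complements (the same $\tau'$ witnesses both $Y$ and $X\setminus Y$). So it suffices to show that $\C F$ contains every $\tau$-closed set and is closed under countable unions; then $\C F$ is a $\sigma$-algebra containing the $\tau$-closed sets, hence contains $\C B(X,\tau)$.

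For a $\tau$-closed $Y$, both $Y$ (as a closed subspace) and $X\setminus Y$ (as an open subspace) are Polish in their induced topologies — for the latter one uses the standard trick of adding $|1/\dist(x,Y) - 1/\dist(y,Y)|$ to a compatible metric, see e.g.\ \Ke{Theorem 3.11}. The disjoint-sum topology $\tau'$ generated by $\tau \cup \{Y\}$ (equivalently by $\tau \cup \{X\setminus Y\}$) makes $Y$ clopen, is Polish as a topological disjoint union of two Polish spaces, and shares its Borel $\sigma$-algebra with $\tau$ since only a $\tau$-Borel generator was added.

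For countable unions, suppose $Y_n \in \C F$ with witnesses $\tau_n$. Let $\tau_\infty$ be the topology generated by $\bigcup_n \tau_n$. The crucial sub-step is to check that $\tau_\infty$ is Polish with $\C B(X,\tau_\infty) = \C B(X,\tau)$: consider the diagonal map $\delta: (X,\tau_\infty) \to \prod_n (X,\tau_n)$, $x \mapsto (x,x,\dots)$. Since each $\tau_n$ refines the Hausdorff topology $\tau$, one checks that $\delta(X)$ is closed in the countable Polish product (any non-diagonal point has mismatched coordinates which can be separated by pulled-back $\tau$-open sets), and $\delta$ is a homeomorphism onto its image because the pullback of the product topology is precisely $\tau_\infty$; thus $\tau_\infty$ is Polish. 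A countable subbase of $\tau_\infty$ obtained by combining countable bases of the $\tau_n$ consists of $\tau$-Borel sets, so $\C B(X,\tau_\infty) = \C B(X,\tau)$. In $\tau_\infty$ each $Y_n$ is clopen, so $Y := \bigcup_n Y_n$ is open and $X \setminus Y$ is $\tau_\infty$-closed. Applying the closed-set case to $X \setminus Y$ inside $(X,\tau_\infty)$ yields a further Polish refinement in which $X\setminus Y$, and hence $Y$, is clopen.

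The main technical hurdle is verifying Polishness of the countable supremum $\tau_\infty$, since everything else reduces to direct manipulation of disjoint sums of Polish spaces and an easy closure check. Once that supremum lemma is in hand, the $\sigma$-algebra observation closes the proof immediately.
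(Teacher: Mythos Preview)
Your proof is correct and is essentially the standard argument found in the references the paper cites (the paper does not give its own proof, simply pointing to \Ke{Theorem 13.1} or \Ts{Theorem 11.16}); in particular, the three ingredients you use---making a closed set clopen via the disjoint-sum topology, taking the supremum of countably many Polish refinements via the diagonal embedding into the product, and then a $\sigma$-algebra closure argument---are exactly the steps in Kechris's proof.
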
 
	\begin{proof} See e.g.~\Ke{Theorem 13.1} or \Ts{Theorem 11.16}.\end{proof}
	
	\begin{remark}\label{rm:SubB} In fact, any two nested Polish topologies $\tau\subseteq \tau'$ on a common set $X$ generate the same Borel $\sigma$-algebra (so this conclusion could be omitted from the statement of Lemma~\ref{lm:SubB}). Indeed, the identity map $(X,\tau')\to (X,\tau)$ is continuous and thus Borel. By Theorem~\ref{th:LNU}, its unique right inverse, which is the identity map $(X,\tau)\to (X,\tau')$, is Borel. Thus $\C B(X,\tau')=\C B(X,\tau)$.\end{remark}

	\begin{cor}\label{cr:SubB} If $X$ is a standard Borel space and $Y\in\C B(X)$, then $(Y,\induced{\C B(X)}{Y})$ is a standard Borel space, where for $\C A\subseteq 2^X$ we denote $\induced{\C A}{Y}:=\{A\cap Y\mid A\in\C A\}$.\end{cor}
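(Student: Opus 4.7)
The plan is to directly reduce to Lemma~\ref{lm:SubB}, which provides the Polish topology needed to witness that $Y$ is itself standard Borel.

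First, I would fix any Polish topology $\tau$ on $X$ with $\C B(X)=\sigma_X(\tau)$, and apply Lemma~\ref{lm:SubB} to the Borel set $Y\in\C B(X,\tau)$ to obtain a refined Polish topology $\tau'\supseteq\tau$ in which $Y$ is closed, with $\C B(X,\tau')=\C B(X,\tau)=\C B(X)$. Since a closed subset of a Polish space is Polish (completeness is inherited by closed subspaces, as is separability), the subspace topology $\tau'_Y:=\induced{\tau'}{Y}=\{U\cap Y:U\in\tau'\}$ makes $Y$ a Polish space.

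Next, I would verify the equality of $\sigma$-algebras
\[
\sigma_Y(\tau'_Y)=\induced{\C B(X)}{Y},
\]
which says that the intrinsic Borel $\sigma$-algebra of the Polish subspace $(Y,\tau'_Y)$ is exactly the trace on $Y$ of the ambient Borel $\sigma$-algebra. The inclusion $\subseteq$ is immediate: $\induced{\C B(X)}{Y}$ is a $\sigma$-algebra on $Y$ (one checks closure under complement in $Y$ and countable unions by pulling back to $X$) and contains $\tau'_Y$ since $\tau'\subseteq\C B(X)$. For the reverse inclusion $\supseteq$, I would use the standard ``good sets'' argument: the family $\C D:=\{A\in\C B(X):A\cap Y\in\sigma_Y(\tau'_Y)\}$ is a $\sigma$-algebra on $X$ containing $\tau'$, so $\C D=\sigma_X(\tau')=\C B(X)$, giving $A\cap Y\in\sigma_Y(\tau'_Y)$ for every $A\in\C B(X)$.

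Combining the two steps, $\tau'_Y$ is a Polish topology on $Y$ whose Borel $\sigma$-algebra is $\induced{\C B(X)}{Y}$, so $(Y,\induced{\C B(X)}{Y})$ is a standard Borel space by definition. I do not expect any serious obstacle: the nontrivial input, namely producing a Polish topology in which $Y$ is closed while preserving the Borel $\sigma$-algebra, is already provided by Lemma~\ref{lm:SubB}; the rest is the routine trace-of-$\sigma$-algebra computation above, which is arguably the only step that requires a small verification.
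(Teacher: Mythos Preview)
Your proposal is correct and follows essentially the same route as the paper's proof: both fix a Polish topology $\tau$ generating $\C B(X)$, invoke Lemma~\ref{lm:SubB} to refine to $\tau'$ making $Y$ closed with the same Borel sets, pass to the subspace topology on $Y$, and then identify its Borel $\sigma$-algebra with $\induced{\C B(X)}{Y}$. The only difference is that you spell out the trace-$\sigma$-algebra computation via a good-sets argument, whereas the paper dismisses this step with ``as it is easy to see''.
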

	
	\begin{proof} Fix a Polish topology $\tau$ on $X$ that generates $\C B(X)$, that is, $\sigma_X(\tau)=\C B(X)$. By Lemma~\ref{lm:SubB}, there is a Polish topology $\tau'\supseteq \tau$ for which $Y$ is closed. Let $\tau'':=\{U\cap Y\mid U\in\tau'\}$. As it is easy to see, $(Y,\tau'')$ is a Polish space and $\sigma_Y(\tau'')=\induced{\sigma_X(\tau')}{Y}$. By the second conclusion of Lemma~\ref{lm:SubB} (or by Remark~\ref{rm:SubB}), we have that $\sigma_X(\tau')=\sigma_X(\tau)$, finishing the proof.\end{proof}
	
	\section{Borel Graphs: Definition and Some Examples}
	\label{se:BorelGraphs}
	
	For a short discussion of bounded-degree Borel graphs, see Lov\'asz~\cite[Section 18.1]{Lovasz:lngl}.
	
	A \emph{Borel graph} is a triple $\C G=(V,E,\C B)$ such that $(V,E)$ is a graph (that is, $E\subseteq V^2$ is a symmetric and anti-reflexive relation), $(V,\C B)$ is a standard Borel space, and $E$ is a Borel subset of $V\times V$.
	(As it follows from~\eqref{eq:BProduct}, the Borel $\sigma$-algebra on $V\times V$ depends only on $\C B(V)$ but not on the choice of a compatible Polish topology $\tau$ on~$V$.)
	
	All our graph theoretic notation will also apply to Borel graphs and, when the underlying Borel graph $\C G$ is clear, we usually remove any reference to $\C G$ from notation (writing $N(x)$ instead of $N_{\C G}(x)$, etc). Note that we  use the calligraphic letter $\C G$ to emphasise that it is a Borel graph
	(whereas $G$ is used for general graphs).
	
	\begin{remark}
		\label{rm:VChoose2}
		If one prefers, then one can work with the edge set as a subset of ${V\choose 2}$, the set of all unordered pairs of distinct elements of~$V$.
		The standard $\sigma$-algebra on ${V\choose 2}$ (which also makes it a standard Borel space) is obtained by taking all those sets $A\subseteq {V\choose 2}$ for which  $\pi^{-1}(A)$ is Borel subset of $V^2$, where $\pi:V^2\to {V\choose 2}$ is the natural projection mapping $(x,y)$ to $\{x,y\}$. 
		In terms of Polish topologies, if we fix a topology $\tau$ on $V$, then we consider the \emph{factor topology} $\tau'$ on ${V\choose 2}$ with respect to $\pi$ (that is, the largest topology that makes $\pi$ continuous) and take the Borel $\sigma$-algebra of~$\tau'$.\ecpf
	\end{remark}

	Here are some examples of Borel graphs.

	\bex\label{ex:CtblG} For every graph $(V,E)$ whose vertex set is countable, the triple $(V,E,2^V)$ is a Borel graph.  Indeed, $(V,2^V)$ is a standard Borel space (take, for example, the discrete topology on~$V$). Then $\C B(V\times V)$ contains all singleton sets (as closed sets) and, being closed under countable unions, it contains all subsets of $V\times V$, in particular, the edge set~$E$.\ecpf\eex

	\bex\label{ex:BAction}
	Let $(\Ga;S)$ be a \emph{marked group}, that is $\Ga$ is a group generated by a finite set $S\subseteq \Ga$ which is \emph{symmetric}, that is, $S=S^{-1}$ where $S^{-1}:=\{\gamma^{-1}\mid \gamma\in S\}$. (We do not assume that $S$ is minimal in any sense.) Let $a:\Gamma\actson X$ be a (left) action of $\Gamma$ on a Polish space $X$ which is \emph{Borel}, meaning for the countable group $\Ga$ 
	that
	for every $\gamma\in \Ga$ the bijection $a(\gamma,\cdot):X\to X$, which maps $x\in X$ to $\gamma.x$, is Borel. Let the \emph{Schreier graph} $\C S(a;S)$  have $X$ as the vertex set and 
	\beq{eq:Schreier}
	\{(x,\gamma.x)\mid x\in X,\ \gamma\in S\}\setminus\DIAG_X
	\eeq 
	as the edge set. (Note that, regardless of the choice of a Polish topology on $X$,
	the diagonal $\DIAG_X=\{(x,x)\mid x\in X\}$ is a closed and thus Borel subset of~$X^2$.) Thus $\C S(a;S)$ is a Borel graph by Lemma~\ref{lm:BorelF}.\ecpf\eex
	
	For example, the irrational rotation graph $\C R_\alpha$ from Example~\ref{ex:Rotation} is a Borel graph, e.g.\ as the Schreier graph of the Borel action of the marked group $(\I Z,\{-1,1\})$ on $[0,1)$ given by $n.x:=x+n\alpha\pmod 1$ for $n\in\I Z$ and $x\in [0,1)$.
	
	By Corollary~\ref{cr:SubB}, if $\C G$ is a Borel graph and $Y$ is a Borel subset of $V$, then $\induced{\C G}{Y}$ is again a Borel graph. 
	Here is one case that often arises in the context of Example~\ref{ex:BAction} (and that we will need later in Section~\ref{se:Marks16}). 
	The \emph{free part} of a group action $a:\Ga\actson X$ is 
	\beq{eq:Free}
	\mathrm{Free}(a):=\{x\in X\mid \forall \gamma\in\Gamma\setminus\{e\}\  \ \gamma.x\not=x\}.
	\eeq
	As it is trivial to see, $x\in X$ belongs to the free part if and only if the map 
	$\Gamma\to X$ 
	that sends $\gamma\in \Gamma$ to $\gamma.x$ is injective.

	\begin{lemma}\label{lm:FreeB} The free part of a Borel action $a:\Gamma\actson X$ of a countable group $\Gamma$ is Borel.\end{lemma}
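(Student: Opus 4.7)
The plan is to express $\mathrm{Free}(a)$ as a countable Boolean combination of Borel sets indexed over $\Gamma$. Specifically, for each $\gamma\in\Gamma$, let $\mathrm{Fix}(\gamma):=\{x\in X\mid \gamma.x=x\}$. Then directly from the definition
$$
\mathrm{Free}(a)=X\setminus\bigcup_{\gamma\in\Gamma\setminus\{e\}}\mathrm{Fix}(\gamma),
$$
and since $\Gamma$ is countable, it suffices to show that each set $\mathrm{Fix}(\gamma)$ is Borel. Once that is done, the result follows because Borel sets form a $\sigma$-algebra.

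To show $\mathrm{Fix}(\gamma)\in\C B(X)$, I would consider the map $f_\gamma:X\to X\times X$ defined by $f_\gamma(x):=(x,\gamma.x)$. The first coordinate is the identity on $X$ and the second is $a(\gamma,\cdot)$, which is Borel by hypothesis; hence $f_\gamma$ is Borel (the preimage of a generating rectangle $U\times V$ is $U\cap a(\gamma,\cdot)^{-1}(V)$, an intersection of two Borel sets, and one appeals to~\eqref{eq:BProduct}). The diagonal $\DIAG_X\subseteq X^2$ is closed in any compatible Polish topology on $X$ and therefore Borel. Consequently $\mathrm{Fix}(\gamma)=f_\gamma^{-1}(\DIAG_X)$ is Borel.

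Putting these two steps together gives $\mathrm{Free}(a)\in\C B(X)$, as desired. There is no real obstacle here; the only point that requires a moment of care is verifying that $f_\gamma$ is Borel measurable, which is a routine consequence of the definition of the product Borel $\sigma$-algebra together with the standing assumption that each $a(\gamma,\cdot)$ is Borel.
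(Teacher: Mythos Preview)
Your proof is correct and follows the same overall strategy as the paper: write the complement of $\mathrm{Free}(a)$ as the countable union $\bigcup_{\gamma\ne e}\mathrm{Fix}(\gamma)$ and show each $\mathrm{Fix}(\gamma)$ is Borel. The only difference is in the last step: you obtain $\mathrm{Fix}(\gamma)=f_\gamma^{-1}(\DIAG_X)$ as a Borel preimage, whereas the paper writes $\mathrm{Fix}(\gamma)=\pi_0\big(\DIAG_X\cap\{(x,\gamma.x)\mid x\in X\}\big)$ and appeals to Lemma~\ref{lm:BorelF} plus the Lusin--Novikov Theorem (Theorem~\ref{th:LNU}) to conclude that this projection is Borel; your preimage argument is the more elementary of the two and avoids the uniformization machinery entirely.
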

	\begin{proof} Using the definition in~\eqref{eq:Free}, we see that
		$$
		X\setminus \mathrm{Free}(a)=\bigcup_{\gamma\in\Ga\setminus\{e\}} \{x\in X\mid \ga.x=x\}=\bigcup_{\gamma\in\Ga\setminus\{e\}} \pi_0(\DIAG_X\cap \{(x,\gamma.x)\mid x\in X\})
		$$
		is a Borel set by Lemma~\ref{lm:BorelF} and the Lusin-Novikov Uniformization Theorem (Theorem~\ref{th:LNU}).\end{proof}

	\section{Basic Properties of Locally Finite Borel Graphs}
	\label{se:LocallyFinite}
	
	Recall that, unless stated otherwise,
	we consider \emph{locally finite} graphs only, that is, those graphs in which every vertex has finitely many neighbours. Note that we do not require that all degrees are uniformly bounded by some constant. This is already a very rich and important class in descriptive combinatorics, and is a very natural one from the point of view of finite combinatorics. 
	
	\begin{lemma}\label{lm:BorelG} Let $(V,\C B)$ be a standard Borel space and let $G=(V,E)$ be a locally finite graph. Then the following are equivalent.
		\begin{enumerate}[(i),nosep] 
			\item\label{it:BorelG1} The set $E\subseteq V^2$ is Borel (i.e.\ $(V,E,\C B)$ is a Borel graph).
			\item \label{it:BorelG2} For every Borel set $Y\subseteq V$, its neighbourhood $N_G(Y)$ is Borel.
			\item \label{it:BorelG3} For every Borel set $Y\subseteq V$, the $1$-ball $N^{\le 1}_G(Y)$ around $Y$ (i.e.\ the set of vertices at distance at most $1$ from $Y$) is Borel.
		\end{enumerate}
	\end{lemma}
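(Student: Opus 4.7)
The plan is to prove the cycle \ref{it:BorelG1} $\Rightarrow$ \ref{it:BorelG2} $\Rightarrow$ \ref{it:BorelG3} $\Rightarrow$ \ref{it:BorelG1}.

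For \ref{it:BorelG1} $\Rightarrow$ \ref{it:BorelG2}, I would fix a Borel $Y\subseteq V$ and consider the Borel set $E\cap(Y\times V)\subseteq V^2$ together with the projection $\pi_1:V^2\to V$ onto the second coordinate. Since $G$ is locally finite, each fibre of $\pi_1$ restricted to $E\cap(Y\times V)$ has size at most $\deg_G(y)<\omega$, so the Lusin--Novikov Uniformization Theorem (Theorem~\ref{th:LNU}) applies and yields that $\pi_1(E\cap(Y\times V))=N_G(Y)$ is Borel. The step \ref{it:BorelG2} $\Rightarrow$ \ref{it:BorelG3} is immediate since $N^{\le 1}_G(Y)=Y\cup N_G(Y)$ is a union of two Borel sets.

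The main content is \ref{it:BorelG3} $\Rightarrow$ \ref{it:BorelG1}, which I expect to be the chief obstacle because the hypothesis is only a ``set-level'' statement about $1$-balls, whereas the conclusion is about the edge relation as a subset of $V^2$. My plan is to bridge this gap using a countable algebra. By Lemma~\ref{lm:J} together with Remark~\ref{rm:GenAlg}, fix an algebra $\C J=\{J_n\mid n\in\omega\}$ on $V$ with $\sigma_V(\C J)=\C B$. By hypothesis \ref{it:BorelG3}, each set $N^{\le 1}_G(J_n)$ is Borel. The claim I would establish is the identity
\[
E \;=\; (V^2\setminus\DIAG_V)\,\cap\,\bigcap_{n\in\omega}\bigl(((V\setminus J_n)\times V)\cup(V\times N^{\le 1}_G(J_n))\bigr),
\]
which presents $E$ as a countable intersection of Borel sets, hence Borel. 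The inclusion $\subseteq$ is trivial: if $(x,y)\in E$ and $x\in J_n$, then $y\in N_G(\{x\})\subseteq N^{\le 1}_G(J_n)$.

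For the reverse inclusion, suppose $(x,y)$ lies in the right-hand side but $(x,y)\notin E$. Then $x\neq y$ and $x\notin N_G(y)$; since $G$ is locally finite, $N_G(y)$ is a finite set $\{w_1,\dots,w_k\}$, and the two finite sets $\{x\}$ and $\{y,w_1,\dots,w_k\}$ are disjoint. Applying Lemma~\ref{lm:Sep}, I obtain some $J\in\C J$ with $x\in J$ and $J\cap\{y,w_1,\dots,w_k\}=\emptyset$. But then $y\notin J$ and $y$ has no neighbour in $J$ (its only neighbours are the $w_i$, all outside $J$), so $y\notin N^{\le 1}_G(J)$, contradicting membership of $(x,y)$ in the right-hand side. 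This contradiction completes the proof of the identity, and hence of \ref{it:BorelG3} $\Rightarrow$ \ref{it:BorelG1}. The key insight that makes the argument go through is that a countable generating algebra provides enough separating sets to isolate each potential edge $(x,y)$ from any alleged non-neighbour, and it is precisely local finiteness of $G$ that keeps $N_G(y)$ manageable by Lemma~\ref{lm:Sep}.
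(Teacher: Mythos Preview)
Your proof is correct and follows essentially the same route as the paper: the same cycle of implications, the Lusin--Novikov theorem for \ref{it:BorelG1}$\Rightarrow$\ref{it:BorelG2}, and a countable generating algebra together with Lemma~\ref{lm:Sep} for \ref{it:BorelG3}$\Rightarrow$\ref{it:BorelG1}. The only cosmetic difference is that the paper writes the complement $V^2\setminus E$ as $\DIAG_V\cup\bigcup_{J\in\C J}A_J$ with the symmetrised sets $A_J:=(J\times(V\setminus N^{\le1}(J)))\cup((V\setminus N^{\le1}(J))\times J)$, whereas you use only one half of each $A_J$---which, as your argument shows, already suffices.
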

	
	\begin{proof} Let us show that \ref{it:BorelG1} implies \ref{it:BorelG2}. Take any Borel $Y\subseteq V$. Note that $N(Y)$ is the projection of $Z:=(Y\times V)\cap E$ on the second coordinate.
		The projection is a continuous map and, as a map from $Z$ to $V$, has countable (in fact, finite) preimages. Thus $N(Y)=\pi_1(Z)$ is Borel by Theorem~\ref{th:LNU}. 
		
		The implication \ref{it:BorelG2}\ \ra\ \ref{it:BorelG3} trivially follows from
		$N^{\le1}(Y)=N(Y)\cup Y$ (and the $\sigma$-algebra $\C B$ being closed under finite unions).
		
		Let us show that \ref{it:BorelG3} implies \ref{it:BorelG1}. 
		Remark~\ref{rm:GenAlg} gives us a countable algebra $\C J$ on $V$  that generates~$\C B$.
		For $J\in\C J$, let $A_J$ be the union of $J\times (V\setminus N^{\le 1}(J))$ and its ``transpose'' $(V\setminus N^{\le 1}(J))\times J$. By~\eqref{eq:BProduct}, each set $A_J$ is Borel.  Recall that the diagonal $\DIAG_V=\{(x,x)\mid x\in V\}$ is a closed and thus Borel subset of~$V^2$. It is enough to prove that 
		\beq{eq:BorelG3}
		E=V^2\setminus \left(\DIAG_V\cup \left(\cup_{J\in \C J} A_J\right)\right),
		\eeq
		because this writes  $E$ as the complement of a countable union of Borel sets.
		
		By definition, each $A_J$ is disjoint from $E$ and thus the forward inclusion in~\eqref{eq:BorelG3} is obvious. Conversely, take any $(x,y)\in V^2\setminus E$. Suppose that $x\not=y$ as otherwise $(x,y)\in \DIAG_V$ and we are done. By 
		Lemma~\ref{lm:Sep}
		there is a set $J\in\C J$ which contains $x$ but is disjoint from the finite set~$N^{\le 1}(y)$. Then $y\not\in N^{\le 1}(J)$ and the pair $(x,y)$ belongs to $A_J$, giving the required.\end{proof}

	\begin{cor}\label{cr:Gr} Let $r\in \omega$. If $\C G=(V,E,\C B)$ is a locally finite Borel graph, then so is its $r$-th power graph $\C G^r$ (as a graph on the standard Borel space~$V$).\end{cor}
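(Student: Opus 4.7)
The plan is to apply Lemma~\ref{lm:BorelG} to $\C G^r$, specifically verifying condition~\ref{it:BorelG3}. Since that lemma presumes local finiteness of the ambient graph, the first step is to check that $\C G^r$ is itself locally finite. I would do this by induction on $r$, using the identity $N^{\le r}_{\C G}(x) = N^{\le 1}_{\C G}\bigl(N^{\le r-1}_{\C G}(x)\bigr)$: the base case $r=0$ is trivial, and the inductive step writes a ball of radius $r$ as a finite union of $1$-balls around vertices of a finite set, each of which is finite by local finiteness of $\C G$. Consequently $\deg_{\C G^r}(x) \le |N^{\le r}_{\C G}(x)| - 1 < \omega$ for every $x \in V$.

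Next I would translate condition~\ref{it:BorelG3} for $\C G^r$ into a statement about $\C G$. For any $Y \subseteq V$, the $1$-ball around $Y$ in $\C G^r$ equals the $r$-ball around $Y$ in $\C G$, i.e.\ $N^{\le 1}_{\C G^r}(Y) = N^{\le r}_{\C G}(Y)$. So it suffices to prove that if $Y \in \C B$, then $N^{\le r}_{\C G}(Y) \in \C B$. This goes again by induction on $r$: the case $r=0$ is immediate since $N^{\le 0}_{\C G}(Y) = Y$, and for $r \ge 1$ we write $N^{\le r}_{\C G}(Y) = N^{\le 1}_{\C G}\bigl(N^{\le r-1}_{\C G}(Y)\bigr)$, where the inner set is Borel by the inductive hypothesis and then the outer $1$-ball is Borel by Lemma~\ref{lm:BorelG}\ref{it:BorelG3} applied to the Borel graph~$\C G$ itself.

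Combining the two points, $\C G^r$ is locally finite and satisfies~\ref{it:BorelG3}, so by Lemma~\ref{lm:BorelG} its edge set is a Borel subset of $V \times V$; thus $\C G^r = (V, E(\C G^r), \C B)$ is a Borel graph on the same standard Borel space. There is no serious obstacle: the whole argument is a short bootstrap of Lemma~\ref{lm:BorelG}. The only point to be careful about is not to apply that lemma to $\C G^r$ before local finiteness of $\C G^r$ has been verified, but both inductions proceed in parallel along the same identity for iterated neighbourhoods.
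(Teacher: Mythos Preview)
Your proposal is correct and follows essentially the same route as the paper: verify local finiteness of $\C G^r$, then check condition~\ref{it:BorelG3} of Lemma~\ref{lm:BorelG} via the identity $N^{\le 1}_{\C G^r}(Y)=N^{\le r}_{\C G}(Y)$ and iteration of the $1$-ball operation in~$\C G$. The paper phrases the iteration informally rather than as an explicit induction, but the argument is the same.
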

	\begin{proof} Trivially (or by a very special case of the K\"onig Infinity Lemma, see e.g.\ \cite[Lemma~8.1.2]{Diestel17gt}), the graph $\C G^r$ is locally finite. 
		
		In order to check that $\C G^r$ is a Borel graph, we verify Condition~\ref{it:BorelG3} of Lemma~\ref{lm:BorelG}. By the definition of $\C G^r$, we have  for every $A\subseteq V$ that $N^{\le 1}_{\C G^r}(A)=N^{\le r}_{\C G}(A)$. Also, we can construct $N^{\le r}_{\C G}(A)$ from $A$ by iteratively applying  $r$ times the $1$-ball operation in~$\C G$. By Lemma~\ref{lm:BorelG}\ref{it:BorelG3}, this operation preserves Borel sets. Thus $N^{\le 1}_{\C G^r}(A)$ is Borel for every Borel~$A\subseteq V$. We conclude that Condition~\ref{it:BorelG3} of Lemma~\ref{lm:BorelG} is satisfied for $\C G^r$ and thus this graph  is Borel.\end{proof}

	Here is an important consequence to Corollary~\ref{cr:Gr}. Recall that the connectivity relation $\C E_{G}$ of a graph $G$ consists of all pairs of vertices that lie in the same connectivity component of~$G$.
	
	\begin{cor}\label{cr:CEBorel}  If $\C G=(V,E,\C B)$ is a locally finite Borel graph, then $\C E_{\C G}$ is a Borel subset of $V^2$.\end{cor}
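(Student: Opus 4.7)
The plan is to write $\C E_{\C G}$ as a countable union of Borel sets, using Corollary~\ref{cr:Gr} on the powers of~$\C G$.

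First I would unfold the definition in~\eqref{eq:CE}: a pair $(x,y)\in V^2$ lies in $\C E_{\C G}$ exactly when $\dist_{\C G}(x,y)<\omega$, i.e.\ when there exists some $r\in\omega$ with $\dist_{\C G}(x,y)\le r$. Splitting off the case $x=y$, this gives the decomposition
\beq{eq:CEdec}
\C E_{\C G}\;=\;\DIAG_V\;\cup\;\bigcup_{r\in\omega}E(\C G^r),
\eeq
where $E(\C G^r)$ denotes the edge set of the $r$-th power graph, whose definition places $(x,y)$ in $E(\C G^r)$ precisely when $x\neq y$ and $\dist_{\C G}(x,y)\le r$.

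Next I would verify that every set on the right of~\eqref{eq:CEdec} is Borel. The diagonal $\DIAG_V$ is closed in $V^2$ (for any compatible Polish topology on $V$), hence Borel. For each fixed $r\in\omega$, Corollary~\ref{cr:Gr} asserts that $\C G^r$ is itself a Borel graph on the standard Borel space $V$, so $E(\C G^r)$ is a Borel subset of $V^2$ directly from the definition of a Borel graph.

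Finally, since the standard Borel $\sigma$-algebra on $V^2$ is closed under countable unions, combining the countably many Borel pieces in~\eqref{eq:CEdec} shows that $\C E_{\C G}\in\C B(V^2)$, which is the desired conclusion. There is no real obstacle here: once Corollary~\ref{cr:Gr} is in hand, the proof is essentially the one-line observation that ``connected'' means ``connected by a path of some finite length'' and that $\omega$ is countable.
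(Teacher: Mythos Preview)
Your proof is correct and follows essentially the same argument as the paper: the paper also writes $\C E_{\C G}$ as the union of the diagonal $\DIAG_V$ with the edge sets of $\C G^r$ over $r\ge 1$, invokes Corollary~\ref{cr:Gr} to see each $\C G^r$ is a Borel graph, and concludes by the closure of $\C B(V^2)$ under countable unions.
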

	
	\begin{proof} We have that $\C E_{\C G}$ is the union of the diagonal $\DIAG_V$ and the edge sets of $\C G^r$ over $r\ge 1$. As each $\C G^r$ is a Borel graph by Corollary~\ref{cr:Gr}, the set $\C E_{\C G}\subseteq V^2$ is Borel.\end{proof}
	
	\begin{remark}
		Addressing a question of the author, Chan~\cite{Chan20u} showed, under the set-theoretic assumption that the constructable continuum is the same as the continuum, that Lemma~\ref{lm:BorelG} fails for general locally countable graphs: 
		namely Property~\ref{it:BorelG2} (and thus also Property~\ref{it:BorelG3}) does not imply Property~\ref{it:BorelG1}. An example is given by the countable equivalence relation studied in
		\cite[Section~9]{Chan17jsl}. (Thus, combinatorially, the graph is a union of countable cliques.) However, it remains unclear if the above additional set-theoretic assumption is needed.  
	\end{remark}
	
	\subsection{Borel Colourings}
	\label{se:Chi}
	
	Let $\C G=(V,E,\C B)$ be a Borel graph (which we assume to be locally finite).
	
	A colouring $c:V\to X$ is \emph{proper} if no two adjacent vertices get the same colour. Since we will consider only the case of countable colour sets $X$ (when $2^X$ is the only $\sigma$-algebra making it a standard Borel space), we define $c: V\to X$ to be \emph{Borel} if the preimage under $c$ of every element of $X$ is Borel. When we view $c$ as a vertex colouring, this amounts to saying that each of (countably many) colour classes belongs to $\C B(V)$.
	
	\begin{lemma}\label{lm:CountableCol} Every locally finite Borel graph $\C G=(V,E,\C B)$ admits a Borel proper colouring $c:V\to\omega$.\end{lemma}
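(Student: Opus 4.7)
The plan is to produce the colouring greedily with respect to a fixed countable generating algebra, using the separation property of Borel sets (Lemma~\ref{lm:Sep}) to turn local finiteness into a finite search over the generators.

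First I would apply Lemma~\ref{lm:J} together with Remark~\ref{rm:GenAlg} to fix a countable algebra $\C J=\{J_n\mid n\in\omega\}$ on $V$ with $\sigma_V(\C J)=\C B$. For each vertex $v$, both $\{v\}$ and $N(v)$ are finite (since $\C G$ is locally finite and loopless, these sets are also disjoint), so by Lemma~\ref{lm:Sep} there exists some $n\in\omega$ with $v\in J_n$ and $J_n\cap N(v)=\emptyset$. Define
\[
c(v):=\min\{\,n\in\omega\mid v\in J_n\ \text{and}\ J_n\cap N(v)=\emptyset\,\}.
\]

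Next I would check that $c$ is a proper colouring. If $(v,w)\in E$ and $c(v)=c(w)=n$, then $v,w\in J_n$ by definition, but also $J_n\cap N(v)=\emptyset$, contradicting $w\in J_n\cap N(v)$.

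The main (and only nontrivial) step is verifying that each colour class is Borel. Observe that $J_n\cap N(v)\ne\emptyset$ is equivalent to $v\in N(J_n)$, and by Lemma~\ref{lm:BorelG}\ref{it:BorelG2} applied to the Borel set $J_n$, the set $N(J_n)$ is Borel. Hence
\[
c^{-1}(n)=\bigl(J_n\setminus N(J_n)\bigr)\setminus\bigcup_{m<n}\bigl(J_m\setminus N(J_m)\bigr),
\]
which is a Boolean combination of countably many Borel sets and therefore Borel. This shows $c:V\to\omega$ is a Borel proper colouring, completing the proof.
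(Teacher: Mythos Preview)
Your proof is correct and follows essentially the same approach as the paper: both fix a countable generating algebra $\C J$, define $c(v)$ to be the least $n$ with $v\in J_n$ and $N(v)\cap J_n=\emptyset$ (using Lemma~\ref{lm:Sep} for existence), and verify Borelness via Lemma~\ref{lm:BorelG}\ref{it:BorelG2}. Your Borelness argument is in fact slightly more direct than the paper's: you identify $\{v\mid N(v)\cap J_n=\emptyset\}=V\setminus N(J_n)$ and write $c^{-1}(n)$ explicitly as a finite Boolean combination of the Borel sets $J_m\setminus N(J_m)$, whereas the paper routes the same computation through the auxiliary set $A\subseteq V\times\omega$ and invokes the Lusin--Novikov Theorem for the projection (which is unnecessary here since the second factor is finite).
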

	
	\begin{proof} Fix a countable algebra $\C J=\{J_0,J_1,\dots\}$ generating $\C B(V)$, which exists by
		Remark~\ref{rm:GenAlg}. Define
		$$A:=\{(x,k)\in V\times \omega\mid x\in J_k\ \wedge\ N(x)\cap J_k=\emptyset\}.
		$$
		Thus $(x,k)\in A$ if the $(k+1)$-st set $J_k$ of $\C J$ separates $x$ from all its neighbours. 
		For every $x\in V$, there is at least one $k\in\omega$ with $(x,k)\in A$ by Lemma~\ref{lm:Sep}, and we define $c(x)$ to be the smallest such $k\in\omega$. 
		
		Clearly, if two distinct vertices $x$ and $y$ get the same colour $k$ then they are both in $J_k$ and cannot be adjacent as their neighbourhoods are disjoint from~$J_k$. Thus $c:V\to\omega$ is a proper vertex colouring.

		It remains to argue that the map $c:V\to\omega$ is Borel. For $k\in\omega$, define
		$$
		B_k:=\{x\in V\mid c(x)\ge k\}.
		$$
		Since 
		$c^{-1}(k)=B_{k}\setminus B_{k+1}$, it is enough so show that each $B_k$ is Borel.
		The complement of $B_k$ is exactly the image of $A\cap (V\times k)$ under the projection~$\pi_0$. Thus, by Theorem~\ref{th:LNU}, it suffices to show that $A\subseteq V\times\omega$ is Borel. As it is easy to see, $A=\cup_{k\in\omega} ((J_k\setminus N(J_k))\times \{k\})$. Each set in this countable union is Borel by Lemma~\ref{lm:BorelG}\ref{it:BorelG2}. So $A$ is Borel, finishing the proof.\end{proof}
	
	\hide{
		\begin{remark} Alternatively, a colouring $c$ satisfying Lemma~\ref{lm:CountableCol} can be  constructed from any Borel injection $I:V\to 2^{\omega}$ (that exists by Lemma~\ref{lm:I}) by letting $c(x)$ be the smallest prefix of $I(x)$ which distinguishes $x$ from every neighbour $y\in N(x)$. (Here, the set of colours will be the set of finite binary sequences, which is still a countable set.)
			\ecpf\end{remark}
	}
	
	The main idea of the proof of the following result is very useful in Borel combinatorics: we first show that we can cover $V$ by countably many  sets that are sufficiently ``sparse'' (namely, independent in this proof) and then apply some parallel algorithm (namely, greedy colouring) where we take these sets one by one and process all vertices of the taken set in one go.

	\begin{theorem}[Kechris et al.~\cite{KechrisSoleckiTodorcevic99}]
		\label{th:MaxInd} 
		Every locally finite Borel graph $\C G$ has a maximal independent set $A$ which is Borel.
	\end{theorem}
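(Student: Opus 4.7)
The plan is to combine Lemma~\ref{lm:CountableCol} with a ``greedy in parallel'' construction along the countably many colour classes. First, I would invoke Lemma~\ref{lm:CountableCol} to fix a Borel proper colouring $c:V\to\omega$. Each preimage $C_k:=c^{-1}(k)$ is a Borel independent set, and these sets partition $V$. The idea is to process the classes $C_0,C_1,\dots$ in order, adding to our set every vertex of the current class that does not conflict with what has already been chosen.

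Formally, I would define Borel sets $A_k\subseteq V$ recursively by $A_0:=C_0$ and
\beq{eq:MaxIndRec}
A_{k+1}:=A_k\cup \bigl(C_{k+1}\setminus N(A_k)\bigr),\quad k\in\omega,
\eeq
and then set $A:=\bigcup_{k\in\omega} A_k$. Borelness propagates through the recursion: if $A_k$ is Borel then $N(A_k)$ is Borel by Lemma~\ref{lm:BorelG}\ref{it:BorelG2}, hence $A_{k+1}$ is the union of two Borel sets, and $A$ is then a countable union of Borel sets, so $A\in\C B(V)$.

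For independence I would argue by induction that each $A_k$ is independent. The base case holds because $C_0$, as a colour class of a proper colouring, is independent. For the inductive step, any new vertex added to $A_{k+1}$ lies in $C_{k+1}$ (so has no edges inside $C_{k+1}$) and, by construction, is not a neighbour of any vertex of~$A_k$; thus no new edges arise. Since $A$ is the nested union of the $A_k$'s, any two adjacent vertices of $A$ would already have been in some common $A_k$, contradicting the inductive conclusion. For maximality, take any $x\in V\setminus A$ and let $k:=c(x)$. Then $x\in C_k$ but $x\notin A_k$, so by~\eqref{eq:MaxIndRec} (or directly for $k=0$, where it is vacuous, in which case we instead observe that $x\in C_0\subseteq A_0=A$, a contradiction, so $k\ge 1$) we must have $x\in N(A_{k-1})$, i.e.\ $x$ has a neighbour in $A_{k-1}\subseteq A$. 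Hence $A\cup\{x\}$ is not independent, witnessing maximality.

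I do not anticipate a serious obstacle: the only subtlety is ensuring that each $A_k$ is Borel, which rests entirely on the neighbourhood-preserving-Borelness property recorded in Lemma~\ref{lm:BorelG}\ref{it:BorelG2}, itself derived from the Lusin--Novikov Uniformization Theorem. The construction is essentially the standard transfinite greedy algorithm, but because the colouring has only countably many colours the recursion terminates after $\omega$ stages, so no further machinery is needed.
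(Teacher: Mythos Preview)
Your proposal is correct and follows essentially the same approach as the paper: both invoke Lemma~\ref{lm:CountableCol} and run the greedy algorithm along the colour classes, with Borelness propagated via Lemma~\ref{lm:BorelG}\ref{it:BorelG2}. The only cosmetic difference is that the paper initialises with $A_0:=\emptyset$ and then adds $c^{-1}(i)\setminus N(A_i)$ at stage~$i+1$, whereas you shift indices by one and start with $A_0:=C_0$; the resulting set~$A$ and the verification of independence, maximality, and Borelness are the same.
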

	
	\begin{proof} Let $c:V\to\omega$ be the proper Borel colouring returned by Lemma~\ref{lm:CountableCol}.
		We apply the greedy algorithm where we process colours $i\in\omega$ one by one and, for each $i$, add to $A$ in parallel all vertices of colour $i$ that have no neighbours in the current set~$A$.
		
		Formally, let $A_{0}:=\emptyset$ and, inductively for $i\in\omega$, define
		$$
		A_{i+1}:=A_{i}\cup (c^{-1}(i)\setminus N(A_{i})).
		$$
		Finally, define $A:=\cup_{i\in\omega} A_i$.

		As $c$ is a proper colouring, each set $A_i$ is independent by induction on~$i$. Thus $A$, as the union of nested independent sets, is independent. Also, the set $A$ is maximal independent. Indeed, if $x\not\in A$ then, with $i:=c(x)$, the reason for not adding $x$ into $A_{i+1}\subseteq A$ was that $x$ has a neighbour in $A_{i}$ and thus a neighbour in $A\supseteq A_{i}$. 
		
		An easy induction on $i$ shows by Lemma~\ref{lm:BorelG}\ref{it:BorelG2} that each $A_i$ is Borel. Thus $A=\cup_{i\in\omega} A_i$ is also Borel.\end{proof}

	Recall that a set $A$ of vertices in a graph $G$ is called \emph{$r$-sparse} if the distance in $G$ between every two distinct elements of $A$ is larger than~$r$.
	
	\begin{cor}\label{cr:MaxRSparse}
		Every locally finite Borel graph $\C G$ has a maximal $r$-sparse set $A$ which is Borel.
	\end{cor}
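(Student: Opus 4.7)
The plan is to reduce the claim directly to the already proven Theorem~\ref{th:MaxInd} by passing to the $r$-th power of $\C G$. The key observation is definitional: a set $A\subseteq V$ is $r$-sparse in $\C G$ precisely when every two distinct elements of $A$ are at distance strictly greater than $r$ in $\C G$, which is the same as being an independent set in $\C G^r$. Moreover, being a \emph{maximal} $r$-sparse set in $\C G$ (no vertex outside $A$ can be added while preserving $r$-sparseness) translates exactly into being a \emph{maximal} independent set in $\C G^r$, since the additional pairs of vertices that appear as edges in $\C G^r$ but not in $\C G$ are exactly those at $\C G$-distance in $\{2,\dots,r\}$.

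Given this dictionary, the proof consists of two invocations of earlier results. First, I would apply Corollary~\ref{cr:Gr} to conclude that $\C G^r$ is itself a locally finite Borel graph on the standard Borel space $(V,\C B)$. Then I would apply Theorem~\ref{th:MaxInd} to $\C G^r$, obtaining a Borel maximal independent set $A\subseteq V$ of $\C G^r$. By the translation above, this $A$ is a Borel maximal $r$-sparse set in~$\C G$, as required.

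There is essentially no obstacle here: the only thing to verify carefully is that ``maximal'' transfers correctly under the passage from $\C G$ to $\C G^r$, but this is immediate from the fact that $\C G$ and $\C G^r$ share the same vertex set and that adjacency in $\C G^r$ encodes precisely the forbidden distance relation for $r$-sparseness. The Borel measurability of $A$ is inherited verbatim from Theorem~\ref{th:MaxInd}, and local finiteness of $\C G^r$ (needed to apply that theorem) is part of the statement of Corollary~\ref{cr:Gr}.
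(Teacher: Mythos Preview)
Your proposal is correct and follows exactly the same approach as the paper: observe that (maximal) $r$-sparse sets in $\C G$ are precisely (maximal) independent sets in $\C G^r$, invoke Corollary~\ref{cr:Gr} to see that $\C G^r$ is a locally finite Borel graph, and then apply Theorem~\ref{th:MaxInd} to~$\C G^r$.
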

	\begin{proof} A set $A\subseteq V$ is (maximal) $r$-sparse in $\C G$ if and only if it is (maximal) independent in~$\C G^r$. Thus the required Borel set $A$ exists by Theorem~\ref{th:MaxInd} applied to
		$\C G^r$, which is a locally finite Borel graph by Corollary~\ref{cr:Gr}.\end{proof}

	The \emph{Borel chromatic number $\chi_{\C B}(\C G)$}  of an arbitrary Borel graph $\C G$
	is  defined as the smallest cardinality of a standard Borel space $Y$ for which
	there is a Borel proper colouring $c:V\to Y$.  Trivially, $\chi_{\C B}(\C G)$ is at least the usual \emph{chromatic number} $\chi(\C G)$, which is the smallest cardinality of a set $Y$ with $\C G$ admitting a proper vertex colouring $V\to Y$ (which need not be constructive in any way); for more on $\chi(G)$ for infinite graphs see e.g.\ the survey by Komj\'ath~\cite{Komjath11}.
	Since we restrict ourselves to locally finite graphs here, we have by Lemma~\ref{lm:CountableCol} that both $\chi_{\C B}(\C G)$ and $\chi(\C G)$ are in~$\omega\cup\{\omega\}$.

	\begin{theorem}[Kechris et al.~\cite{KechrisSoleckiTodorcevic99}]
		\label{th:ChiB}
		Every Borel graph $\C G=(V,E,\C B)$ with finite maximum degree $d:=\Delta(\C G)$ 
		satisfies $\chi_{\C B}(G)\le d+1$.
	\end{theorem}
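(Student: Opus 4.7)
The plan is to mimic the greedy colouring algorithm in a Borel way, using the countable Borel proper colouring from Lemma~\ref{lm:CountableCol} to structure the recursion, just as in the proof of Theorem~\ref{th:MaxInd}.

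First, I would apply Lemma~\ref{lm:CountableCol} to fix a Borel proper colouring $c:V\to\omega$ (which may use infinitely many colours). The idea is then to re-colour the vertices using only $\{0,1,\dots,d\}$ by processing the layers $c^{-1}(i)$ one at a time: when handling layer $c^{-1}(i)$, each vertex $x$ in that layer is simultaneously assigned $f(x)$ to be the least $j\in\{0,\dots,d\}$ not already used on any neighbour of $x$ lying in an earlier layer. Since vertices in $c^{-1}(i)$ form an independent set, these parallel choices do not clash with each other, and since $|N(x)|\le d$ there is always a free colour in $\{0,\dots,d\}$. Thus $f:V\to d+1$ is a well-defined proper colouring using at most $d+1$ colours.

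The heart of the argument, and the one step that needs care, is verifying Borelness of the sets $B_{i,j}:=\{x\in c^{-1}(i)\mid f(x)=j\}$, from which $f^{-1}(j)=\bigcup_{i\in\omega}B_{i,j}\in\C B$. I would proceed by induction on $i\in\omega$, assuming that $C_{<i,j}:=\bigcup_{k<i}B_{k,j}$ is Borel for every $j\in d+1$ (with $C_{<0,j}=\emptyset$). By the recursive rule,
\[
B_{i,j}=c^{-1}(i)\,\cap\,\bigl(V\setminus N(C_{<i,j})\bigr)\,\cap\,\bigcap_{j'<j}N(C_{<i,j'}),
\]
because $f(x)=j$ precisely when $j$ is unused on already-coloured neighbours of $x$ and every smaller colour is used. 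The sets $c^{-1}(i)$ are Borel since $c$ is Borel, while $N(C_{<i,j'})$ is Borel by Lemma~\ref{lm:BorelG}\ref{it:BorelG2} applied to the (inductively Borel) set $C_{<i,j'}$; finite intersections and complements preserve Borelness, so $B_{i,j}\in\C B$, closing the induction.

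The main obstacle is really just bookkeeping the induction so that the parallel greedy choice is expressed as an explicitly Borel formula; once it is written as above, the conclusion that each colour class of $f$ is a countable union of Borel sets is immediate. A minor point worth mentioning is that the initial layer $c^{-1}(0)$ is entirely coloured $0$ (its only neighbours so far lie in layers that do not exist), giving the base case $B_{0,0}=c^{-1}(0)$ and $B_{0,j}=\emptyset$ for $j\ge 1$, consistent with the general formula.
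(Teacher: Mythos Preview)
Your proposal is correct and matches the paper's own proof almost exactly: the paper outlines two approaches, and yours is the second one (take the countable Borel partition into independent sets from Lemma~\ref{lm:CountableCol}, process the layers one by one assigning each vertex the smallest available colour, and verify Borelness of the colour classes by induction via Lemma~\ref{lm:BorelG}\ref{it:BorelG2}). Your explicit formula for $B_{i,j}$ is a more detailed write-up of what the paper summarises in one sentence; the paper also mentions the alternative of iteratively stripping off Borel maximal independent sets via Theorem~\ref{th:MaxInd}, but your route is one of the intended ones.
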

	
	\begin{proof} One way to prove this result with what we already have is to iteratively keep removing Borel maximal independent sets from $\C G$ that exist by Theorem~\ref{th:MaxInd}. (Here we use Corollary~\ref{cr:SubB} to show that each new graph is Borel; alternatively, we could have removed only edges touching the current independent set while keeping $V$ unchanged.) Then the degree of each remaining vertex strictly decreases during each removal. Thus, after $d$ removals, every remaining vertex is isolated and, after $d+1$ removals, the vertex set becomes empty.
		
		Alternatively, we can take a countable partition  $V=\cup_{i\in \omega} V_i$ into Borel independent sets  given by Lemma~\ref{lm:CountableCol} and, iteratively for $i\in\omega$, 
		colour all vertices of the independent set $V_i$ in parallel, using the smallest available colour on each. Clearly, we use at most $d+1$ colours while an easy inductive argument on $i$ combined with Lemma~\ref{lm:BorelG}\ref{it:BorelG2} shows that the obtained colouring is Borel on each set~$V_i$.\end{proof}

	Here is a useful consequence of the above results. (Recall that we identify a non-negative integer $k$ with the set~$\{0,\dots,k-1\}$.)
	
	\begin{cor}\label{cr:RSparse} For every Borel graph $\C G=(V,E,\C B)$ of finite maximum degree $d$ and every integer $r\ge 1$ there is a Borel colouring $c:V\to k$ with $k:=1+d\sum_{i=0}^{r-1} (d-1)^i$ such that every colour class is $r$-sparse.\end{cor}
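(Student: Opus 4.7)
The plan is to reduce the statement to a direct application of Theorem~\ref{th:ChiB} for the Borel graph $\C G^r$. Recall the key observation, already used in the proof of Corollary~\ref{cr:MaxRSparse}, that a subset $X\subseteq V$ is $r$-sparse in $\C G$ if and only if $X$ is an independent set in the $r$-th power graph $\C G^r$. Thus a proper vertex colouring of $\C G^r$ is exactly the same thing as a colouring of $V$ in which every colour class is $r$-sparse in $\C G$, and the same equivalence holds at the level of Borel colourings.

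First I would invoke Corollary~\ref{cr:Gr} to conclude that $\C G^r$ is again a locally finite Borel graph on the standard Borel space~$V$. Next I would bound the maximum degree $\Delta(\C G^r)$. Since $\C G$ has maximum degree at most $d$, a standard counting argument (essentially the breadth-first ``sphere'' count in a graph of maximum degree $d$) shows that, for any vertex $x\in V$ and any $i\ge 1$, the number of vertices at distance exactly $i$ from $x$ in $\C G$ is at most $d(d-1)^{i-1}$: from $x$ there are at most $d$ neighbours, and from each subsequent vertex on a shortest path there are at most $d-1$ further new neighbours. Summing over $i\in\{1,\dots,r\}$ gives
\[
\Delta(\C G^r)\ \le\ \sum_{i=1}^{r} d(d-1)^{i-1}\ =\ d\sum_{i=0}^{r-1}(d-1)^i\ =\ k-1.
\]

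Finally, since $\C G^r$ is a Borel graph of finite maximum degree at most $k-1$, Theorem~\ref{th:ChiB} yields a Borel proper colouring $c:V\to k$ of~$\C G^r$. Translating back, each of the $k$ colour classes of $c$ is an independent set of $\C G^r$, hence an $r$-sparse subset of $\C G$, as required.

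There is no serious obstacle here: the content of the corollary is essentially just the composition of Corollary~\ref{cr:Gr} (``powers preserve Borelness and local finiteness'') with Theorem~\ref{th:ChiB} (``$\chi_{\C B}\le \Delta+1$''), and the only computation is the trivial BFS degree bound. The only minor point to be careful about is to note that $\C G^r$ may fail to have a uniform maximum degree if $\C G$ itself does not; however $\C G$ is assumed to have finite maximum degree $d$ in the statement, so the bound above is uniform and Theorem~\ref{th:ChiB} applies without modification.
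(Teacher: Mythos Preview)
Your proof is correct and follows exactly the same approach as the paper: apply Theorem~\ref{th:ChiB} to $\C G^r$, which is a Borel graph by Corollary~\ref{cr:Gr} and has maximum degree at most $k-1$. The paper's proof is a one-liner that declares the degree bound ``trivial''; you have merely spelled out the standard breadth-first count that justifies it.
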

	
	\begin{proof} Apply Theorem~\ref{th:ChiB} to the $r$-th power $\C G^r$, which is a Borel graph by Corollary~\ref{cr:Gr} and, trivially, has maximum degree at most $k-1$.%
		\hide{
			Alternatively, if the reader prefers to work with $\C G$ directly, then this amounts to the following modifications of the proof of Theorem~\ref{th:ChiB}. First, define $U(x)$ to be the smallest prefix of $I(x)$ that distinguishes $x$ from every $y\not=x$ at distance at most $r$ from~$x$ in~$\C G$. Then point preimages of $U$ give a countable Borel colouring of $\C G$ with $r$-sparse sets. In the second part, when processing each $W_i$, we colour each vertex $x\in W_i$ by the smallest available colour (given the colouring of $\cup_{j\in i} W_j$)}\end{proof}

	The set of edges $E\subseteq V^2$ is a Borel set, so $E$ is itself a standard Borel space by Corollary~\ref{cr:SubB}. In particular, it makes sense to talk about Borel edge $k$-colourings, meaning symmetric Borel functions $E\to k$.
	(Alternatively, one could have defined a Borel edge $k$-colouring as a  symmetric Borel function $c:V^2\to \{-1\}\cup k$ with $c(x,y)\ge 0$ if and only if $(x,y)\in E$, thus eliminating the need to refer to Corollary~\ref{cr:SubB} here.)

	The above results on independent sets and vertex colouring extend to matchings and edge colourings as follows.
	
	\begin{lemma}\label{lm:CountableM}
		The edge set of every locally finite Borel graph $\C G=(V,E,\C B)$ can be partitioned into countably many Borel matchings.\end{lemma}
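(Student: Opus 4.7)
The plan is to reduce the statement to Lemma~\ref{lm:CountableCol} via a \emph{line graph} construction. To sidestep the orientation issue (matchings are symmetric subsets of $V^2$), I first invoke Lemma~\ref{lm:I} to fix a Borel linear order $\preceq$ on $V$ and set
$$
E^+ := \{(x,y)\in E : x \prec y\},
$$
a Borel subset of $E$ that becomes a standard Borel space by Corollary~\ref{cr:SubB}. Define a graph $L$ on the vertex set $E^+$ by declaring two distinct elements $(x,y),(x',y')\in E^+$ adjacent in $L$ iff $\{x,y\}\cap\{x',y'\}\neq\emptyset$. Intuitively $L$ is the line graph of $\C G$ restricted to oriented edges.

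The next step is to check that $L$ is a locally finite Borel graph. Local finiteness is immediate: each $(x,y)\in E^+$ has at most $\deg_{\C G}(x)+\deg_{\C G}(y)-2$ neighbours in $L$, and both degrees are finite. For Borelness, I would write the adjacency relation of $L$ as a finite Boolean combination of Borel sets of the form $\{((x,y),(x',y'))\in (E^+)^2 : x=x'\}$, $\{((x,y),(x',y'))\in (E^+)^2 : x=y'\}$, and their three analogues; each of these is Borel in $(E^+)^2$ by \eqref{eq:BProduct} together with the fact that the diagonal $\DIAG_V\subseteq V^2$ is closed, hence Borel.

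Applying Lemma~\ref{lm:CountableCol} to $L$ then produces a Borel proper vertex colouring $c^+: E^+\to\omega$. By definition of $L$, any independent set in $L$ is a set of pairwise vertex-disjoint oriented edges of $\C G$. I would extend $c^+$ to a symmetric Borel map $c: E\to\omega$ by setting $c(x,y):=c^+(x,y)$ whenever $x\prec y$ and $c(x,y):=c^+(y,x)$ whenever $y\prec x$; this is a Borel function because $\preceq$ is Borel. The preimages $c^{-1}(i)\subseteq E$, $i\in\omega$, are then symmetric Borel matchings that partition $E$, as required. The only step requiring any care is the Borelness verification for $L$ — and this reduces to routine Boolean manipulations of Borel subsets of $V^2\times V^2$, so no serious obstacle is expected.
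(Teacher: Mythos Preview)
Your proof is correct. The paper's main proof takes a different, slightly slicker route: it applies Lemma~\ref{lm:CountableCol} not to the line graph but to the square $\C G^2$ (Borel by Corollary~\ref{cr:Gr}), obtaining a $2$-sparse Borel vertex colouring $c:V\to\omega$; then for each pair $i<j$ the set $M_{ij}:=E\cap((V_i\times V_j)\cup(V_j\times V_i))$ is automatically a Borel matching, since two edges in $M_{ij}$ sharing a vertex would place two vertices of the same colour class at distance at most~$2$. This avoids building the line graph and verifying its Borelness. Your approach is, however, explicitly acknowledged by the paper in Remark~\ref{rm:LineGraph} as a valid alternative; there the Borelness of $L(\C G)$ is argued via Lemma~\ref{lm:BorelG}\ref{it:BorelG3} rather than your direct Boolean-combination computation, but both work. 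The trade-off: the paper's proof is shorter and stays entirely within vertex-colouring machinery, while yours makes the identification ``matchings in $\C G$ $=$ independent sets in the line graph'' explicit, which is the viewpoint that also drives the subsequent results on maximal matchings and edge colourings.
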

	
	\begin{proof} Corollary~\ref{cr:Gr} and Lemma~\ref{lm:CountableCol}
		give a proper Borel vertex colouring $c:V\to\omega$ of~$\C G^2$, the square of~$\C G$. Thus each colour class $V_i:=c^{-1}(i)$, $i\in \omega$, is 2-sparse in~$\C G$.
		It follows that, for each pair $i<j$ in $\omega$, the set $M_{ij}:=E\cap ((V_i\times V_j)\cup (V_j\times V_i))$
		is a matching. Moreover, since each $V_i$ is independent in $\C G$, the Borel matchings $M_{ij}$ over all $i<j$ in $\omega$ partition $E$, as required.\end{proof}

	\begin{theorem}\label{th:MaxM}
		Every locally finite Borel graph $\C G=(V,E,\C B)$ has a maximal matching $M\subseteq E$ which is a Borel subset of $V^2$.\end{theorem}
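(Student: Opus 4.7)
The plan is to mimic the proof of Theorem~\ref{th:MaxInd} (the vertex analogue) by first decomposing $E$ into countably many Borel matchings and then greedily combining them into a single maximal matching. Concretely, by Lemma~\ref{lm:CountableM} there is a partition $E=\bigsqcup_{i\in\omega} M_i$ where each $M_i\subseteq V^2$ is a Borel matching. Set $N_0:=\emptyset$ and, inductively, let
\[
  N_{i+1}:=N_i\ \cup\ \bigl\{e\in M_i \mid e\text{ shares no endpoint with any edge of }N_i\bigr\},
\]
and finally put $M:=\bigcup_{i\in\omega} N_i$. An easy induction on $i$ shows each $N_i$ is itself a matching, and since the $N_i$ are nested, $M$ is a matching too.

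For maximality, observe that if $e\in E$ is not in $M$ then $e\in M_i$ for some unique $i$, so $e$ was rejected at stage $i$ because it shared an endpoint with some edge already in $N_i\subseteq M$; hence $M\cup\{e\}$ fails to be a matching, as needed.

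It remains to verify that each $N_i$ (and therefore $M$) is Borel. Since $N_i\subseteq V^2$ is symmetric and is a matching, the projection map $\pi_0:N_i\to V$ is injective (each vertex lies in at most one edge of $N_i$), so by the Lusin--Novikov Uniformization Theorem (Theorem~\ref{th:LNU}) the vertex set $W_i:=\pi_0(N_i)$ of edges of $N_i$ is a Borel subset of $V$. The newly added edges then form
\[
  N_{i+1}\setminus N_i = M_i\cap\bigl((V\setminus W_i)\times(V\setminus W_i)\bigr),
\]
which is Borel by~\eqref{eq:BProduct} since $M_i$ and $V\setminus W_i$ are Borel. An induction on $i$ thus gives that every $N_i$ is Borel, and $M=\bigcup_{i\in\omega} N_i$ is Borel as a countable union of Borel sets.

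The only nontrivial step is the Borelness of the set of endpoints of a Borel matching; but this is exactly the kind of countable-to-one projection handled by Theorem~\ref{th:LNU}, so there is no real obstacle. (As an alternative that the author may prefer, one could instead apply Theorem~\ref{th:MaxInd} directly to the line graph of $\C G$, once one checks, via Remark~\ref{rm:VChoose2} and Lemma~\ref{lm:BorelG}, that this line graph is itself a locally finite Borel graph on the standard Borel space $E$; a maximal independent set there is exactly a maximal matching in $\C G$.)
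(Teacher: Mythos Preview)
Your proof is correct and follows essentially the same approach as the paper: decompose $E$ into countably many Borel matchings via Lemma~\ref{lm:CountableM}, greedily union them in while discarding edges that touch the current matching, and use Theorem~\ref{th:LNU} to see that the set of matched vertices at each stage is Borel. Your parenthetical alternative via the line graph is also exactly what the paper sketches in Remark~\ref{rm:LineGraph}.
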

	\begin{proof} Let $M_i'$, $i\in\omega$, be the matchings returned by Lemma~\ref{lm:CountableM}. We construct $M$ greedily, by taking for each $i\in\omega$ all edges in $M_i'$ that are vertex disjoint from the current matching. 
		
		Formally, let $M_{0}:=\emptyset$ and, inductively for $i\in\omega$, define
		$$
		M_{i+1}:=M_{i}\cup (M_i'\setminus \big((\pi_0(M_{i})\times V)\cup (V\times \pi_0(M_{i}))\big).
		$$

		As in Theorem~\ref{th:MaxInd}, the set $M:=\cup_{i\in\omega} M_i$ is a maximal matching in~$\C G$. Also, each $M_{i+1}$ (and thus the final matching $M$) is Borel, which can argued by induction on $i$ (using Theorem~\ref{th:LNU} to show that $\pi_0(M_{i})$ is Borel).
	\end{proof}

	The \emph{Borel chromatic index $\chi_{\C B}'(\C G)$} 
	is the smallest  $k\in\omega\cup\{\omega\}$ such that there exists a Borel map $c:E\to k$ with no two intersecting edges having the same colour (equivalently, with each colour class being a matching). 
	Similarly to how Theorem~\ref{th:ChiB} was derived from Theorem~\ref{th:MaxInd}, the following result can be derived from Theorem~\ref{th:MaxM} by removing one by one maximal Borel matchings
	and observing that for every remaining edge the number of other edges that intersect it strictly decreases with each removal step.

	\begin{theorem}[Kechris et al.~\cite{KechrisSoleckiTodorcevic99}]
		\label{th:ChiB'} 
		Every Borel graph $\C G$ with finite maximum degree $d:=\Delta(\C G)$ 
		satisfies $\chi_{\C B}'(G)\le 2d-1$.
		\qed\end{theorem}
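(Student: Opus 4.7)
The plan is to mimic the derivation of Theorem~\ref{th:ChiB} from Theorem~\ref{th:MaxInd}, only now for edges rather than vertices. Starting from $\C G_0 := \C G$ with edge set $E_0 := E$, I would iteratively invoke Theorem~\ref{th:MaxM} on $\C G_i := (V, E_i, \C B)$ to obtain a Borel maximal matching $M_i' \subseteq E_i$, and then set $E_{i+1} := E_i \setminus M_i'$. Each $\C G_{i+1}$ is again a locally finite Borel graph of maximum degree at most $d$, since $E_{i+1} = E \setminus (M_0' \cup \dots \cup M_i')$ is a Borel subset of $V^2$ (as a Boolean combination of Borel sets) and we only delete edges; Theorem~\ref{th:MaxM} therefore remains applicable at every stage.

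The part I expect to be the main (albeit modest) point of the argument is pinning down the exponent $2d-1$ by showing that $E_{2d-1} = \emptyset$. I would fix an arbitrary edge $e = \{x,y\} \in E$ and observe that the number of edges of $\C G$ distinct from $e$ which share a vertex with $e$ is at most $(\deg(x)-1) + (\deg(y)-1) \le 2d - 2$. Suppose $e \in E_i$ for some $i \ge 1$. Then for every $j < i$, the maximality of $M_j'$ inside $\C G_j$ would be violated by $M_j' \cup \{e\}$ unless $M_j'$ already contains some edge $e_j$ sharing a vertex with $e$. The edges $e_0, \dots, e_{i-1}$ lie in pairwise disjoint matchings and each intersects $e$, so they are pairwise distinct, forcing $i \le 2d - 2$. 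Hence $E_{2d-1} = \emptyset$ and $E$ is the disjoint union of the $2d-1$ Borel matchings $M_0', \dots, M_{2d-2}'$.

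Defining $c : E \to 2d-1$ by $c(e) := i$ whenever $e \in M_i'$ now yields a Borel proper edge colouring: properness is immediate since every colour class is a matching, and $c$ is Borel because each preimage $c^{-1}(i) = M_i'$ is Borel by construction (with $E$ viewed as a standard Borel space via Corollary~\ref{cr:SubB}). No genuine obstacle arises beyond the counting; the only piece of bookkeeping I would flag is verifying at each stage that $\C G_{i+1}$ remains a locally finite Borel graph with $\Delta \le d$, so that Theorem~\ref{th:MaxM} can be reapplied, but this is immediate from $E_{i+1} \subseteq E$.
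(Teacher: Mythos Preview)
Your proposal is correct and follows essentially the same approach the paper sketches just before the theorem: iteratively strip off Borel maximal matchings via Theorem~\ref{th:MaxM} and use the count $(\deg(x)-1)+(\deg(y)-1)\le 2d-2$ on edges meeting a given edge to bound the number of rounds by $2d-1$. Your counting (exhibiting $i$ distinct neighbours of $e$ among $M_0',\dots,M_{i-1}'$) is just a repackaging of the paper's observation that this count strictly decreases at each removal, and the Borel bookkeeping you flag is exactly what is needed.
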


	\begin{remark}
		\label{rm:LineGraph} Lemma~\ref{lm:CountableM} and Theorems~\ref{th:MaxM}--\ref{th:ChiB'} can be also deduced by applying the corresponding results on independent sets to the \emph{line graph} $L(\C G)$ whose vertex set consists of unordered pairs $\{x,y\}$ with $(x,y)\in E$, where two distinct pairs are adjacent if they intersect. Let us just outline a proof that $L(\C G)$ is a Borel graph. 
		Recall the definition of the Borel $\sigma$-algebra on ${V\choose 2}$ from Remark~\ref{rm:VChoose2}.
		From this definition, it follows that the vertex set of the line graph $L(\C G)$ is a Borel subset of ${V\choose 2}$ and thus is itself a standard Borel space by Corollary~\ref{cr:SubB}.
		To show that the  line graph is Borel, we check Property~\ref{it:BorelG3} of Lemma~\ref{lm:BorelG}. By lifting all to $V^2$, it is enough to check a version of this property for every symmetric Borel set $A\subseteq E$. Now, $Y:=\pi_0(A)=\pi_1(A)$, the set of vertices covered by the edges in $A$, is Borel by Theorem~\ref{th:LNU} as the sizes of preimages under $\pi_0:A\to Y$ are finite. The $1$-ball of $A$ in the line graph corresponds to the set of edges of $\C G$ that intersect~$Y$. The latter set is equal to $E\cap ((Y\times V)\cup (V\times Y))$ and is thus Borel. Finally (assuming we have verified all steps above), we can conclude by Lemma~\ref{lm:BorelG} that $L(\C G)$ is Borel.\ecpf\end{remark}
	
	\begin{remark}
		Observe that there are locally countable Borel graphs that 
		do not admit a Borel proper vertex colouring with countably many colours (see e.g.\ \cite[Examples 3.13--16]{KechrisMarks:survey}); such graphs were completely characterised by Kechris et al.~\cite[Theorem 6.3]{KechrisSoleckiTodorcevic99} as containing a certain obstacle. On the other hand, Kechris et al.~\cite[Proposition~4.10]{KechrisSoleckiTodorcevic99} observed that,
		by the Feldman--Moore Theorem (Theorem~\ref{th:FeldmanMoore} here) 
		the statement of  Lemma~\ref{lm:CountableM} (and thus of Theorem~\ref{th:MaxM}) remains true also in the locally countable case.\ecpf\end{remark}
	
	\subsection{Local Rules}
	\label{se:Local}
	
	We will show in this section that all ``locally defined'' vertex labellings are Borel as functions. As a warm up,
	consider the degree function $\deg:V\to\omega$, which sends a vertex $x\in V$ to its degree $\deg(x)=|N(x)|$.
	
	\begin{lemma}\label{lm:deg} For every locally finite Borel graph $\C G=(V,E,\C B)$, the degree function $\deg_{\C G}:V\to\omega$ is Borel.\end{lemma}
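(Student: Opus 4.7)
The plan is to prove that each preimage $\deg_{\C G}^{-1}(k)$ is a Borel subset of $V$ for every $k\in\omega$, which by our convention is exactly what it means for $\deg_{\C G}$ to be Borel. The natural first step is to reduce this to showing that the upper level sets $Y_k:=\{x\in V\mid \deg(x)\ge k\}$ are Borel for all $k\in\omega$, since $Y_0=V$ and
$$
\deg_{\C G}^{-1}(k)=Y_k\setminus Y_{k+1}.
$$

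Next, for $k\ge 1$ I would realise $Y_k$ as a projection of an explicitly Borel subset of $V^{k+1}$ with finite fibres, and then apply the Lusin-Novikov Uniformization Theorem (Theorem~\ref{th:LNU}). Concretely, consider
$$
E^{(k)}:=\{(x,y_0,\dots,y_{k-1})\in V^{k+1}\mid \forall\, i\in k\ (x,y_i)\in E,\ \text{and}\ y_i\neq y_j\ \text{for}\ i\neq j\}.
$$
This is Borel in $V^{k+1}$ by~\eqref{eq:BProduct}: it is a finite intersection of preimages of the Borel sets $E\subseteq V^2$ and $V^2\setminus\DIAG_V$ under the continuous (hence Borel) coordinate projections $V^{k+1}\to V^2$. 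Clearly $Y_k=\pi_0(E^{(k)})$, where $\pi_0:V^{k+1}\to V$ sends $(x,y_0,\dots,y_{k-1})$ to $x$. Because $\C G$ is locally finite, the fibre of $\pi_0\!\upharpoonright\! E^{(k)}$ over any $x\in V$ consists of ordered $k$-tuples of pairwise distinct elements of the finite set $N(x)$ and hence is finite (and empty when $\deg(x)<k$). Theorem~\ref{th:LNU} therefore applies and gives that $Y_k$ is Borel.

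Combining the two steps yields that each $\deg_{\C G}^{-1}(k)$ is Borel. There is no real obstacle here: local finiteness is exactly what is needed to guarantee countable (in fact, finite) fibres for the projection, and the rest is routine Boolean manipulation of Borel sets. The same idea, incidentally, foreshadows the philosophy of the next subsection, where any labelling determined by the bounded-radius structure of $\C G$ around a vertex will be Borel for essentially the same projection-with-finite-fibres reason.
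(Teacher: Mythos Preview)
Your proof is correct. The initial reduction to showing that the upper level sets $Y_k=\{x\in V\mid \deg(x)\ge k\}$ are Borel is identical to the paper's, but your argument for this step differs from both alternatives offered there. The paper either (i) fixes a countable generating algebra $\C J$ and writes $Y_k$ as the countable union, over $k$-tuples of pairwise disjoint elements $J_{i_0},\dots,J_{i_{k-1}}\in\C J$, of the finite intersections $\bigcap_{m\in k}N(J_{i_m})$ (invoking Lemma~\ref{lm:BorelG}\ref{it:BorelG2}), or (ii) fixes a Borel proper edge colouring $c:E\to\omega$ and writes $Y_k$ as the union, over increasing $k$-tuples of colours, of $\bigcap_{m\in k}\pi_0(c^{-1}(i_m))$. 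Your route---building the explicit Borel set $E^{(k)}\subseteq V^{k+1}$ and projecting via Theorem~\ref{th:LNU} with finite fibres---is more direct and avoids both the generating algebra and the edge colouring; the paper's arguments, by contrast, stay inside $V$ and $V^2$ and serve its expository aim of illustrating how the elementary tools developed earlier (separating algebras, countable edge decompositions) are put to work.
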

	\begin{proof} 
		It is enough to show that for every $k\in\omega$ the set $D_k:=\{x\in V\mid \deg(x)\ge k\}$ is Borel, because the set of vertices of degree exactly $k$ is $D_k\setminus D_{k+1}$. 
		
		The most direct proof
		is probably to use a countable generating algebra $\C J=\{J_i\mid i\in\omega\}$ from Remark~\ref{rm:GenAlg}. Note that a vertex $x\in V$ has degree at least $k$ if and only if there are $k$ pairwise disjoint sets in $\C J$ with $x$ having at least one neighbour in each of them.
		Indeed, if $y_0,\dots,y_{k-1}\in N(x)$ are pairwise distinct then by Lemma~\ref{lm:Sep} there are $A_0,\dots,A_{k-1}\in\C J$ with $A_i\cap\{y_0,\dots,y_{k-1}\}=\{y_i\}$ for each $i\in k$ and if we let $B_0:=A_0$, $B_1:=A_1\setminus A_0$, $B_2:=A_2\setminus (A_0\cup A_1)$, and so on, then $B_0,\dots,B_{k-1}\in\C J$ have the required properties. 
		Thus, we can write $D_k$ as the countable union of the intersections $\cap_{m\in k} N(J_{i_m})$ over all $k$-tuples $i_0,\dots,i_{k-1}$ such that $J_{i_0},\dots,J_{i_{k-1}}$ are pairwise disjoint. By Lemma~\ref{lm:BorelG}\ref{it:BorelG2}, all neighbourhoods $N(J)$ for $J\in\C J$ and thus the set $D_k$ are Borel.
		
		Alternatively, fix any Borel proper edge colouring $c:E\to\omega$ which exists by Theorem~\ref{th:ChiB'}.
		Trivially, $\deg(x)\ge k$ if and only if there are at least $k$ distinct colours under $c$ at the vertex $x$. The set of vertices that belong to an edge of colour $i$ is $\pi_0(c^{-1}(i))$, which is Borel by Theorem~\ref{th:LNU}. Thus
		$$
		D_k=\bigcup_{i_0,\ldots,i_{k-1}\in\omega\atop i_0<\ldots<i_{k-1}}\ \bigcap_{m\in k}\ \pi_0(c^{-1}(i_m))
		$$
		is a Borel set.
		\hide{(Yet another proof of the lemma can be obtained along similar lines except fixing a 2-sparse Borel vertex colouring.)}
	\end{proof}

	In order to make the forthcoming general statement (Lemma~\ref{lm:LocalRule}) stronger and better suitable for applications, we consider a version where we may have some additional structure on graphs.
	Namely, a \emph{labelling} of a graph $G$ is any function $\ell$ from $V$ to some countable set; then we say that a pair $(G,\ell)$ is a \emph{labelled graph}.
	Vertex labellings allow us to encode many other types of structures on $G$ such as, for example, edge colourings (see Remark~\ref{rm:Reduction} for a reduction).  
	
	Let $\ell$ be a labelling of a  graph~$G=(V,E)$. For $r\in \omega$, let $\RR F_r$ be the function on $V$ which sends a vertex $x\in V$ to the isomorphism type of
	$$\induced{(G,c,x)}{N^{\le r}(x)}:=(\induced{G}{N^{\le r}(x)},\,\induced{c}{N^{\le r}(x)},\,x),$$
	the labelled graph induced by the $r$-ball $N^{\le r}(x)$ in $G$ rooted at $x$, where isomorphisms have to preserve also the root and the vertex labelling. Since we consider only locally finite graphs, $\RR F_r$ assumes countably many possible values and thus is an example of a labelling. 
	By a \emph{local rule of radius $r$} (or an \emph{$r$-local rule}) on $(G,\ell)$ we mean a function $\RR R$ on $V$ whose value at any $x\in V$ depends only on $\RR F_r(x)$. In other words, $r$-local rules are exactly those functions that factor through $\RR F_r$, that is, are representable as
	a composition $f\circ \RR F_r$ for some function~$f$. A function on $V$ is a \emph{local rule} if it is an $r$-local rule for some $r\in\omega$. Unless stated otherwise, we assume that local rules and labellings are functions from $V$ to $\omega$, that is, their values are non-negative integers.
	
	For example, the degree function $\deg$ or the number of triangles that contain a vertex are local rules of radius 1
	(that do not depend on the labelling). An example of a 1-local rule that uses the labelling $\ell$ is, say, 
	$x\mapsto|\ell(N(x))|$, the number of distinct $\ell$-labels on the neighbours of~$x$.

	Let $G$ be a graph with a labelling $\ell:V\to\omega$ such that, for every $x\in V$, its neighbours get pairwise distinct colours. 
	Fix some special element not in $V$, denoting it by~$\perp$. 
	For a non-empty sequence $S=(s_0,\dots,s_{j})\in \omega^{j+1}$ of labels, let us
	define a function $f_{S}:V\to V\cup\{\perp\}$ as follows. 
	Take any $x\in V$. If there is a \emph{walk} in $\C G$ of (edge) length $j$ (i.e.\ a sequence $(x_0,\dots,x_j)$ with $(x_i,x_{i+1})\in E$ for each $i\in j$) that starts with $x$ (i.e.\ $x_0=x$) and is \emph{$S$-labelled} (i.e.\ $\ell(x_i)=s_i$ for each $i\in j+1$) then let $f_{S}(x):=x_j$ be the final endpoint of this walk; otherwise let $f_{S}(x):=\,\perp$.
	By our assumption on $\ell$, there can be at most one such walk, so $f_S(x)$ is well-defined. 
	(Equivalently, we could have worked with partially defined functions,
	instead of using the special symbol~$\perp$.)
	For convenience, if $S=()$ is the empty sequence, then we define $f_{()}$ 
	to be the identity function on~$V$.

	\begin{lemma}\label{lm:fS} If $\C G$ is a locally finite Borel graph with a Borel labelling $\ell:V\to\omega$ that is injective on $N(x)$ for every $x\in V$, then for every $j\in\omega$ and every sequence $S=(s_0,\dots,s_{j})$ in $\omega^{j+1}$ the function $f_S:V\to V\cup\{\perp\}$ is Borel.
	\end{lemma}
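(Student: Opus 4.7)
The plan is to proceed by induction on $j\in\omega$, the length of $S$ minus one, showing that $f_S$ is a Borel subset of $V\times (V\cup\{\perp\})$. Here we view $V\cup\{\perp\}$ as a standard Borel space obtained by adjoining $\perp$ as an isolated point; then, since the codomain is standard Borel, Borel measurability of $f_S$ as a function is equivalent to $f_S$ being a Borel subset of the product by Lemma~\ref{lm:BorelF}.

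\emph{Base case $j=0$, so $S=(s_0)$.} By definition $f_{(s_0)}(x)=x$ if $\ell(x)=s_0$ and $f_{(s_0)}(x)=\,\perp$ otherwise. Since $\ell$ is Borel, the set $\ell^{-1}(s_0)\subseteq V$ is Borel, so
$$f_{(s_0)}=(\DIAG_V\cap(\ell^{-1}(s_0)\times V))\ \cup\ ((V\setminus \ell^{-1}(s_0))\times\{\perp\})$$
is Borel.

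\emph{Inductive step.} Suppose $f_{S'}$ is Borel for $S':=(s_0,\dots,s_j)$, and let $S:=(s_0,\dots,s_{j+1})$. By construction, for $x\in V$ and $y\in V$, we have $f_S(x)=y$ precisely when $f_{S'}(x)$ is a vertex $z\in V$, $(z,y)\in E$, and $\ell(y)=s_{j+1}$. So form the auxiliary set
$$B:=\{(x,y,z)\in V^3\mid f_{S'}(x)=z,\ (z,y)\in E,\ \ell(y)=s_{j+1}\},$$
which is Borel as the intersection of three Borel sets lifted to $V^3$ (using the inductive hypothesis on $f_{S'}$, the Borelness of $E$, and the Borelness of $\ell$). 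Let $A:=\{(x,y)\in V^2\mid \exists z\ (x,y,z)\in B\}$ be the projection of $B$ onto the first two coordinates. For each $(x,y)\in A$, the witness $z$ is uniquely determined as $z=f_{S'}(x)$, so the projection $B\to A$ has singleton fibres, and the Lusin-Novikov Theorem (Theorem~\ref{th:LNU}) yields that $A$ is Borel.

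Next, let $D:=\pi_0(A)=\{x\in V\mid f_S(x)\in V\}$. Because $\ell$ is injective on $N(z)$ for every $z\in V$, each $x\in D$ has at most one partner $y$ with $(x,y)\in A$, so the projection $\pi_0:A\to D$ again has singleton fibres and thus $D$ is Borel by Theorem~\ref{th:LNU}. Finally,
$$f_S=A\ \cup\ \bigl((V\setminus D)\times\{\perp\}\bigr)$$
is a Borel subset of $V\times(V\cup\{\perp\})$, completing the induction. The only real subtlety is the double use of Lusin-Novikov to absorb the existential quantifier introduced when following one more step of the walk and then to recover the ``failure'' set on which $f_S=\,\perp$; everything else is a direct bookkeeping of Borel operations and the Borelness of $E$ and $\ell$.
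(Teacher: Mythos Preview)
Your proof is correct and follows essentially the same approach as the paper: induction on $j$, with the inductive step extending the walk by one edge and using the Lusin--Novikov Theorem to handle the resulting projections. The only cosmetic difference is that you work directly with the graph of $f_S$ as a Borel subset of $V\times(V\cup\{\perp\})$ and invoke Lemma~\ref{lm:BorelF}, whereas the paper first shows $f_S^{-1}(V)$ is Borel and then writes $f_S$ on this set as a composition of Borel maps; the underlying content is the same.
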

	
	\begin{proof} Regardless of how we extend a Polish topology from $V$ to $V\cup\{\perp\}$, a subset $A$ of $V\cup\{\perp\}$ is Borel if and only if $A\cap V$ is Borel. Thus it is enough to check that the preimage $f_S^{-1}(V)$ is Borel and the restriction of $f_S$ to $f_S^{-1}(V)$ is a Borel function.
		
		We use induction on $j\in\omega$. If $j=0$, then $f_{(s_0)}$ is the identity function on the Borel set $\ell^{-1}(s_0)$ and assumes value $\perp$ otherwise; so $f_{(s_0)}$ is indeed Borel. Suppose that $j\ge 1$. Let $f:=f_{(s_0,\dots,s_j)}$ and $g:=f_{(s_0,\dots,s_{j-1})}$. 
		
		Observe that, for every $x\in V$, there is an $(s_0,\dots,s_j)$-labelled walk starting at $x$ if and only if there is an $(s_0,\dots,s_{j-1})$-labelled walk starting at $x$ and its endpoint $g(x)$ has a neighbour labelled~$s_j$. That is, 
		$$
		f^{-1}(V)= g^{-1}(V)\cap g^{-1}(N(\ell^{-1}(s_j))),
		$$
		and this set is Borel by induction and Lemma~\ref{lm:BorelG}\ref{it:BorelG2}. Let $Y:=g(g^{-1}(V))$ consist of the endpoints of all $(s_0,\dots,s_{j-1})$-labelled walks in~$\C G$. This set is Borel by Theorem~\ref{th:LNU} as the bijective image under the Borel map $g$ of the Borel set $g^{-1}(V)$. Let $Y'$ consist of those vertices in $Y$ that have a neighbour labelled $s_j$. Again by Theorem~\ref{th:LNU}, $Y'$ is Borel as the bijective image of 
		$$
		Y'':=\{(y,z)\in E\mid y\in Y\ \wedge\ \ell(z)=s_j\}=(Y\times \ell^{-1}(s_j))\cap E
		$$
		under the projection $\pi_0$ on the first coordinate. Moreover, the map $h:Y'\to Y''$ which is the (unique) right inverse of $\pi_0$ is Borel by the second part of Theorem~\ref{th:LNU}. The composition $\pi_1\circ h$ sends an element of $Y'$ to its unique neighbour labelled~$s_j$. Thus the function $f$ is Borel since, on the Borel set $f^{-1}(V)$, it is the composition $\pi_1\circ h\circ g$ of three Borel functions.\end{proof}

	\begin{lemma}\label{lm:LocalRule} Let $\C G=(V,E,\C B)$ be a locally finite Borel graph with a Borel labelling $\ell:V\to\omega$. Then every local rule $\RR R:V\to\omega$ on $(\C G,\ell)$ is a Borel function.\end{lemma}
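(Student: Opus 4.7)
The plan is to show that the local type function $\RR F_r:V\to\omega$ (well-defined after fixing any enumeration of the countably many isomorphism types of finite rooted labelled graphs of radius at most $r$) is a Borel function. Since $\RR R=f\circ\RR F_r$ for some $f$ on a countable set, this will yield the lemma. Equivalently, for every such finite rooted labelled graph $H=(V_H,E_H,\ell_H,z_H)$, the set $A_H\subseteq V$ of vertices whose labelled rooted $r$-ball is isomorphic to $H$ must be Borel, since $\RR R^{-1}(k)$ will then be a countable union of sets $A_H$.

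First I would reduce to the case where $\ell$ is injective on every $r$-ball, so that Lemma~\ref{lm:fS} applies. By applying Lemma~\ref{lm:CountableCol} to the locally finite Borel graph $\C G^{2r}$ (which is Borel by Corollary~\ref{cr:Gr}) I obtain a Borel proper vertex colouring $c:V\to\omega$ of $\C G^{2r}$, so $c$ is injective on every $r$-ball of $\C G$. Replacing $\ell$ by the Borel labelling $\ell':=(\ell,c)$ into the countable set $\omega\times\omega$ preserves the property that $\RR R$ is an $r$-local rule (since $\ell$ is recoverable from $\ell'$). After this reduction, I may also assume $\ell_H$ is injective on $V_H$, since otherwise $A_H=\emptyset$. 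I then enumerate $V_H=\{v_0=z_H,v_1,\ldots,v_{m-1}\}$, fix a shortest path in $H$ from $z_H$ to each $v_i$, let $S_i$ be its label sequence, and set $\phi_i:=f_{S_i}$, which is Borel by Lemma~\ref{lm:fS}.

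The key step is then to characterise $A_H$ as the set of $x\in V$ such that: (i) $\phi_i(x)\in V$ for every $i\in m$; (ii) for all $i,j\in m$, $(\phi_i(x),\phi_j(x))\in E$ if and only if $(v_i,v_j)\in E_H$; and (iii) $\deg_{\C G}(\phi_i(x))=\deg_H(v_i)$ for every $i$ with $\dist_H(z_H,v_i)<r$. The injectivity of $\ell$ on $r$-balls (and of $\ell_H$ on $V_H$) makes the correspondence $v_i\mapsto\phi_i(x)$ automatically injective and label-preserving; condition (ii) makes it edge-preserving; and condition (iii) combined with (ii) forbids any $\C G$-neighbour of $\phi_i(x)$ (for interior $v_i$) that is not of the form $\phi_j(x)$, so by induction on depth the image $\{\phi_i(x):i\in m\}$ equals $N^{\le r}(x)$. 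Condition (i) is Borel by Lemma~\ref{lm:fS}, condition (ii) is the pullback of the Borel set $E\subseteq V^2$ under the Borel map $x\mapsto(\phi_i(x),\phi_j(x))$, and condition (iii) is Borel by Lemma~\ref{lm:deg}. Hence $A_H$ is Borel.

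The main subtlety will be verifying that the three conditions above really pin down the isomorphism type. In particular, condition (iii) is what prevents the $\C G$-ball around $x$ from being strictly \emph{larger} than $H$ at interior vertices (those at depth strictly less than $r$), and the injectivity reduction in the first step is essential so that the walks $f_{S_i}$ serve as unambiguous ``addresses'' for the vertices of the ball.
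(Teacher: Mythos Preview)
Your proof is correct and follows the same overall strategy as the paper: reduce to a labelling that is injective on $r$-balls via a Borel proper colouring of $\C G^{2r}$, fix shortest-path walk functions $\phi_i=f_{S_i}$ from Lemma~\ref{lm:fS}, and characterise membership in $A_H$ by three Borel conditions. The only substantive difference is in your condition (iii): where the paper rules out extra $\C G$-neighbours of interior vertices by intersecting with $f_S^{-1}(\perp)$ over all label sequences $S$ (of walk-length at most $r$) that do not occur as walks from the root in $H$, you instead compare degrees directly via Lemma~\ref{lm:deg}. Given conditions (i) and (ii) and the injectivity of the labelling, the two formulations are equivalent, and your degree version is a clean shortcut that re-uses an already-proved lemma rather than setting up a further countable intersection over illegal label sequences.
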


	\begin{proof} Let the local rule $\RR R$ have radius~$r$. We can additionally assume that $\ell$ is a $2r$-sparse colouring of~$\C G$. Indeed, take any Borel proper vertex colouring $c:V\to\omega$ of $\C G^{2r}$ (which exists by Corollary~\ref{cr:Gr} and Lemma~\ref{lm:CountableCol}), replace $\ell$ by the labelling $(\ell,c):V\to \omega^2$, which maps a vertex $x\in V$ to $(\ell(x),c(x))$,  and update the local rule $\RR R$ to ignore the $c$-component of the labelling.
		
		To prove the lemma, it is enough to show that the function $\RR F_r$ on $V$
		is Borel because each preimage under $\RR R$ is a countable union of some preimages under $\RR F_r$. Note that $\RR F_0$ is Borel since the (countable) vertex partition defined by $\RR F_0$ is the same as the partition defined by the Borel function $\ell$. So assume that $r\ge 1$.
		
		Fix any particular feasible $\RR F_r$-value $\C F$ (a rooted labelled graph with each vertex at distance at most $r$ from the root). By relabelling vertices, assume that the vertex set of $\C F$ is~$k$ with $0$ being the root. Let $\ell':k\to\omega$ be the vertex labelling of~$\C F$. We assume that the function $\ell':k\to\omega$ is injective as otherwise $\RR F_r^{-1}(\C F)$ is empty and thus trivially Borel.

		For each $i\in k$, let $(s_0,\dots,s_j)$ be the sequence of labels on some fixed shortest path $P_i$ from the root $0$ to $i$ in~$\C F$ and let $f_i:=f_{(s_0,\dots,s_j)}$, be the function
		defined before Lemma~\ref{lm:fS} with respect to the labelled graph~$(\C G,\ell)$.

		Observe that, for $x\in V$, the $r$-ball $\RR F_r(x)$ is isomorphic to $\C F$ if and only if
		all the following statements hold:
		\begin{enumerate}[(a),nosep]
			\item\label{it:LocalRuleA} For every $i\in k$, we have $f_i(x)\not=\,\perp$ (that is, $\C G$ has a walk starting from $x$ labelled the same way as~$P_i$ is).  
			\item\label{it:LocalRuleC} For every distinct $i,j\in k$, the pair $(i,j)$ is an edge in $\C F$ if and only if $(f_i(x),f_j(x))$ is an edge of~$\C G$.
			\item\label{it:LocalRuleE} For every $i\in k$ with distance at most $r-1$ from the root in $\C F$ and every $y\in N_{\C G}(f_i(x))$ there is $j\in k$ with $y=f_j(x)$ and $j\in N_{\C F}(i)$.
		\end{enumerate}
		
		Indeed,  the map $h:k\to V$ that sends $i$ to $f_i(x)$ preserves the labels by Property~\ref{it:LocalRuleA} since we considered labelled walks when defining each~$f_i$. Furthermore, $h$ is injective; in fact, even the composition $\ell\circ h$ is injective since the endpoints of the paths $P_i$ have distinct $\ell'$-labels as distinct vertices of~$\C F$. Now, Property~\ref{it:LocalRuleC}
		states that $h$ is a graph isomorphism from $\C F$ to the subgraph induced by $\{f_0(x),\dots,f_{k-1}(x)\}$ in $\C G$. Finally, Property~\ref{it:LocalRuleE} states that 
		the breadth-first search of depth $r$ from $x$ in $\C G$ does not return any vertices not accounted by~$\C F$.
		
		Let $X$ consist of those $x\in V$ that satisfy Property~\ref{it:LocalRuleA}. By Lemma~\ref{lm:fS}, the set $X=\cap_{i\in k} f_i^{-1}(V)$ is Borel.
		
		Note that, for any distinct $i,j\in k$, the set
		\begin{eqnarray*}
			Y_{i,j} &:=& \{x\in X\mid (f_i(x),f_j(x))\in E\}\\
			&=&\pi_0\big(\,\{(x,x_i,x_j)\in X^3\mid x_i=f_i(x)\}\\
			&\cap& \{(x,x_i,x_j)\in X^3\mid \ x_j=f_j(x)\}\ \cap \ (X\times E)\,\big)
		\end{eqnarray*}
		is Borel by Lemma~\ref{lm:BorelF} (applied to the functions $f_i$ and $f_j$) and Theorem~\ref{th:LNU} (applied to the projection $\pi_0$, whose preimages are finite here as the graph $\C G$ is locally finite). Thus the set $Y$ of the elements in $X$ that satisfy Property~\ref{it:LocalRuleC} is Borel since it is the intersection of the sets $Y_{i,j}$ over all edges $\{i,j\}$ of $\C F$ and the sets $X\setminus Y_{i,j}$ over non-edges $\{i,j\}$ of~$\C F$.
		
		Finally, the set $\RR F_r^{-1}(\C F)$ that we are interested in is Borel by Lemma~\ref{lm:fS} as the countable intersection with $Y$ of $f_S^{-1}(\perp)$ over all sequences $S$ of labels of length at most $r$ that do not occur on walks in $\C F$ that start with the root.\end{proof}
	
	\begin{remark}
		\label{rm:Reduction}
		Note that many other types of structures on $\C G$, such as
		a Borel edge labelling $\ell':E\to\omega$, can be encoded via some vertex labelling $\ell$ to be used as the input to the local rule $\RR R$ in Lemma~\ref{lm:LocalRule}. Namely, fix  a Borel 2-sparse colouring $c:V\to\omega$ of $\C G$ and let the label $\ell(x)$ of a vertex $x$ be defined as the finite list $(c(x),\ell'(x,y_0),\dots,\ell'(x,y_{d-1}))$ where 
		$y_0,\dots,y_{d-1}$ are all neighbours of $x$ listed increasingly with respect to their $c$-colours. In fact, the same reduction also works for labellings $\ell':E\to\omega$ that need not be \emph{symmetric} (meaning that $\ell'(x,y)=\ell'(y,x)$ for all $(x,y)\in E$). More generally,
		any countably valued function defined on subsets of uniformly bounded diameter in $\C G$ can be locally encoded by a vertex labelling.
		\ecpf\end{remark}

	In some cases, a single local rule does not work but a good assignment can be found by designing a sequence of local rules of growing radii that eventually stabilise at every vertex. The author is not aware of any commonly used name for functions arising this way so, for the purposes of this paper, we  make up the following name (inspired by the term \emph{finitary factor} from probability). For a labelled graph $(G,\ell)$, let us call a function $\RR R:V\to \omega$  \emph{finitary} (with respect to $(G,\ell)$)  if there are local rules $\RR R_i$, $i\in\omega$, on $(G,\ell)$ such that the sequence of functions $\RR R_i:V\to \omega$ eventually stabilises to $\RR R$ everywhere, that is, for every $x\in V$ there is $n\in \omega$ such that for every $i\ge n$ we have $\RR R(x)=\RR R_i(x)$. Note that we do not require that $\RR R_i$ ``knows'' if its value at a vertex $x$ is the eventual value or not.

	\begin{example}\label{ex:PerfectM}
		Let $c:V\to \omega$ be a 2-sparse colouring of a graph $G$. For $x\in V$ let $\RR R(x)\in \omega$ be $0$ if the component of $x$ has no perfect matching and be the largest $i\in\{1,\dots,\deg(x)\}$ such that the component of $x$ has a perfect matching that matches $x$ with the $i$-th element of $N(x)$ (where we order each neighbourhood by looking at the values of $c$, which are pairwise distinct by the 2-sparseness of $c$). Then $\RR R:V\to \omega$ is finitary on $(G,c)$ as the following local rules $\RR R_r$, $r\in\omega$, demonstrate. Namely, $\RR R_r(x)$ is the largest $i\in \{1,\dots,\deg(x)\}$ such that there is a matching $M$ in $\induced{G}{N^{\le r}(x)}$ that covers every vertex in $N^{\le r-1}(x)$ and matches $x$ to the $i$-th element of $N(x)$; if no such $M$ exists then we let $\RR R_r(x):=0$. Trivially, for every $x\in V$, if we increase $r$ then $\RR R_r(x)$ cannot increase. Thus the values $\RR R_r(x)$, $r\in\omega$, eventually stabilise and, moreover, this final value can be easily shown to be exactly~$\RR R(x)$. On the other hand, the function $\RR R$ is not $r$-local for any $r\in\omega$: e.g.\ a vertex at distance at least $r$ from both endpoints of a finite path cannot decide by looking at distance at most $r$ if a perfect matching exists or not.
	\end{example}
	
	\begin{cor}\label{cr:Finitary} For every locally finite Borel graph $\C G=(V,E,\C B)$ with a Borel labelling $\ell:V\to\omega$, every function $\RR R:V\to\omega$ which is finitary with respect to $(\C G,\ell)$ is Borel.
	\end{cor}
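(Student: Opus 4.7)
The plan is to reduce immediately to Lemma~\ref{lm:LocalRule}. By the definition of a finitary function, we have local rules $\RR R_i : V \to \omega$ on $(\C G, \ell)$ such that for every $x \in V$ the sequence $(\RR R_i(x))_{i \in \omega}$ is eventually constant with limit $\RR R(x)$. Lemma~\ref{lm:LocalRule} already tells us each $\RR R_i$ is a Borel function, so the only thing to verify is that pointwise stabilisation (into a countable target) preserves Borelness.

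To spell this out, fix any $k \in \omega$. The pointwise stabilisation assumption gives
\[
\RR R^{-1}(k) \;=\; \{x \in V \mid \exists\, n \in \omega\ \forall\, i \ge n,\ \RR R_i(x) = k\} \;=\; \bigcup_{n \in \omega}\ \bigcap_{i \ge n}\ \RR R_i^{-1}(k),
\]
which is exactly the set-theoretic $\liminf$ of the sequence of sets $\RR R_i^{-1}(k)$. Each $\RR R_i^{-1}(k)$ lies in $\C B$ by Lemma~\ref{lm:LocalRule}, and $\sigma$-algebras are closed under countable unions and countable intersections (in particular under $\liminf$, as was noted at the start of Section~\ref{se:Background}). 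Hence $\RR R^{-1}(k) \in \C B$. Since $k \in \omega$ was arbitrary and every subset of $\omega$ is a countable union of singletons, the preimage under $\RR R$ of any subset of $\omega$ is Borel, so $\RR R$ is a Borel function.

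There is really no obstacle here: all the combinatorial and descriptive work has been absorbed into Lemma~\ref{lm:LocalRule}, and the passage from ``each $\RR R_i$ Borel'' to ``$\RR R$ Borel'' is the standard $\sigma$-algebra manipulation above, exploiting that $\omega$ is countable so that the values of $\RR R$ can be recovered from stabilisation on each single value~$k$.
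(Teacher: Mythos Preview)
Your proof is correct and essentially identical to the paper's: both invoke Lemma~\ref{lm:LocalRule} to get each $\RR R_i$ Borel, then write $\RR R^{-1}(k)=\bigcup_{n}\bigcap_{i\ge n}\RR R_i^{-1}(k)$ as a $\liminf$ of Borel sets. The paper also mentions the equivalent one-line justification that a pointwise limit of Borel functions is Borel, but your explicit $\sigma$-algebra computation is exactly the alternative argument the paper spells out.
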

	\begin{proof} Fix local rules $\RR R_i:V\to\omega$, $i\in\omega$, that witness that $\RR R$ is finitary. By Lemma~\ref{lm:LocalRule}, each $\RR R_i$ is a Borel function. The function $\RR R:V\to\omega$ is
		Borel since a pointwise limit of Borel functions is Borel (\Co{Proposition 2.1.5}). Alternatively, the last step follows from observing that, for every possible value $j\in \omega$,
		its preimage $\RR R^{-1}(j)
		= \cup_{i\in\omega} \cap_{m\ge i} \RR R_m^{-1}(j)$ is the $\liminf$ (and also, in fact, $\limsup$) of the preimages~$\RR R_i^{-1}(j)$, $i\in\omega$.\end{proof}

	\subsection{Graphs with a Borel Transversal}
	\label{se:Smooth}

	In this section we present one application of Lemma~\ref{lm:LocalRule} (namely, Theorem~\ref{th:Smooth} below) which, informally speaking, says that if we can pick exactly one vertex inside each graph component in a Borel way then any satisfiable locally checkable labelling problem has a Borel satisfying assignment.

	We define
	a \emph{locally checkable labelling problem} 
	(or an \emph{LCL} for short) 
	on labelled graphs to be a $\{0,1\}$-valued local rule $\RT$ that takes as input graphs with $\omega^2$-valued labellings. 
	A labelling $a:V\to \omega$ \emph{satisfies} (or \emph{solves}) the LCL $\RT$ on a labelled graph $(G,\ell)$ if $\RT$ returns value $1$ on every vertex when applied to the labelled graph $(G,(\ell,a))$. (Recall that the labelling $(\ell,a):V\to \omega^2$ labels a vertex $x$ with $(\ell(x),a(x))$.)
	We will identify the function $\RT(G,(\ell,a)):V\to\{0,1\}$ with the set of $x\in V$ for which it assumes value $1$ and, if the ambient labelled graph is understood, abbreviate this set to $\RT(a)$. Thus $a$ satisfies $\RT$ if and only if $\RT(a)=V$. 
	We call the requirement that a vertex $x$ belongs to $\RT(\cdot)$ the \emph{constraint at~$x$}. Labellings $a:V\to \omega$ on which we check the validity of an LCL will usually be called \emph{assignments}.

	Let us give a few examples of LCLs.
	First, checking that $a:V\to \omega$ is a proper colouring can be done by the 1-local rule which returns 1 if and only if the colour of the root is different from the colour of any of its neighbours (and that the colour of the root is in the set $n=\{0,\dots,n-1\}$ if we additionally want to require that $a$ uses at most $n$ colours). Note that the LCL ignores the labelling $\ell$ in this example.
	Second, suppose that a labelling $a$ encodes some edge colouring $c:E\to\omega$ as in Remark~\ref{rm:Reduction}. Then a 2-local rule can check if $c$ is proper: each vertex $x$ checks that all colours on ordered pairs $(x,y)\in E$ are pairwise distinct and that  $c(x,y)=c(y,x)$ for every $y\in N(x)$; for this the knowledge of $
	N^{\le 2}(x)$ is enough. Another example is checking that $a:V\to\{0,1\}$ is the indicator function of a maximal independent set: this can be done by the obvious 1-local rule.
	
	Let $\C G$ be a Borel graph. A \emph{transversal} for $\C G$ is a subset $X\subseteq V$ such that $X$ has exactly one vertex from every connectivity component of $\C G$ (i.e.\ $|X\cap [x]\OptionalCG |=1$ for every $x\in V$).  In the language of Borel equivalence relations, the existence of a Borel transversal can be shown (see e.g.~\cite[Proposition 6.4]{KechrisMiller:toe} or~\Ts{Proposition 20.5}) to be equivalent to the statement that the connectivity relation $\C E\OptionalCG $ of $\C G$ (which is Borel by Corollary~\ref{cr:CEBorel}) is \emph{smooth}, meaning that there is a countable family of Borel sets $Y_n\subseteq V$, $n\in \omega$,
	such that, for all $x,y\in V$, we have $(x,y)\in \C E\OptionalCG $ if and only if every $Y_n$ contains either both or none of~$x$ and $y$. 
	
	Here is an example of a large class of Borel graphs that have a Borel transversal.
	
	\begin{lemma}\label{lm:FinCompSmooth}
		Every (locally finite) Borel graph $\C G$ with all components finite  admits a Borel transversal.
	\end{lemma}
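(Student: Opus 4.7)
The plan is to pick in each component the $\preceq$-minimum vertex, where $\preceq$ is the Borel total order on $V$ provided by Lemma~\ref{lm:I}. Since every component $[x]_{\C G}$ is finite, it has a unique $\preceq$-minimum, so the resulting set $X:=\{x\in V\mid \forall y\in [x]_{\C G}\setminus\{x\},\ x\prec y\}$ will be a transversal by construction. The only thing to verify is that $X$ is a Borel subset of $V$.

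I would show this by writing the complement as the image of a Borel set under a Borel map with finite (hence countable) fibres, so that the Lusin--Novikov Uniformization Theorem (Theorem~\ref{th:LNU}) applies. Explicitly, set
\[
A:=\C E_{\C G}\cap\{(x,y)\in V^2\mid y\prec x\}.
\]
The connectivity relation $\C E_{\C G}$ is Borel by Corollary~\ref{cr:CEBorel}, and $\{(x,y)\mid y\prec x\}$ is Borel by Lemma~\ref{lm:I}, so $A\in\C B(V^2)$.

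Now $x\in V\setminus X$ iff there exists $y\in [x]_{\C G}$ with $y\prec x$, so $V\setminus X=\pi_0(A)$. For each $x\in V$, the fibre $\pi_0^{-1}(x)\cap A=\{x\}\times\{y\in[x]_{\C G}\mid y\prec x\}$ is finite because $[x]_{\C G}$ is finite by hypothesis. Hence Theorem~\ref{th:LNU} yields that $\pi_0(A)$ is Borel, and therefore $X=V\setminus \pi_0(A)$ is Borel as required.

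There is no real obstacle; the one thing to notice is that local finiteness is used only indirectly (through Corollary~\ref{cr:CEBorel} giving Borelness of $\C E_{\C G}$), whereas the essential ingredient is that each component is finite, which ensures both the existence of a $\preceq$-minimum and the finiteness of the fibres of $\pi_0$ on $A$ so that Lusin--Novikov applies.
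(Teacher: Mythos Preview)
Your proof is correct, and the transversal you construct (the $\preceq$-minimum in each finite component) is exactly the one the paper uses. The two arguments diverge only in how Borelness of $X$ is verified. The paper writes $X=\bigcup_{r\in\omega} X_r$, where $X_r$ collects the $\preceq$-minimal vertices of components of diameter at most~$r$, and appeals to Lemma~\ref{lm:LocalRule} to see each $X_r$ is Borel (its indicator being computable by an $(r+1)$-local rule). Your route via $V\setminus X=\pi_0\bigl(\C E_{\C G}\cap\{(x,y)\mid y\prec x\}\bigr)$ and Theorem~\ref{th:LNU} is more self-contained: it sidesteps the local-rule machinery entirely, needing only Corollary~\ref{cr:CEBorel}, Lemma~\ref{lm:I}, and the finiteness of components (which gives finite fibres directly). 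The paper's approach, by contrast, serves to illustrate the local-rule paradigm developed in Section~\ref{se:Local}, at the cost of a slightly heavier dependency.
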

	
	\begin{proof} Fix any Borel total order $\preceq$ on the vertex set $V$ which exists by Lemma~\ref{lm:I} and define a transversal $X$ by picking the $\preceq$-smallest element in each component. Note that $X$ is the countable union of the sets $X_r$, $r\in\omega$, where $X_r$ consists of the $\preceq$-smallest vertices inside connectivity components of diameter at most~$r$. Each $X_r$ is Borel by Lemma~\ref{lm:LocalRule} as its indicator function can be computed by an $(r+1)$-local rule. 
		Thus the constructed transversal $X$ is also Borel. \end{proof}

	Having a Borel transversal is a very strong property from the point of view of Borel combinatorics as the forthcoming results state. The first one is a useful auxiliary lemma stating, informally speaking, that if we can pick exactly one vertex in each component in a Borel way then we can enumerate each component in a Borel way. 
	
	\begin{lemma}\label{lm:SmoothOrdering}
		Let  $\C G=(V,E,\C B)$ be a locally finite Borel graph, admitting a Borel transversal~$X\subseteq V$. Then there are Borel functions $g_i:X\to V$, $i\in\omega$, such that $g_0$ is the identity function on $X$ and, for every $x\in X$, the sequence $(g_i(x))_{i\in m}$ bijectively enumerates $[x]_{\C G}$ and satisfies $\dist_{\C G}(x,g_i(x))\le \dist_{\C G}(x,g_j(x))$ for all $i<j$ in~$m:=|[x]_{\C G}|$.\end{lemma}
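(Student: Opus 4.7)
The plan is to define, for each $x\in X$, a canonical breadth-first enumeration of the component $[x]_{\C G}$ in which ties among vertices at a given BFS level are broken by the $c$-label signature of their canonical BFS walk from $x$, and then to verify Borelness via the local-rule framework of Section~\ref{se:Local}.

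\smallskip
\emph{Setting up the enumeration.}
Apply Lemma~\ref{lm:CountableCol} to the locally finite Borel graph $\C G^2$ (which is Borel by Corollary~\ref{cr:Gr}) to obtain a Borel $2$-sparse vertex colouring $c:V\to\omega$ of~$\C G$, so that the neighbours of every vertex carry pairwise distinct $c$-labels and hence any walk of $\C G$ starting from a prescribed vertex is uniquely reconstructible from its sequence of $c$-labels. For $x\in V$ and $y\in [x]_{\C G}$, define the \emph{signature} $\sigma_x(y)\in\omega^{<\omega}$ to be the lexicographically smallest $c$-label sequence of a shortest walk in $\C G$ from $x$ to~$y$ (finitely many such walks exist, and by the uniqueness observation $\sigma_x$ is injective on each BFS level). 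Let $<_x$ be the total order on $[x]_{\C G}$ obtained by sorting lexicographically by $(\dist(x,\cdot),\sigma_x(\cdot))$; for $x\in X$ and $i\in\omega$, set $g_i(x)$ to be the $\min(i+1,|[x]_{\C G}|)$-th element of $([x]_{\C G},<_x)$. Then $g_0$ is the identity on~$X$, and for each $x\in X$ the sequence $(g_i(x))_{i<|[x]_{\C G}|}$ bijectively enumerates $[x]_{\C G}$ in non-decreasing order of distance from~$x$, so the combinatorial requirements hold.

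\smallskip
\emph{Borelness via local-rule approximations.}
Fix $i\in\omega$. For each $r\in\omega$, define an $r$-local rule $\RR R_{i,r}$ on $(\C G,c)$ as follows: at a vertex $x\in V$, inspect the rooted labelled subgraph $(\induced{\C G}{N^{\le r}(x)},\,\induced{c}{N^{\le r}(x)},\,x)$, compute distances from $x$ inside this ball, compute (in-ball) signatures of ball vertices via lex-smallest $c$-label sequences of shortest in-ball walks from~$x$, and output the signature of the $\min(i+1,|N^{\le r}(x)|)$-th vertex in the resulting lex-order by (distance, signature). Because every shortest walk of $\C G$ whose endpoints are at distance at most $r$ from $x$ stays inside $N^{\le r}(x)$, the distances and signatures computed inside the ball agree with those computed in $\C G$. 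Consequently, as $r$ grows the outputs $\RR R_{i,r}(x)$ stabilise to $\sigma_x(g_i(x))$ (once $r\ge\dist(x,g_i(x))$, or once $N^{\le r}(x)=[x]_{\C G}$ in the case $|[x]_{\C G}|\le i$). Identifying $\omega^{<\omega}$ with $\omega$ via a bijection, Corollary~\ref{cr:Finitary} shows that $\widetilde g_i(x):=\sigma_x(g_i(x))$ is a Borel function $V\to\omega^{<\omega}$.

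\smallskip
\emph{Finishing the proof and the main obstacle.}
Lemma~\ref{lm:fS} gives that each $f_S:V\to V\cup\{\perp\}$ is Borel, and by construction $g_i(x)=f_{\widetilde g_i(x)}(x)$ for all $x\in X$. Therefore, for every Borel $B\subseteq V$,
\[
\{x\in X\mid g_i(x)\in B\}=\bigcup_{S\in\omega^{<\omega}}\bigl(X\cap\widetilde g_i^{-1}(S)\cap f_S^{-1}(B)\bigr)
\]
is a countable union of Borel sets, so $g_i\upharpoonright X$ is Borel. The main obstacle is the choice of labelling and tie-breaking: one needs a Borel label on $V$ with the twin properties that (i) the BFS order restricted to the $r$-ball agrees with the global BFS order on its initial segment once $r$ is large enough, and (ii) each vertex in a component is named by a canonical finite word that is locally reconstructible inside a ball. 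The $2$-sparse colouring $c$ supplies both, because $2$-sparseness puts each walk of $\C G$ in bijection with its own sequence of $c$-labels and therefore makes signatures computable from purely local information.
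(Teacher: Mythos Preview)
Your construction coincides with the paper's: fix a $2$-sparse Borel colouring~$c$, enumerate each component by BFS from its transversal vertex with ties broken by the lex-smallest $c$-label sequence of a shortest path, and recover $g_i$ via the functions~$f_S$ of Lemma~\ref{lm:fS}. The only difference is in the Borelness check---the paper argues directly by induction on~$i$, partitioning $X$ into the Borel pieces $X_S\setminus\bigcup_{R\prec S}X_R$ (where $X_S:=f_S^{-1}(V\setminus\bigcup_{j<i}g_j(X))\cap X$) according to which sequence~$S$ realises~$g_i$, whereas you route through Corollary~\ref{cr:Finitary} by exhibiting the signature map~$\widetilde g_i$ as a finitary limit of local rules; both arguments are valid.
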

	
	\begin{proof} 
		Fix a 2-sparse Borel colouring $c:V\to\omega$ which exists by Corollary~\ref{cr:RSparse}.
		
		The main idea of the proof is very simple: for each selected vertex $x\in X$ 
		we order the vertices in the connectivity component of $x$ first by the distance to $x$ and then by the $c$-labellings of the shortest paths from $x$ to them, and  let $g_i(x)$ be the $(i+1)$-st vertex in this order on~$[x]\OptionalCG $.

		Formally, let $\preceq$ be the total ordering of  $\omega^{<\omega}$, the set of all finite sequences of non-negative integers, first by the length and then lexicographically. Note that $\preceq$ is a \emph{well-order} on  $\omega^{<\omega}$ (that is, every non-empty subset of $\omega^{<\omega}$ has the $\preceq$-smallest element). For $S\in\omega^{<\omega}$, let $f_{S}:V\to V\cup\{\perp\}$ be the function defined before Lemma~\ref{lm:fS} (which sends $x\in V$ to the other endpoint of the $S$-coloured walk starting at $x$, if it exists). By Lemma~\ref{lm:fS}, each function $f_{S}$ is Borel. 

		We define functions $g_i$ inductively, with $g_0:=\induced{f_{()}}{X}$ being the identity function on~$X$. Suppose that $i\ge 1$. 
		For $x\in V$, let $g_i(x)$ be equal to $f_{S}(x)$ where $S\in\omega^{<\omega}$ is the $\preceq$-smallest sequence with $f_{S}(x)\in [x]\OptionalCG \setminus\{g_0(x),\dots,g_{i-1}(x)\}$ and let $g_i(x):=x$ if no such $S$ exists (i.e.\ if we have already exhausted the whole component of~$x$).
		
		Let us argue by induction on $i\in\omega$ that the function $g_i$ is Borel. As $g_0$ is clearly Borel, take any $i\ge 1$. For $S\in\omega^{<\omega}$, let $X_S:=f_{S}^{-1}(V\setminus \cup_{j\in i} g_j(X))\cap X$.
		For any given $S\in\omega^{<\omega}$, the set of $x\in X$ for which we use $f_S$  when defining $g_i(x)$ is exactly $X_S\setminus (\cup_{R\prec S} X_{R})$, since we have to exclude the already labelled vertices $g_0(x),\dots,g_{i-1}(x)$ and then any $R\prec S$ as it takes precedence over~$S$. Each set $X_S$ for $S\in\omega^{<\omega}$ is Borel by Theorem~\ref{th:LNU} and Lemma~\ref{lm:fS}. It follows that the function $g_i$ is Borel.
		The other claimed properties of the constructed functions $g_i$ are obvious from the definition.\end{proof}

	\begin{theorem}\label{th:Smooth} Let  $\C G=(V,E,\C B)$ be a locally finite Borel graph, admitting a Borel transversal~$X\subseteq V$. 
		Let $\ell:V\to \omega$ be a Borel labelling, $n\in \omega$ and $\RT$ be an LCL. 
		If there is an assignment $V\to n$ that solves $\RT$  on $(\C G,\ell)$,
		then there is a Borel assignment $V\to n$ that solves~$\RT$ on $(\C G,\ell)$.\end{theorem}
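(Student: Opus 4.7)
The plan is to construct a canonical lex-least satisfying assignment on each component and verify that this choice is Borel. The Borel transversal $X$ reduces the task to a single choice per component, while the enumeration from Lemma~\ref{lm:SmoothOrdering} supplies the order needed to define ``lex-least''; the subtlety is that components may be infinite, so lex-leastness must be justified through a K\"onig's-lemma argument.

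First, apply Lemma~\ref{lm:SmoothOrdering} to obtain Borel functions $g_i : X \to V$, $i \in \omega$, enumerating each component bijectively from its representative. Each $g_i$ is an injection (distinct representatives lie in distinct components), so by Theorem~\ref{th:LNU} every $g_i(X)$ is Borel, and the $g_i(X)$ partition $V$. Hence the maps $q : V \to X$ (sending $v$ to the representative of its component) and $\iota : V \to \omega$ (giving the unique $i$ with $v = g_i(q(v))$) are Borel, since $q^{-1}(Y) = \bigcup_i g_i(Y)$ and $\iota^{-1}(i) = g_i(X)$.

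Let $r \in \omega$ be the radius of the LCL $\RT$. For each $x \in X$, form the finitely branching tree $T_x$ whose level-$N$ nodes are \emph{locally consistent} partial assignments $\sigma : \{g_0(x), \dots, g_{N-1}(x)\} \to n$, meaning that $\RT$ holds at every $g_j(x)$ with $j < N$ whose full $r$-ball in $\C G$ is contained in the domain of $\sigma$. The infinite branches of $T_x$ correspond exactly to the assignments on $[x]_{\C G}$ that satisfy $\RT$, since each local constraint is eventually checkable along any branch. By hypothesis there exists a satisfying assignment on $\C G$, so $T_x$ has at least one infinite branch; let $b^\ast(x) \in n^\omega$ be its lex-least infinite branch, and set $a^\ast(v) := b^\ast(q(v))_{\iota(v)}$.

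The main obstacle is to verify that $b^\ast$, and hence $a^\ast$, is Borel. Induct on $i$: $b^\ast(x)_i$ is the least $k \in n$ such that the prefix $(b^\ast(x)_0, \dots, b^\ast(x)_{i-1}, k)$ extends to an infinite branch of $T_x$. By K\"onig's lemma applied to the finitely branching $T_x$, this extendability is equivalent to the countable conjunction ``for every $N \ge i + 1$, some node of $T_x$ at level $N$ extends the given prefix''. For fixed prefix and $N$, this is a finite combinatorial condition on the rooted labelled ball of radius $N - 1 + r$ around $x$ in $(\C G, \ell)$, and so defines a Borel subset of $X$ by Lemma~\ref{lm:LocalRule}. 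Taking the countable intersection over $N$ preserves Borelness, so inductively $x \mapsto b^\ast(x)_i$ is Borel. Finally, $(a^\ast)^{-1}(k) = \bigcup_i g_i\bigl(\{x \in X : b^\ast(x)_i = k\}\bigr)$ is Borel by Theorem~\ref{th:LNU}, finishing the argument.
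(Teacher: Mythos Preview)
Your argument is sound and reaches the same destination as the paper's proof---the lex-least satisfying assignment on each component, ordered via the enumeration from Lemma~\ref{lm:SmoothOrdering}---but by a slightly different technical route. The paper defines explicit local rules $\RR A_r$ of increasing radius (each vertex locates its transversal element, takes the lex-smallest assignment on a suitable ball, and reports its own value), proves by a direct lex-ordering induction that these rules stabilise pointwise, and then invokes Corollary~\ref{cr:Finitary}. You instead define the target assignment directly as the lex-least infinite branch of a tree of locally consistent partial assignments and use K\"onig's lemma to rewrite ``extends to an infinite branch'' as a countable conjunction of local conditions, verifying Borelness coordinate by coordinate. Your route is arguably tidier---K\"onig's lemma absorbs the stabilisation argument in one stroke---while the paper's version stays closer to the finitary-rule machinery developed in the section.

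Two small repairs are needed before this compiles into a proof. First, Lemma~\ref{lm:SmoothOrdering} sets $g_i(x)=x$ once a finite component is exhausted, so the $g_i(X)$ do not partition $V$ and the index $i$ with $v=g_i(q(v))$ is not unique; take $\iota(v)$ to be the \emph{least} such $i$ (equivalently, replace $g_i(X)$ by $g_i(X)\setminus\bigcup_{j<i}g_j(X)$). Second, when you claim the level-$N$ extendability condition is a local rule ``for fixed prefix'', note that the prefix depends on $x$ through the inductively constructed values $b^\ast(x)_0,\dots,b^\ast(x)_{i-1}$; to apply Lemma~\ref{lm:LocalRule} you should enrich the labelling to include $\iota$, $\I 1_X$, and these Borel functions (extended trivially off $X$), after which the condition is genuinely an $(N{-}1{+}r)$-local rule on the enriched labelled graph. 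With these adjustments the argument goes through.
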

	
	\begin{proof} As in the proof of Lemma~\ref{lm:LocalRule}, by replacing $\ell$ with $(\ell,d)$ for some 2-sparse Borel colouring $d$ (and letting the updated LCL $\RT$ ignore the $d$-component), we can additionally assume that $\ell$ is a 2-sparse colouring of~$\C G$. 
		Let $g_i:X\to V$ for $i\in\omega$ be the Borel functions returned by Lemma~\ref{lm:SmoothOrdering}.

		Since we need to use these functions $g_i$ as inputs to in our local rules, we encode them by a vertex labelling~$I$ as follows. Namely, we define $I:V\to \omega$ to map $y\in V$ to the smallest $i\in\omega$ with $y\in g_i(X)$. Thus $I$ bijectively enumerates each component of $\C G$ by an initial interval of~$\omega$.
		This function is Borel by Theorem~\ref{th:LNU} since $I^{-1}(0)=X$ and
		$$
		I^{-1}(i)=g_i(X)\setminus \{x\in X\mid |[x]\OptionalCG|\le i\},\quad \mbox{for each }i\ge 1.
		$$ 
		(Note that, for every $i\in\omega$, 
		the set of $x\in X$ whose connectivity component has at most $i$ vertices  is Borel by Lemma~\ref{lm:LocalRule} since its indicator function can be computed by an $i$-local rule on $(\C G,\I 1_X)$.)

		Let $\rho$ be the radius of the local rule~$\RT$.
		
		For every $r\in\omega$, we define an $r$-local rule $\RR A_r:V\to \{-1\}\cup n$
		which on $(\C G,(\ell,I))$ works as follows. Given $y\in V$, explore
		$N^{\le r}(y)$, the $r$-ball around $y$. If it contains no vertex of $X$, then let $\RR A_r(y):=-1$ (which could be interpreted that the vertex $y$ does not yet make any guess of its value in the set~$n$). 
		Otherwise, let $x$
		be the (unique) vertex from $X$ that we have encountered. Let 
		$$
		k=k(r,y):=r-\dist(x,y)\mbox{\ \  and\ \  }Y=Y(r,y) :=N^{\le k}(x).
		$$
		If $y\not\in Y$, then we define $\RR A_r(y):=-1$. Suppose that $y\in Y$.
		Note that $Y=\{y_0,\dots,y_{m-1}\}$, where $m:=|Y|$ and $y_i$ for $i\in m$ is defined as the unique vertex of $Y$ labelled $i$ by $I$ (which is, of course,~$g_i(x)$). Since $Y\subseteq N^{\le r}(y)$, the local rule $\RR A_r(y)$ can identify all these vertices.
		Among all assignments $A:Y\to n$, pick one, denoting it $A=A(r,y)$, that satisfies $\RT$ on every vertex of 
		$$
		Y'=Y'(r,y):=N^{\le k-\rho}(x)
		$$ 
		and, if there is more than one choice then choose the one 
		for which the sequence $(A(y_0),\dots,A(y_{m-1}))$ is lexicographically smallest.
		(Note that there is always at least one choice of $A$ by our assumption that a global solution exists.) Finally, let $\RR A_r(y):=A(y)$, be the value of $A$ on the vertex $y\in Y$. Note that in order to compute $A$ (given $y_i$'s), we need to know only the labelled graph induced by $N^{\le \rho}(Y')\subseteq Y\subseteq N^{\le r}(y)$. Thus $\RR A_r$ is indeed a local rule of radius~$r$.
		
		Informally speaking, these rules $\RR A_r$ are constructed so that each vertex $x$ from the  transversal $X$ takes growing balls around itself and properly labels each with the lexicographically smallest assignment that looks satisfiable, given the current local information. 
		Every other vertex $y$ in the component of $x$ has to follow these choices once $y$ discovers enough information to compute $x$'s choice for the value at~$y$.
		
		Let us show that the values of $\RR A_r$, $r\in\omega$, eventually stabilise to some element of $n$ on each vertex~$y\in V$, by using induction on $I(y)$. Take any $x\in X$. Define $y_i:=g_i(x)$ for $i\in \omega$. Note that $y_0$ is the special vertex $x\in X$. Take any $i\in\omega$. First, observe that, trivially, $\RR A_r(y_i)\not=-1$ for all $r\ge 2\,\dist(y_0,y_i)+\rho$. By induction pick $r_0\in\omega$ such that $\RR A_r$ stabilises from $r_0$ on each of $y_0,\dots,y_{i-1}$, that is, for all $r\ge r_0$ the restrictions of $\RR A_r$ to $\{y_0,\dots,y_{i-1}\}$ are equal to each other. Consider the values $\RR A_r(y_i)$ for $r\ge r_0'$ where 
		$$
		\mbox{$r_0':=r_0+d$ and $d:=\max\{\dist(y_i,y_j)\mid j\in i\}$.}
		$$ 
		Using the notation from the definition of $\RR A_r$, it holds that $k(r,y_i)=r-\dist(y_0,y_i)$ is at least as large as $k(r_0,y_j)=r_0-\dist(y_0,y_j)$ for each $j\in i$. Thus, for every $j\in i$, we have that $Y'(r,y_i)\supseteq Y'(r_0,y_j)$, as these sets are just balls around the special vertex $x=y_0$ of radii $k(r,y_i)-\rho$ and $k(r_0,y_j)-\rho$ respectively.
		Thus, when we compute $A=A(r,y_i)$, the constraints that this assignment has to satisfy include all constraints for~$A(r_0,y_j)$. In the other direction, it holds for all $j\in i$ again by our choice of $d$ that $Y'(r+d,y_j)\supseteq Y'(r,y_i)$, that is, $A(r+d,y_j)$ has to satisfy all constraints that are imposed on~$A(r,y_i)$. Since we always go for the lexicographically minimal assignment, we conclude that $A(r,y_i)$ coincides with $\RR A_{r_0}$ on $\{y_0,\dots,y_{i-1}\}$. 
		Thus, for all $r\ge r_0'$, when we compute $A(r,y_i)$, we can equivalently view its values on $\{y_0,\dots,y_{i-1}\}$ as fixed and, given this, we minimise the value at~$y_i$. This value cannot decrease when we increase $r$ (because  any increase of the radius $r$ just adds some extra constraints on $A(r,y_i)$). So the values at each $y\in V$ eventually stabilise as they come from the finite set $n$, as desired.

		We define the final assignment $a$ as the one to which the local rules $\RR A_r$, $r\in\omega$, stabilise. It is finitary and thus Borel by Corollary~\ref{cr:Finitary}.

		It remains to check that the constructed assignment $a:V\to n$ solves the LCL~$\RT$. Take any~$y\in V$. Its $\rho$-ball $Z:=N^{\le \rho}(y)$ is finite and thus there is $r_0\in\omega$ such that $\induced{a}{Z}=\induced{\RR A_r}{Z}$ for each $r\ge r_0$. Let $x$ be the unique element of $[y]\OptionalCG \cap X$. Take any 
		$$
		r\ge \max(r_0,2\,\dist(x,y))+\rho.
		$$ 
		When we compute $\RR A_r(y)$, we have that $k(r,y)=r-\dist(x,y)$ is at least $\dist(x,y)+\rho$, so $y\in Y'(r,y)$, that is, the constraint at $y$ is one of the constraints that the partial assignment $A(r,y)$ has to satisfy. For each element $z\in N^{\le\rho}(y)$, we have $k(r,y)\ge k(r_0,z)$ and thus $Y(r,y)\supseteq Y(r_0,z)$. Recall that the final assignment $a$ coincides with $A(r,y)$ on $N^{\le \rho}(y)=Z$ by the choice of~$r_0$. Since the LCL $\RT$ of radius $\rho$ is satisfied by $A(r,y)$ at $y\in Y'(r,y)$, the assignment $a$ also satisfies the constraint at~$y$. As $y\in V$ was arbitrary, the constructed assignment $a$ solves the problem~$\RT$.\end{proof}
	
	\hide{
		Locally countable Borel graphs are somewhere halfway between locally finite Borel graphs and CBERs.  Some of the results that we presented here may be extended to locally countable graphs. For example, 
		Conley and Miller~\cite[Proposition~1]{ConleyMiller16mrl} proved that for countably local Borel graphs with a Borel transversal their chromatic number and Borel chromatic number coincide, which is extension of the special case (proper vertex colourings) of Theorem~\ref{th:Smooth}.
	}

	\begin{remark} Bernshteyn~\cite{Bernshteyn20u}  gave an example showing that Theorem~\ref{th:Smooth} is false when $n=\omega$, that is, when we consider assignments $V\to\omega$ that can assume can assume infinitely many values. (Also, Bernshteyn~\cite{Bernshteyn20u} presents an alternative proof of Theorem~\ref{th:Smooth}, via the Uniformization Theorem for compact preimages.)\end{remark}

	\subsection{Borel Assignments without Small Augmenting Sets}
	\label{se:ElekLippner}

	Suppose that we look for Borel assignments $a:V\to\omega$ on a labelled graph $(G,\ell)$ that solve an LCL $\RT$ where we want  many vertices to satisfy another LCL~$\RR P$.
	As before, we denote the set of vertices where $a$ 
	satisfies $\RR P$ by $\RR P(G,(\ell,a))$ (or by $\RR P(a)$ if the context is clear), calling it the \emph{progress set}.
	For an assignment $a$ that solves the LCL $\RT$, an \emph{$r$-augmenting set}
	is a set $R\subseteq V$ such that $R$ is \emph{connected} (meaning that $\induced{G}{R}$ is connected), $|R|\le r$, and  there is an assignment $b:R\to\omega$ such that $\replace{a}{b}$ solves $\RT$ and satisfies $\RR P(G,(\ell,a))\subsetneq \RR P(G,(\ell,\replace{a}{b}))$. Here, 
	\beq{eq:Replace}
	\replace{a}{b}:=b\cup (\induced{a}{(\pi_0(a)\setminus \pi_0(b))}
	\eeq
	denotes the function obtained from putting the functions  $a$ and $b$ together, with $b$ taking preference on their common domain $\dom(a)\cap \dom(b)$. 
	We call the new assignment $\replace{a}{b}$ an \emph{$r$-augmentation} of~$a$.
	Thus, an $r$-augmentation strictly increases the progress set by changing the current assignment on a connected set of at most $r$ vertices without violating any constraint of~$\RT$.

	As an example, suppose that $\RT$ states that $a$ encodes a set $M$ of edges which is a matching (that is, $\Delta(M)\le 1$) and $\RR P(x)=1$ means that a vertex $x\in M$ is matched by $M$. Similarly to Remark~\ref{rm:Reduction}, one can encode a matching $M$ by letting $a(x):=\deg(x)$ if $x$ is unmatched and otherwise letting $a(x)\in \deg(x)$ specify the unique $M$-match of $x$ by its position in $N(x)$ with respect to  the ordering of $N(x)$ coming from a fixed $2$-sparse Borel colouring. Then the LCL $\RT$ can be realised by a $2$-local rule (checking that the value at $x$ is consistent with the value at each $y\in N(x)$) while $\RR P$ is the 1-local rule that outputs 0 at $x$ if and only if $a(x)=\deg(x)$. Note that if we change the encoding so that $a(x)=0$ means that $x$ is unmatched while $a(x)\in \{1,\dots,\deg(x)\}$ encodes the $M$-match of $x$ otherwise, then the progress function $\RR P$, which verifies that $a(x)=1$, becomes 0-local. 
	One special example of an augmentation here is to replace $M\subseteq E$ by the symmetric difference
	$M\bigtriangleup P$, where $P\subseteq E$ is a (usual) augmenting path (that is, a path in $G$ whose endpoints are unmatched and whose edges alternate between $E\setminus M$ and $M$). Under either of the above encodings, the set $V(P)$ is augmenting here. 

	The following standard result states that, informally speaking, we can eliminate all $r$-augmen\-ta\-tions in a Borel way, additionally including a Borel ``certificate'' that  we changed at most $r$ assignment values per every vertex added to the progress set.
	Elek and Lippner~\cite{ElekLippner10} presented a version of it for matchings (when $\RT$ checks that the current assignment encodes a matching $M$ and augmenting sets are limited to augmenting paths). Their proof extends with obvious modifications to the general case and is presented here.

	\begin{theorem}
		\label{th:ElekLippner}
		Let $\RT$ and $\RR P$ be LCLs for labelled graphs. 
		Let $\C G$ be a locally finite Borel graph with a Borel labelling $\ell:V\to\omega$ and a Borel assignment $a_0:V\to\omega$ that solves~$\RT$ on~$(\C G,\ell)$. Then for every $r\ge 1$ there is a Borel assignment $a:V\to\omega$ such that $a$ solves~$\RT$  on~$(\C G,\ell)$ and admits no $r$-augmentation with respect to~$\RR P$. Additionally, there are $r$ Borel maps 
		$$f_j: \RR P(\C G,(\ell,a))\setminus \RR P(\C G,(\ell,a_0))\to V,\quad j\in r,
		$$ 
		such that $f_j\subseteq \C E_{\C G}$ for each $j\in r$ and every vertex $x\in V$ with $a(x)\not=a_0(x)$ is in the image of at least one~$f_j$.
	\end{theorem}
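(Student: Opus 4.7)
The plan is to construct $a$ as the pointwise limit of a sequence of Borel assignments $a_0, a_1, \dots$, obtained by iteratively applying maximal parallel batches of non-interfering $r$-augmentations. Let $\rho$ be the maximum of the radii of $\RT$ and $\RR{P}$, and set $d := 2(r+\rho)$. Using Corollary~\ref{cr:Gr} and Lemma~\ref{lm:CountableCol}, fix a Borel proper colouring $c : V \to \omega$ of $\C G^d$ (so $c$ is $d$-sparse in $\C G$, hence injective on any set of $\C G$-diameter $< d$); let $\ell' := (\ell, c)$. Enumerate the countably many ``augmentation templates'' $T_0, T_1, \dots$, where each $T_n$ is a rooted isomorphism type of a connected $\ell'$-labelled graph on $k_n \le r$ vertices together with a prescribed update $b : k_n \to \omega$. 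Fix a map $\pi : \omega \to \omega$ visiting each index infinitely often.

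At round $n$, Lemma~\ref{lm:LocalRule} gives a Borel set $W_n \subseteq V$ of vertices $x$ that are roots of $r$-augmenting sets for $a_n$ realising $T_{\pi(n)}$. The subgraph $\induced{\C G^d}{W_n}$ is a locally finite Borel graph (Corollaries~\ref{cr:Gr} and~\ref{cr:SubB}), so Theorem~\ref{th:MaxInd} yields a Borel maximal independent set $X_n \subseteq W_n$ in it (elements of $X_n$ are pairwise at $\C G$-distance $> d$). For each $x \in X_n$, the template together with the $c$-labelling determine a unique realisation $R_x \subseteq V$ rooted at $x$, with a unique update $b_x : R_x \to \omega$; define $a_{n+1}$ by replacing $a_n$ with $b_x$ on each $R_x$. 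The choice of $d$ gives $\dist_{\C G}(R_x, R_{x'}) > 2\rho$ for distinct $x, x' \in X_n$, so these parallel updates do not interfere. Consequently $a_{n+1}$ is a Borel assignment solving $\RT$ with $\RR{P}(a_n) \subseteq \RR{P}(a_{n+1})$, strictly whenever $X_n \neq \emptyset$.

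The main obstacle is to prove convergence of the $a_n$ at every vertex together with the elimination of all $r$-augmentations in the limit. For convergence at $y \in V$: whenever $a_{n+1}(y) \neq a_n(y)$, the unique $x \in X_n$ with $y \in R_x$ (uniqueness from $d$-sparsity) yields some newly-progressed vertex $y'(n) \in N^{\le \rho}_{\C G}(R_x) \cap (\RR{P}(a_{n+1}) \setminus \RR{P}(a_n))$ at $\C G$-distance $\le r + \rho$ from $y$. By monotonicity of $\RR{P}$ along the process, the map $n \mapsto y'(n)$ is injective into the finite set $N^{\le r+\rho}_{\C G}(y)$, so $a_n(y)$ stabilises after finitely many changes. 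Define $a(y) := \lim_n a_n(y)$, which is Borel as the pointwise limit of Borel functions, and note that $a$ solves $\RT$ because every $\rho$-ball is finite and eventually stable. For elimination, suppose $(R, b)$ were an $r$-augmentation for $a$ of template type $T_k$ rooted at $x_0$. For all sufficiently large $n$, the finite set $N^{\le d+r}_{\C G}(x_0)$ is stable (i.e.\ $a_n = a$ there), so $(R, b)$ is an $r$-augmentation for $a_n$ too, whence $x_0 \in W_n$; choosing such an $n$ with $\pi(n) = k$, any $x \in X_n \cap N^{\le d}_{\C G}(x_0)$ would have $R_x \subseteq N^{\le d+r}_{\C G}(x_0)$, and the (necessarily non-trivial) augmentation at $x$ would violate stability. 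Hence $X_n \cap N^{\le d}_{\C G}(x_0) = \emptyset$, and maximality forces $x_0 \in X_n$, so the augmentation at $x_0$ changes $a_n$ on $R = R_{x_0}$, contradicting stability on $R$.

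For the certificate maps $f_j$: each $y \in \RR{P}(a) \setminus \RR{P}(a_0)$ is first added to the progress set at a unique round $n(y)$, as $y = y'(n(y))$ for some $x(y) \in X_{n(y)}$ with $y \in N^{\le \rho}_{\C G}(R_{x(y)})$. List the $\le r$ elements of $R_{x(y)}$ in increasing $c$-order, and let $f_j(y)$ be the $(j+1)$-st such vertex (padding with the final element if $|R_{x(y)}| \le j$). Each $f_j$ is Borel by Lemma~\ref{lm:LocalRule} applied to the Borel labelling that encodes the whole iterative construction, and $f_j \subseteq \C E_{\C G}$ since $R_{x(y)}$ is connected and contains a vertex within $\C G$-distance $\rho$ of $y$. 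Finally, by the charging argument from the convergence step, any $x$ with $a(x) \neq a_0(x)$ belongs to $R_{x(y^*)}$ for the newly-progressed vertex $y^* = y'(m)$ corresponding to the last round $m$ at which $a_m(x)$ changes; hence $x$ lies in the image of some $f_j$ applied to $y^* \in \RR{P}(a) \setminus \RR{P}(a_0)$.
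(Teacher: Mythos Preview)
Your proof is correct and follows essentially the same scheme as the paper's: iterate parallel non-interfering $r$-augmentations, show pointwise convergence by charging each change to a new vertex entering the progress set, and eliminate leftover augmentations using that each type is revisited infinitely often. The bookkeeping differs only cosmetically---the paper indexes rounds by \emph{label sets} and uses the assumed $(r+2t)$-sparseness of $\ell$ so that all augmenting sets with a given label set are automatically far apart, whereas you index by \emph{templates} (with the update baked in) and enforce separation via a maximal independent set in~$\C G^d$; one small point is that your appeal to Lemma~\ref{lm:LocalRule} for the Borelness of $f_j$ is a bit loose (the map is not a single local rule), but decomposing $f_j$ as $\bigcup_n f_{j,n}$ over the Borel domains $\RR P(a_{n+1})\setminus \RR P(a_n)$, exactly as the paper does, fixes this immediately.
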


	\begin{proof} Let $t\in \omega$ be such that both $\RT$ and $\RR P$ can be computed by  a $t$-local rule.
		
		As in the proof of Lemma~\ref{lm:LocalRule}, we can assume that $\ell:V\to \omega$ is an $(r+2t)$-sparse colouring. Fix a sequence $(X_i)_{i\in\omega}$ where each $X_i$ is a non-empty subset of $\omega$ of size at most $r$ such that each such set appears as $X_i$ for infinitely many values of the index~$i$. Given $a_0$, we inductively define Borel assignments $a_1,a_2,\ldots:V\to\omega$, each solving the LCL~$\RT$. (Also, we will define some auxiliary functions $f_{i,j}$ that will be used to construct the final functions $f_j$, $j<r$.)
		
		Informally speaking, each new $a_{i+1}$ is obtained from $a_{i}$ by doing simultaneously all $r$-augmenta\-tions that can be supported on a connected set whose $\ell$-labels are exactly~$X_i$. Every two such sets are far away from each other by the sparseness of~$\ell$, so all these augmentations can be done in parallel without conflicting with each other.
		
		Suppose that $i\ge 0$ and we have already defined $a_{i}$, a Borel assignment $V\to\omega$ that solves~$\RT$. Let $\C X_i$ be the family of subsets $S\subseteq V$ such that $\induced{\C G}{S}$ is connected, $|S|\le r$, and $X_i=\ell(S)$, that is, the set of $\ell$-values seen on $S$ is precisely~$X_i$. Note that, since $\ell$ is $(r+2t)$-sparse, $\ell$ is injective on each $S\in\C X_i$ and the distance in $\C G$ between any two sets from $\C X_i$ is larger than $2t$; in particular, these sets are pairwise disjoint.
		Let $\C Y_i$ consist of those $S$ in $\C X_i$ which are augmenting for~$a_{i}$.
		Do the following for every $S\in\C Y_i$. First, take the lexicographically smallest function $b_S:S\to\omega$ such that the assignment 
		$
		\replace{a_{i}}{b_S}
		$ 
		(which is obtained from $a_i$ by letting the values of $b_S$ supersede it on $\dom(b_S)=S$) satisfies $\RT$ at every vertex 
		and has a strictly larger progress set than $a_{i}$ has, 
		that is,
		\beq{eq:Strict}
		\RT(\replace{a_i}{b_S})=V \mbox{ \ and \ } \RR P(\replace{a_{i}}{b_S}) \supsetneq \RR P(a_{i}).
		\eeq
		Let $P_S:=\RR P(\replace{a_{i}}{b_S})\setminus \RR P(a_{i})\not=\emptyset$. This set measures the progress made by the augmentation on~$S$. Note that $P_S$ is finite as a subset of $N^{\le t}(S)$. Let $(f_{S,0},\dots,f_{S,r-1})\in (S^{P_S})^r$ be the lexicographically smallest sequence with each $f_{S,i}$ being a function from $P_S$ to $S$ such that their combined images cover~$S$, that is, $\cup_{i\in r} \pi_1(f_{S,i})=S$. 
		(When defining the lexicographical order on $(S^{P_S})^r$, we can use, for definiteness, the total ordering of $S\cup P_S\subseteq N^{\le t}(S)$ given by the values of the $(r+2t)$-sparse colouring $\ell$ which is injective on this set.)
		Note that $r$ functions are enough as $|S|\le r$ while the ``worst'' case is when $P_S\not=\emptyset$ is a singleton.
		Having processed each $S\in\C Y_i$ as above, we define 
		$$
		a_{i+1}:=\replace{a_{i}}{\cup_{S\in\C Y_i}\, b_S}.
		$$ 
		In other words, $a_{i+1}$ is obtained from $a_i$ by replacing it by $b_S$ on each $S\in\C Y_i$. (Recall that these sets are pairwise disjoint.) Likewise, let
		\beq{eq:Fij}
		f_{i,j}:= \cup_{S\in \C Y_i} f_{S,j},\quad j\in r.
		\eeq

		Let us check, via induction on $i\in\omega$, the claimed properties of each constructed assignment~$a_{i+1}$, namely, that $a_{i+1}$ is Borel and solves~$\RT$. Note that $a_{i+1}$ is defined by a local rule on $(\C G,(\ell,a_{i}))$ of radius $r+2t$: by looking at $N^{\le r}(x)$ we can check if $x\in S$ for some $S\in\C X_i$ and, if such a set $S$ exists, then $N^{\le 2t}(S)\subseteq N^{\le r+2t}(x)$ determines whether $S\in\C Y_i$ and the value of $a_{i+1}$ on~$x$. (Note that we may need to look at distance as large as $2t$ from $S$ because the new values of $a_{i+1}$ on $S$ can affect the values of $\RT$ and $\RR P$ on $N^{\le t}(S)$ which in turn can depend on the values of $a_{i}$ on~$N^{\le 2t}(S)$.) Thus by induction and Lemma~\ref{lm:LocalRule}, the assignment $a_{i+1}:V\to\omega$ is Borel. Let us show that $a_{i+1}$ satisfies~$\RT$. Take any $x\in V$. If $\induced{a_{i+1}}{N^{\le t}(x)}=\induced{a_{i}}{N^{\le t}(x)}$, that is, $a_{i+1}$ and $a_{i}$ coincide  on every vertex at distance at most $t$ from $x$, then $\RT$ returns the same value on $x$ for $a_{i+1}$ as for $a_{i}$, which is 1 by induction. Otherwise there is $y\in N^{\le t}(x)$ which changes its value when we pass from $a_{i}$ to $a_{i+1}$. Let $S$ be the
		unique element of $\C Y_i$ that contains~$y$. 
		One of the requirements when we defined $b_S:S\to\omega$ was that $a':=\replace{a_{i}}{b_S}$ satisfies $\RT$. In particular,  $a'$ satisfies $\RT$ at the vertex~$x$. Now $a_{i+1}$ and $a'$ are the same at $N^{\le t}(x)$ because every element of $\C Y_i\setminus\{S\}$ is at distance more than $2t$ from $S$ and, by $S\cap N^{\le t}(x)\not=\emptyset$, at distance more than $t$ from~$x$. 
		Thus $a_{i+1}$ satisfies $\RT$ at $x$ since $a'$ does.
		As $x\in V$ was arbitrary, $a_{i+1}$ solves~$\RT$.
		
		Note that, since $t$ is an upper bound also on the radius of $\RR P$, the same argument as above when applied to every element of $N^{\le t}(S)$ shows by $\RR P(a_{i}\leadsto b_S)\supsetneq \RR P(a_{i})$ that 
		\beq{eq:Progress}
		\RR P(a_{i+1})\cap N^{\le t}(S)=\RR P(\replace{a_{i}}{b_S})\cap N^{\le t}(S)\supsetneq \RR P(a_{i})\cap N^{\le t}(S),\quad\mbox{for every $S\in\C Y_i$}.
		\eeq

		Let us show that, for every $x\in V$, the values $a_i(x)$, $i\in\omega$, stabilise eventually. Suppose that $a_{i+1}(x)\not= a_{i}(x)$ for some $i\in\omega$. Then $x\in S$ for some augmenting set $S\in \C Y_i$. When we change $a_{i}$ to $b_S$ on $S$, the progress set strictly increases, so
		take any $y\in \RR P(\replace{a_{i}}{b_S})\setminus \RR P(a_{i})$.  Of course, $y\in N^{\le t}(S)$.
		By~\eqref{eq:Progress}, we have $y\in \RR P(a_{i+1})\setminus \RR P(a_{i})$.
		This means that, every time the assignment at $x$ changes, some extra vertex from $N^{\le r+t}(x)$ is added to the progress set. This can happen only finitely many times by the local finiteness of $\C G$
		(and since no vertex is ever removed from the progress set by~\eqref{eq:Progress}). Thus the constructed assignments $a_{i}$ stabilise at every vertex, as claimed. 
		
		Define $a:V\to\omega$ by letting 
		$a(x)$ be the eventual value of $a_i(x)$ as $i\to\infty$.
		
		This assignment $a:V\to\omega$ is finitary and thus Borel by Corollary~\ref{cr:Finitary}.
		Also, it solves the LCL~$\RT$ (because, for every $x\in V$, all values of $a$ on $N^{\le t}(x)$ are the same as the values of the $\RT$-satisfying
		assignment $a_i$ for sufficiently large $i$).
		
		Let us show that no connected set $S\subseteq V$ of size at most $r$ can be augmenting for~$a$. This property depends on the values of $a$ on the finite set $N^{\le 2t}(S)$. Again, there is $i_0\in\omega$ such that $a$ and $a_i$ coincide on this set for all $i\ge i_0$. Since the set $\ell(S)$ appears in the sequence $(X_i)_{i\in\omega}$ infinitely often, there is $j\ge i_0$ with $X_j=\ell(S)$. Thus $S$ is in $\C X_j$ but not in $\C Y_j$ (otherwise at least one value on $S$ changes when we define~$a_{j+1}$). Thus $S$ is not augmenting for $a_j$ and, consequently, is not augmenting for~$a$. We conclude that the constructed Borel assignment $a$ admits no $R$-augmentation.

		Finally, let us define $f_{j}:=\cup_{i\in\omega} f_{i,j}$ for $j\in r$, where  each $f_{i,j}$ was defined in~\eqref{eq:Fij}.
		Note that when we define $f_{i,j}$ on some $P_S$ then, by~\eqref{eq:Progress}, every vertex of $P_S$ moves to the progress set at Stage~$i$ and stays there at all later stages. Thus each $f_j$, as a subset of $V\times V$, is a function.
		
		Note that each function $f_{i,j}$ moves any vertex in its domain by bounded distance in $\C G$ (namely, at most $r+t$). So it can be encoded by a vertex labelling $\ell_{i,j}$ on $V$ where for every $x\in V$ we specify if $f_{i,j}$ is defined on $x$ and, if yes, the sequence of the $\ell$-labels on the shortest (and then $\ell$-lexicographically smallest) path from $x$ to~$f_{i,j}(x)$. Each $\ell_{i,j}$ can clearly be computed by a local rule so it is Borel by Lemma~\ref{lm:LocalRule}. Lemma~\ref{lm:fS} implies that each function $f_{i,j}$ is Borel. Thus each function $f_j$ for $j\in r$ is Borel.
		Also, every vertex $x\in V$ where $a$ differs from $a_0$ belongs to some $S\in \C Y_i$ for some $i\in\omega$ and, by construction, $x$ is covered by the image of $f_{i,j}$ for some $j\in r$. Thus the images of $f_0,\dots, f_{r-1}$ cover all vertices where $a$ differs from~$a_{0}$, while the domain of each $f_j$ is $\RR P(a)\setminus \RR P(a_0)$ by construction.
		
		This finishes the proof of Theorem~\ref{th:ElekLippner}.\end{proof}
	

	\section{Negative Results via Borel Determinacy}
	\label{se:Marks16}
	
	The greedy bound $\chi(G)\le d+1$, where $d:=\Delta(G)$ is the maximal degree of $G$, is not in general best possible and can be improved for many finite graphs. For example, Brooks' theorem~\cite{Brooks41} states that, for a connected graph $G$, we have $\chi(G)\le d$ unless $G$ is a clique or an odd cycle. (See also Molloy and Reed~\cite{MolloyReed14} for a
	far-reaching generalisation of this result.)
	
	In contrast to these results, Marks~\cite{Marks16} showed rather surprisingly that, for every $d\ge 3$, the greedy upper bound $d+1$ on the Borel chromatic number is best possible, even if we consider acyclic graphs only. This was previously known for $d=2$:  the irrational rotation graph $\C R_\alpha$ of  
	Example~\ref{ex:Rotation}, is a $2$-regular acyclic Borel graph whose Borel chromatic number is $3$. 
	
	\hide{
		Before this result of Marks~\cite{Marks16}, Conley and Kechris~\cite[Theorem~0.7]{ConleyKechris13} showed that a specific $2n$-regular acyclic Borel graph, namely that the free part of the shift graph of $\Gamma\actson 2^\Gamma$ where $\Gamma$ is
		the free group with $n$ generators, has Borel chromatic number at least
		$\sqrt{2n-1}/(n+\sqrt{2n-1})$. As Lyons and Nazarov~\cite[Section~5]{LyonsNazarov11} observed (for more details see~\cite[Theorem~5.46]{KechrisMarks:survey}) that the known results on the independence ratio of random $d$-regular graphs allow to improve this bound to $d/(2\log d)$ when we take $[0,1]$ instead of $2=\{0,1\}$ in the above construction. (In fact, both of these papers have much stronger results than stated, namely that any measurable independent set has small measure with respect to the uniform probability measure on $2^\Gamma$ or $[0,1]^\Gamma$.)
	}
	
	We present a slightly stronger version which follows directly from Marks' proof.
	
	\begin{theorem}[Marks~\cite{Marks16}] 
		\label{th:Marks16}
		For every $d\ge 3$ there is a Borel acyclic $d$-regular graph $\C G=(V,E,\C B)$ with a Borel proper edge colouring $\ell:E\to d$ such that for every Borel
		map $c:V\to d$ there is an edge $(x,y)\in E$ with $c(x)=c(y)=\ell(x,y)$. (In particular,
		$\chi_{\C B}(\C G)\ge d+1$.)
	\end{theorem}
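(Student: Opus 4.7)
The plan is to realise $\C G$ as (a Borel piece of) a Schreier graph of a free product of $d$ involutions and then rule out every ``bad'' Borel colouring $c$ by an infinite-game argument based on Borel determinacy. Set $\Gamma_d := (\I Z/2\I Z)^{*d}$, the $d$-fold free product, with generators $a_0,\dots,a_{d-1}$, each of order~$2$. The Cayley graph of $\Gamma_d$ with respect to $\{a_0,\dots,a_{d-1}\}$ is the $d$-regular tree, so the Schreier graph of the shift action $\Gamma_d\actson 2^{\Gamma_d}$, restricted to the free part $V:=\mathrm{Free}(\Gamma_d\actson 2^{\Gamma_d})$, is a $d$-regular acyclic graph $\C G$. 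Its Borel structure follows from Example~\ref{ex:BAction} together with Lemma~\ref{lm:FreeB}. Define $\ell(x,a_i.x):=i$; since the generators are pairwise distinct involutions and the action on $V$ is free, this is a well-defined Borel proper edge $d$-colouring.

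To prove the colouring lower bound I would argue by contradiction. Suppose $c:V\to d$ is Borel and satisfies $c(x)=i\Rightarrow c(a_i.x)\neq i$ for every $x\in V$ and every $i\in d$. For each ordered pair $(i,j)\in d^2$ of distinct colours I would set up an infinite two-player game $G_{i,j}$ in which the two players cooperatively construct a point $z\in 2^{\Gamma_d}$ by alternately committing finitely many bits in successive rounds. The crucial design principle is the partition of $\Gamma_d$ according to the first letter of the reduced word: the block ``starts with $a_i$'' and its complement are \emph{swapped} by left-multiplication by the involution $a_i$, so the two players' data can be naturally reinterpreted as data for $a_i.z$ as well. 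Player~I is given control of one block, Player~II of the other, and the payoff is phrased entirely in terms of the $c$-values of $z$ and $a_i.z$ (e.g., Player~I wins iff $z\in V$ and $c(z),c(a_i.z)$ assume a prescribed pair of values). Because $V$ is Borel, $c$ is Borel, and the shift action is continuous, the payoff set is a Borel subset of the Polish space of runs, so Martin's Borel determinacy theorem gives a winning strategy for exactly one of the two players.

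The next step is to convert such a winning strategy into a \emph{Borel} self-map of $V$. Feeding the opponent's moves to the strategy as a continuous function of an input point $x\in V$ produces an output $z=z(x)\in V$ depending on $x$ in a Borel way, and the winning condition then forces a Borel implication between $c(x)$ and $c(a_i.x)$ (or, after shifting, between adjacent $c$-values along an $a_i$-edge). Doing this for every ordered pair $(i,j)$ gives a system of $d(d-1)$ Borel implications among the colour values along edges; combining them, using that $d\ge 3$ leaves enough room to propagate the constraints around the colour set, one constructs a Borel set of vertices on which $c$ would have to take two distinct values along a single $a_i$-edge, or equivalently a non-empty Borel set of $x\in V$ with $c(x)=c(a_i.x)=i$. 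This contradicts the standing hypothesis on $c$.

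The main obstacle, and the technical heart of Marks' argument, is the precise design of $G_{i,j}$: (i)~ensuring that the payoff is Borel, which requires tracking how $z$ depends on infinitely many rounds of finite moves; (ii)~converting a winning strategy into a \emph{Borel} map on $V$, for which the opponent's moves must be parametrized in a Borel way by points of $V$; and (iii)~arranging that the resulting family of implications on $c$ is combinatorially inconsistent with the forbidden-edge hypothesis. The symmetry of the free product $\Gamma_d$ under left-multiplication by the involution $a_i$---which splits $\Gamma_d$ into exactly the two blocks the two players control---is what allows all three points to be handled simultaneously, and is the ingenious idea at the core of Marks'~\cite{Marks16} proof.
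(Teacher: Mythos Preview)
Your framework is right---free product of $d$ involutions, shift action, games decided by Borel determinacy---but the execution diverges from Marks' argument in ways that leave genuine gaps.

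First, the games are parametrised differently. In the paper the game $G_{i,j}$ has $i\in d$ and $j$ an \emph{initial label value} (the players construct a labelling $x:\Gamma\to\omega$ with $x(e)=j$, Player~I filling in words beginning with $\gamma_i$ and Player~II the rest); Player~I wins iff $c(x)\neq i$. Your games are indexed by pairs of distinct \emph{colours} and your payoff involves both $c(z)$ and $c(a_i.z)$, which is not the setup that makes the argument go through.

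Second, and more seriously, the mechanism is not ``convert a winning strategy into a Borel self-map of $V$ and derive $d(d-1)$ Borel implications''. There is no input point $x$ being fed into a strategy; the strategies are played \emph{against each other} to manufacture a single explicit point. The actual two-step structure is: (a)~if for some fixed $j$ Player~I won every $G_{i,j}$, $i\in d$, one could play all $d$ Player~I strategies simultaneously (each owning one branch of the tree) to build a point $x$ with $c(x)\neq i$ for every $i$, which is absurd; hence for each $j$ there is some $i=i(j)$ with Player~II winning $G_{i,j}$; (b)~by pigeonhole over the (more than $d$) values of $j$, there exist $j_0\neq j_1$ with $i(j_0)=i(j_1)=:i$, and now one plays the two Player~II strategies against each other, one shifted by $\gamma_i$, to build a single $x$ with $c(x)=c(\gamma_i.x)=i$. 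No Borel maps, no propagation of constraints, and no special role for $d\ge 3$ in this step.

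Third, you do not address why the constructed point lies in the free part. In the paper this is handled by working not on the whole free part but on the invariant Borel set $X\subseteq\omega^\Gamma$ of \emph{proper} vertex colourings of the Cayley graph (the condition $j_0\neq j_1$ guarantees the constructed $x$ is in $X$), proving the game claim for $X$, and then separately showing (Claim~\ref{cl:X-Y}) that $\induced{\C S}{(X\setminus Y)}$ admits a Borel proper $d$-colouring, so that any putative good colouring of the free part $Y$ extends to a good colouring of all of $X$, yielding the contradiction. Your proposal, working directly on $\mathrm{Free}(\Gamma_d\actson 2^{\Gamma_d})$, has no mechanism to ensure the point produced by the strategies is free; this is a real gap, not a detail. (Relatedly, note that the paper uses $\omega^\Gamma$, or at least $A^\Gamma$ with $|A|\ge (d-1)^2+1$, precisely so that the pigeonhole step has enough values $j$ to work with.)
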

	
	\begin{proof} We follow the presentation from Marks~\cite{Marks13u}, generally adding more details. While the proof can be concisely written (the whole note~\cite{Marks13u} is only 1-page long), it is quite intricate.

		Let 
		$$
		\Gamma:=\langle \gamma_0,\dots,\gamma_{d-1}\,|\,\gamma_0^2=\dots=\gamma_{d-1}^2=e\rangle
		$$ 
		be the group freely generated by $d$ involutions $\gamma_0,\dots,\gamma_{d-1}$, that is, $\Gamma=\I Z_2*\dots*\I Z_2$ is the free product of $d$ copies of $\I Z_2$, the cyclic group of order~$2$. 
		
		Let $(G,\lambda)$
		be the \emph{(right) edge coloured Cayley graph} of $(\Gamma;\gamma_0,\dots,\gamma_{d-1})$
		whose vertex set is $\Gamma$ and whose edges are given by right multiplication by the involutions $\gamma_0,\dots,\gamma_{d-1}$, that is, for each $\beta\in\Gamma$ and $i\in d$ we connect $\beta$ and $\beta\gamma_i$ by an edge, which gets colour~$i$ under~$\lambda$. (Note that 
		the colour of the edge $\{\beta,\beta\gamma_i\}$ does not depend on the choice of an endpoint since $\gamma_i^2=e$.)  The graph $(G,\lambda)$ is isomorphic to the infinite edge $d$-coloured  $d$-regular tree.
		
		The group $\Ga$ naturally acts on itself. We consider the left action $a:\Gamma\actson\Gamma$ where the action $a(\gamma,\cdot)$ of $\gamma\in \Ga$ is just the left multiplication by~$\gamma$ which maps $\beta\in\Gamma$ to $\gamma\beta\in\Ga$. Note that we take the left multiplication for the action but the right multiplication when defining the Cayley graph~$G$. This ensures that the automorphisms of the graph $G$ that preserve the edge colouring $\lambda$ are precisely the left multiplications by the elements of $\Ga$:
		\beq{eq:Aut}
		\mathrm{Aut}(G,\lambda)
		=\{a(\gamma,\cdot)\mid\gamma\in\Ga\}.
		\eeq

		We view the elements of $\omega^\Gamma$ as functions $\Gamma\to\omega$ and  call them \emph{labellings}. (The proof, as it is written, also works if we replace $\omega^\Gamma$ by $A^\Gamma$ for some finite set $A$ of size $(d-1)^2+1$.) The standard Borel structure on $\omega^\Gamma$ comes from the product topology where we view $\omega^\Gamma$ as the product of countably many copies of the discrete space~$\omega$. 
		
		The group $\Ga$ naturally 
		acts on this space via the \emph{(left) shift action} $s:\Ga\actson \omega^\Gamma$ defined as follows. For $\gamma\in\Gamma$ and $x\in \omega^\Gamma$, the \emph{(left) shift} $\gamma.x\in \omega^\Gamma$ of $x$ is defined by 
		$$
		(\gamma.x)(\beta):=x(\gamma^{-1}\beta),\quad \beta\in \Gamma.
		$$
		In other words, we just pre-compose the labelling $x:\Gamma\to\omega$ with the map $a(\gamma^{-1},\cdot)$, the left multiplication by~$\gamma^{-1}$. (We take the inverse of $\gamma$ to get a left action, namely so that
		the identity $(\gamma\beta).x=\gamma.(\beta.x)$ always holds.%
		)
		For each $\gamma\in \Ga$, the $\gamma$-shift map $s(\gamma,\cdot):\omega^\Gamma\to \omega^\Gamma$ is Borel; in fact, it is a homeomorphism of the product space $\omega^\Gamma$ as it just permutes the factors. Thus the action is Borel.
		
		Let $\C S$ be the \emph{shift graph} on $\omega^\Gamma$ where a vertex $x\in\omega^\Ga$ is adjacent to every element in $\{\gamma_0.x,\dots,\gamma_{d-1}.x\}\setminus\{x\}$. In other words, $\C S$ is the Schreier graph $\C S(s;\{\gamma_0,\dots,\gamma_{d-1}\})$ as defined in Example~\ref{ex:BAction}.
		The Borel graph $\C S$ comes with the Borel edge colouring $E(\C S)\to 2^d$ where the colour of an edge $(x,y)\in E(\C S)$ is the (non-empty) set of  $i\in d$ such that $\gamma_i.x=y$.
		
		The graph $\C S$ has cycles and is not $d$-regular. (For example, the constant-0 labelling of $\Ga$ is an isolated vertex of $\C S$.)
		Let $X$ consist of those labellings $x\in \omega^\Gamma$ that give a proper vertex colouring of the Cayley graph~$G$, that is,
		\begin{eqnarray*}
			X&:=&\{x\in \omega^\Gamma\mid \forall \beta\in\Gamma\ \forall i\in d\ \ x(\beta)\not=x(\beta\gamma_i)\}\\
			&=&\bigcap_{\beta\in\Gamma} \bigcap_{i\in d} \bigcup_{k,m\in \omega\atop k\not=m}
			\{x\in \omega^\Gamma\mid x(\beta)=k\ \wedge \ x(\beta\gamma_i)=m\}.
		\end{eqnarray*}
		The second formula for $X$ makes it clear that this set is a Borel subset of~$\omega^\Gamma$.  Also, $X$ is an invariant set under the shift action $s$;  this follows from~\eqref{eq:Aut} since a proper colouring remains proper when pre-composed with an automorphism of the graph.
		
		The graph $\induced{\C S}{X}$ is neither acyclic nor $d$-regular. For example, there are exactly two proper $2$-colourings $\Ga\to\{0,1\}$ of the bipartite graph $G$ and they form an isolated edge in~$\induced{\C S}{X}$. So we need to do another (final) trimming.

		Let $Y:=X\cap \mathrm{Free}(s)$ be the intersection of $X$ with the free part of the shift action~$s$. 
		In other words, $Y$ consists of those proper vertex colourings of $G$
		that have no symmetries under the automorphisms of the edge-coloured graph~$(G,\lambda)$. 
		The set $Y$ is invariant under the action $s$, since $X$ and the free part $\mathrm{Free}(s)$ are. In particular, there are no edges connecting $X\setminus Y$ to $Y$ and the induced subgraph $\C G:=\induced{\C S}{Y}$ is $d$-regular and acyclic. The graph $\C G$ comes with the Borel edge $d$-colouring $\ell$, where, for each $x\in Y$ and $i\in d$, we colour the edge $\{x,\gamma_i.x\}$ by~$i$. 
		As we argued before, the set $X$ is Borel. Also, the free part of the Borel action $s$ is Borel by Lemma~\ref{lm:FreeB}. Thus $Y$ and the graph $\C G$ are Borel. The following two claims clearly imply that the edge coloured graph $(\C G,\ell)$ satisfies the theorem. (Note that,  for all $x\in X$ and $i\in d$, we have $\gamma_i.x\not=x$ because $(\gamma_i.x)(e)=x(\gamma_i^{-1})$ is different from $x(e)$ as $x\in X$ is a proper colouring of~$G$ and assigns distinct colours to the adjacent vertices $e$ and~$\gamma_i^{-1}=\gamma_i$.)

		\begin{claim}\label{cl:X} If $c:X\to d$  is a Borel map then  there is $x\in X$ and $i\in d$ with $c(x)=c(\gamma_i.x)=i$.
		\end{claim}

		\begin{claim}\label{cl:X-Y} There is a proper Borel $d$-colouring $c$ of the graph $\induced{\C S}{(X\setminus Y)}$.
		\end{claim}

		\bcpf[Proof of Claim~\ref{cl:X}.] 
		Given $c:X\to d$, we define for every $i\in d$ and $j\in\omega$  the game $G_{i,j}$ where two players, I and II,  take turns to construct $x:\Ga\to\omega$ which is a proper vertex colouring of~$G$. Initially, we start with the partial colouring $x$ which assigns value $j$ to the identity (that is $x(e)=j$) and is undefined on~$\Gamma\setminus\{e\}$. There are countably many rounds as follows.  For convenience, let us identify each element $\beta\in\Ga$ with the
		unique reduced word in $\gamma_0,\dots,\gamma_{d-1}$ representing~$\beta$. 
		In \emph{Round $r$} for $r=1,2,\dots\,$, first Player I chooses the values of $x$ at all reduced words of length~$r$ that begin with $\gamma_i$, and then Player II chooses the values of $x$ at all other reduced words of length~$r$ (that is, those that begin with $\gamma_m$ for some $m\in d\setminus\{i\}$). The restriction that applies to both players is that the current partial vertex colouring $x$ of $G$ is proper  at every stage. Since there are infinitely many available colours, there is always a non-empty set of responses for a player. So the game continues for $\omega$ rounds. Since a value of $x$, once assigned, is never changed later, a run of the game gives a fully defined map $x:\Ga\to\omega$ which, by the imposed restriction, is in fact an element of~$X$. Player I wins the game if and only if $c(x)\not=i$,  where $c:X\to d$ is the given Borel map.
		
		In general, by using the Axiom of Choice one can design games of the above type (when two players construct an infinite sequence in countably many rounds) where none of the players has a winning {strategy},
		that is, for every strategy of one player, there is a strategy of the other player that beats it. The groundbreaking result of Martin~\cite{Martin75} (with a simplified proof presented in~\cite{Martin85}) states that, for games with countably many choices in each round,  if the set of winning sequences is Borel then the game is \emph{determined}, i.e.\ one of the players has a winning strategy. Here, for the game $G_{i,j}$, the set of winning labellings for Player I is exactly $c^{-1}(i)$. Thus each game $G_{i,j}$ is determined.
		
		Let us show that for every $j\in\omega$ there is $i\in d$ such that Player II has winning strategy in~$G_{i,j}$. Suppose on the contrary that this is false. This implies by the Borel determinacy that, for every $i\in d$, Player I has a winning strategy for $G_{i,j}$; let us call this strategy~$S_i$. Now, we let these $d$ strategies play each against the others
		as follows. We start with $x(e):=j$. In \emph{Stage~$r$}, for $r=1,2,\dots$, we use the round-$r$ responses of the strategies $S_0,\dots,S_{d-1}$ in parallel to define $x$ on all reduced words of length~$r$. Induction on $r$ shows that, before this stage, we have a proper partial vertex colouring $x$ defined on $N^{\le r-1}_G(e)$ with $x(e)=j$. For every $i\in d$, this is a legal position of $G_{i,j}$ when  Player I is about to define $x$ on length-$r$ reduced words beginning with~$\gamma_i$. So we use the values specified by~$S_i$ on these words. Also, the new values assigned by two different strategies are never adjacent; in fact, they are $2r$ apart as the unique shortest path between them in the tree $G$ has to go via the identity~$e$. Thus, after Stage~$r$, the new partial labelling $x$ is a proper vertex colouring of $N^{\le r}_G(e)$ and we can proceed to Stage~$r+1$. The final labelling $x:\Gamma\to\omega$ is clearly a proper colouring of~$G$. Since each $S_i$ is a winning strategy, we have that $c(x)\not=i$. But this is impossible as $c$ has to assign some colour to~$x\in X$.

		By the previous paragraph and the Pigeonhole Principle, there are $i\in d$ and distinct $j_0,j_1\in\omega$ such that Player II has a winning strategy in $G_{i,j_0}$ and $G_{i,j_1}$.  Let these strategies be $T_0$ and $T_1$ respectively. 
		Now, we let $T_0$ play against $\gamma_i.T_1$, the ``$\gamma_i$-shifted'' version of~$T_1$, to construct a labelling~$x:\Ga\to\omega$ as follows. Initially, we let $x(e):=j_0$ and $x(\gamma_i):=j_1$, viewing it as \emph{Stage~$0$}. Iteratively for each $r=1,2,\dots$, \emph{Stage~$r$} uses the round-$r$ responses of $T_0$ and $\gamma_i.T$ in parallel
		to colour  all reduced words of length $r$ that  do not start with $\gamma_i$ and, respectively, all $\beta\in\Ga$ such that the reduced word of $\gamma_i\beta$ has length $r$ and does not start with~$\gamma_i$. The last set is represented precisely by reduced words of length $r+1$ that start with~$\gamma_i$. Thus, an induction on $r\ge1$ shows that the set $D_r$ of vertices on which the partial colouring $x$ is defined just before Stage~$r$ is represented by all reduced words of length $r-1$ and those of length $r$ that being with~$\gamma_i$. (This is true in the base case $r=1$ since the initial colouring is defined on~$D_1=\{e,\gamma_i\}$.) This is exactly the information that the strategy $T_0$ needs to know in Round~$r$. Note that $\gamma_i.D_r:=\{\gamma_i.\beta\mid \beta\in D_r\}$ is the same set $D_r$ so the round-$r$ response of $\gamma_i.T_1$ can also be computed. Thus we can play $T_0$ and $\gamma_i.T_1$ in every stage, without any conflicts between the assigned values. After $\omega$ stages,
		$\gamma_i.T_1$ colours all reduced words beginning with $\gamma_i$ and $T_0$ colours the rest of $\Gamma$ and
		we 
		get an everywhere defined function $x:\Ga\to\omega$ which is also a proper colouring of the Cayley graph~$G$. (Note that the only possibly conflicting edge $\{e,\gamma_i\}$ of $G$ gets distinct colours $j_0$ and $j_1$ before Stage~1.)

		Since $x$ can be represented as a run of the game $G_{i,j_0}$ where Player II applies the winning strategy $T_0$, it holds that $c(x)=i$. Also, $\gamma_i.x$ is a run of the game $G_{i,j_1}$, where the winning strategy~$T_1$ is applied. Thus $c(\gamma_i.x)=i$.  We see that this labelling $x$ satisfies Claim~\ref{cl:X}.\ecpf
		
		Next, we prove the remaining Claim~\ref{cl:X-Y} with its proof being fairly routine to experts.
		
		\bcpf[Proof of Claim~\ref{cl:X-Y}.] 
		Let a \emph{generalised cycle} be a finite sequence 
		$$
		(x_0,\dots,x_{m-1};\gamma_{i_0},\dots,\gamma_{i_{m-1}})\in (\omega^\Ga)^{m}\times \Ga^{m}
		$$ such that $m\ge 1$, $x_0,\dots,x_{m-1}$ are pairwise distinct, 
		$x_{j+1}=\gamma_{i_j}.x_j$ for every $j\in m$ where we denote $x_m:=x_0$, and  if $m=2$ then $i_0\not=i_1$. If $m\ge 3$ then this gives a usual cycle of length $m$ in the graph~$\C S$ (with a direction and a starting vertex specified). The cases $m=1$ and $m=2$ correspond to ``imaginary cycles": if we re-define the Schreier graph $\C S$ as the natural $d$-regular multigraph with loops, then these would correspond to loops and pairs of multiple edges respectively.
		
		Note that generalised cycles are minimal witnesses to non-freeness of the shift action $s:\Gamma\actson \omega^\Ga$ in the following sense.
		Suppose that $y\in \omega^\Gamma$ is not in the free part. Then there are $x_0\in [y]\OptionalG $ and non-identity $\gamma\in\Ga$ with $\gamma.x_0=x_0$. Writing $\gamma=\gamma_{i_{m-1}}\dots\gamma_{i_0}$ as the reduced word in $\{\gamma_0,\dots,\gamma_{d-1}\}$ and inductively on $j\in m-1$, letting $x_{j+1}:=\gamma_{i_j}.x_{j}$, we get all properties of a generalised cycle except vertices can repeat here. Now, if there are repetitions among $x_0,\dots,x_m$ then take two repeating vertices whose indices are closest and restrict to the
		subsequence between them.

		Let us briefly argue that we can choose a Borel set $\C C$ of vertex-disjoint generalised cycles in 
		$$
		\C S':=\induced{\C S}{(X\setminus Y)}
		$$ such that every component of $\C S'$ contains at least one. (This claim also directly follows from  \cite[Lemma 7.3]{KechrisMiller:toe}.) First, notice that the function $S$ which corresponds to each $x\in X\setminus Y$ the shortest length of a generalised cycle in $[x]_{\C S'}=[x]_{\C S}$ is Borel by Corollary~\ref{cr:Finitary} as a finitary function. Namely, it is the pointwise limit of $r$-local rules $\RR S_r:X\setminus Y\to \omega\cup\{\omega\}$, where for $x\in \omega^\Ga$ we define $\RR S_r(x)$ to be the minimum length of a generalised cycle inside $N^{\le r}(x)$ (which is not required to pass through $x$) and to be $\omega$ if none exists. Note that the function $S$ is invariant (that is, assumes the same value for all vertices in a graph component) even though $\RR S_r$ need not be. Now, for each $m\ge 1$, define the graph $\C G_m$ whose vertices are all generalised cycles of length $m$ in the components where the function $S$ assumes value $m$,
		where two vertices of $\C G_m$ are adjacent if the corresponding cycles have at least one common vertex. This graph,
		whose vertex set is a subset of the Polish space $(\omega^\Ga)^{m}\times \Ga^{m}$,
		can be routinely shown to be Borel with the tools that we have already presented. Also, the maximum degree of $\C G_m$ can be bounded by $m\, d^m$. Thus, by 
		Theorem~\ref{th:MaxInd}, we can choose a Borel maximal independent set $I_m$ in $\C G_m$. The union $\C C:=\cup_{m\ge 1} I_m$ satisfies the claim by the maximality of~$I_m$. (In fact, more strongly, we picked a maximal subset of vertex-disjoint shortest generalised cycles inside each component of~$\C S'$.)

		For every chosen generalised cycle $(x_0,\dots,x_{m-1};\gamma_{i_0},\dots,\gamma_{i_{m-1}})\in\C C$ and for each $j\in m$, define $c(x_j):=i_j$. This partially defined vertex colouring of $\C S'$ has the property that, for every edge $\{x,\gamma_i.x\}$ of $\C S'$, if $c(x)=i$ then $c(\gamma_i.x)\not=i$. (Note that $\C S'$ does not have any loops by definition, although we used ``imaginary loops''  when defining generalised cycles of length~$1$.)
		
		Now, for every uncoloured vertex $x$ of $\C S'$ take a shortest path $P$ from $x$ to a generalised cycle in $\C C$ and, if there is more than one choice of $P$ then choose one where the sequence of edge colours on $P$ is lexicographically smallest. Define $c(x):=i$, where $i$ is the smallest element of $\{0,\dots,d-1\}$ such that $(x,\gamma_i.x)$ is the first edge of~$P$. Note that $c(\gamma_i.x)$ is the colour of either  the second edge on $P$ (if $P$ has at least two edges) or the edge coming out of $\gamma_i.x$ in the (unique) generalised cycle in $\C C$ containing $\gamma_i.x$. In either case, $c(\gamma_i.x)$ cannot be $i$ as otherwise $P$ is not a shortest path. 
		
		This colouring $c$ is finitary on the labelled graph $(\C S',\ell')$, where $\ell'$ is a (Borel) vertex labelling encoding both the edge labelling of $\C S'$ as well as the initial partial colouring of all vertices on the generalised cycles from~$\C C$. Indeed, $c$ can be built as the nested union of partial local colourings $\RR C_r$, $r\in\omega$, where each vertex $x$ computes its (final) colour if $N^{\le r}(x)$ contains at least one vertex covered by $\C C$ and declares $\RR C_r(x)$ undefined otherwise. 
		By Corollary~\ref{cr:Finitary}, $c$ is Borel.
		Thus $c$ is the required
		colouring of~$\C S'$. Claim~\ref{cl:X-Y} is proved.
		\ecpf
		
		This finishes the proof of Theorem~\ref{th:Marks16}.\end{proof}
	
	Among many further results, Marks~\cite[Theorem~1.4]{Marks16} proved that the greedy upper bound for edge colouring of Theorem~\ref{th:ChiB'} is best possible even for acyclic Borel graphs that additionally admit a Borel bipartition.
	
	\begin{theorem}[Marks~\cite{Marks16}]
		\label{th:Marks16Edge} For every $d\ge 3$, there is a Borel acyclic $d$-regular graph $\C G$ such that $\chi_{\C B}(\C G)=2$ and $\chi_{\C B}'(\C G)=2d-1$.\qed\end{theorem}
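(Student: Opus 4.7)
The plan is to adapt the construction in the proof of Theorem~\ref{th:Marks16}, shrinking the ambient space so as to force a Borel bipartition for free, while keeping enough combinatorial freedom for a game-theoretic lower bound on the Borel chromatic index. Take the same group $\Gamma=\langle\gamma_0,\dots,\gamma_{d-1}\mid\gamma_i^2=e\rangle$ and the same shift action $s:\Gamma\actson 2^\Gamma$. Let $X_2\subseteq 2^\Gamma$ consist of those $x:\Gamma\to 2$ which are proper $2$-colourings of the Cayley graph $G$ of $(\Gamma;\gamma_0,\dots,\gamma_{d-1})$; this is Borel and $s$-invariant by the same formula as in Theorem~\ref{th:Marks16}. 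Set $Y:=X_2\cap\mathrm{Free}(s)$, which is Borel by Lemma~\ref{lm:FreeB}, and let $\C G:=\induced{\C S}{Y}$ be the induced shift graph. Freeness of the action on $Y$ forces $\C G$ to be $d$-regular and acyclic (any cycle would yield a non-trivial relation $\gamma_{i_{m-1}}\cdots\gamma_{i_0}=e$ fixing a point of the free part).

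For the upper bound $\chi_{\C B}(\C G)\le 2$, I define $c:Y\to 2$ by $c(x):=x(e)$, which is plainly Borel. If $y=\gamma_i.x$ is a neighbour of $x$, then $c(y)=(\gamma_i.x)(e)=x(\gamma_i^{-1})=x(\gamma_i)$, and since $x\in X_2$ is a proper $2$-colouring of $G$ and $\{e,\gamma_i\}\in E(G)$, we get $c(y)\ne c(x)$. So $c$ is a proper Borel $2$-colouring, and $\chi_{\C B}(\C G)=2$ (the lower bound is trivial as $\C G$ has edges).

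The hard direction is $\chi_{\C B}'(\C G)\ge 2d-1$, and this is where I would mimic the game-theoretic argument of Claim~\ref{cl:X} applied to edges. Suppose, towards a contradiction, that there is a Borel proper edge colouring $c':E(\C G)\to 2d-2$. For each $i\in d$, $j\in 2$, and $k\in 2d-2$, define a game $G_{i,j,k}$ in which two players jointly construct $x\in X_2$ round by round, using exactly the same splitting of $\Gamma$ as in Claim~\ref{cl:X} (one player plays on reduced words beginning with $\gamma_i$, the other on the remainder), seeded with $x(e)=j$ and the legality constraint that $x$ stays a proper $2$-colouring of $G$. Player I wins iff $c'(\{x,\gamma_i.x\})=k$. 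Since the winning set is Borel, Borel determinacy (Martin) applies and each $G_{i,j,k}$ is determined.

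The main obstacle, and the source of the sharp constant $2d-1$, is the combinatorial pigeonhole step that replaces the one in Claim~\ref{cl:X}. For fixed $j$, the $d$ edges at any vertex $x$ with $x(e)=j$ must receive $d$ pairwise distinct colours from the palette of size $2d-2$, and there are $d\cdot(2d-2)$ pairs $(i,k)$. A counting argument, identical in spirit to the two-strategy joining trick in Claim~\ref{cl:X} but now run simultaneously across all $d$ directions at a vertex, should show that if Player II won $G_{i,j,k}$ for too many $(i,k)$ then we could amalgamate $d$ of Player I's strategies (in the remaining games) into a single run that yields an $x\in Y$ at which $c'$ is forced to repeat a colour on two of the $d$ incident edges; conversely, assembling Player II's winning strategies against a single Player I strategy in a sister game (shifted by $\gamma_i$, exactly as in the $G_{i,j_0}$ versus $\gamma_i.T_1$ construction of Claim~\ref{cl:X}) produces $x$ with $c'(\{x,\gamma_i.x\})$ simultaneously equal to two different values. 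Performing this bookkeeping so that the threshold comes out to $2d-2$ colours being insufficient, while $2d-1$ is the greedy upper bound from Theorem~\ref{th:ChiB'}, is the delicate part; once done, Claim~\ref{cl:X-Y}-style clean-up is unnecessary here since the construction is carried out directly on $Y$.
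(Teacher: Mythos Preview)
Your construction collapses immediately: the set $Y$ is empty. The Cayley graph $G$ of $\Gamma$ is a connected bipartite tree, so it has exactly two proper $2$-colourings (determined by the colour of $e$), and the paper itself points this out in the proof of Theorem~\ref{th:Marks16}. Neither of these colourings lies in the free part of the shift, since for any $i\ne j$ the element $\gamma_i\gamma_j\in\Gamma\setminus\{e\}$ acts trivially on both (each $\gamma_i$ swaps the two colourings, so a product of two generators fixes them). Hence $Y=X_2\cap\mathrm{Free}(s)=\emptyset$ and $\C G$ is the empty graph. The same rigidity kills your games $G_{i,j,k}$: once $x(e)=j$ is prescribed, every value $x(\beta)$ is forced by the parity of the word length of $\beta$, so neither player has a single move to make and there is nothing for Borel determinacy to act on.

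The paper does not give its own proof of Theorem~\ref{th:Marks16Edge}; it simply records the statement from Marks~\cite{Marks16}. In Marks' original argument the ambient space must be rich enough that the free part is large while a Borel bipartition is still forced structurally. One way to repair your idea is to work in $\omega^\Gamma$ (or $(2\times\omega)^\Gamma$) and restrict to those $x$ whose first coordinate, or whose value modulo $2$, gives the parity $2$-colouring of $G$; the remaining coordinates then supply enough freedom for both freeness and non-trivial games. But even after fixing the space, your pigeonhole step for edges is only a sketch: the actual argument in~\cite{Marks16} requires a more delicate counting over the $d$ directions simultaneously (showing that Player~II cannot win $G_{i,k}$ for two pairs $(i,k)$ and $(i',k)$ sharing the same colour $k$, and then pigeonholing $d$ Player-I strategies into $2d-2$ colours), and the ``Claim~\ref{cl:X-Y}-style clean-up'' cannot simply be skipped, since one must still verify that the free part of the restricted space is non-empty and that the game outputs land in it.
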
 
	
	\begin{remark} In fact, the bipartite graph in Theorem~\ref{th:Marks16Edge} constructed by Marks also does not admit a Borel perfect matching. Previously, such a graph for $d=2$ was constructed by Laczkovich~\cite{Laczkovich88}. Of course, it does not admit a Borel edge 2-colouring and satisfies Theorem~\ref{th:Marks16Edge} for $d=2$.
		\ecpf\end{remark}
	
	Thus the Borel chromatic number of a Borel graph cannot be  bounded by some function of its chromatic number. 
	However, some bounds can be shown under certain additional assumptions.  For example,  Weilacher~\cite{Weilacher20arxiv} showed by building upon some earlier results of Miller~\cite{Miller09} that if each component of a Borel graph $\C G$ is 2-ended then $\chi_{\C B}(\C G)\le 2\chi(\C G)-1$ (and that, under these assumptions, this bound is in fact best possible). The same bound $\chi_{\C B}(\C G)\le 2\chi(\C G)-1$  was established by Conley,
	Jackson, Marks, Seward and Tucker-Drob~\cite{ConleyJacksonMarksSewardTuckerdrob20arxiv} under the assumption that the Borel asymptotic dimension of $\C G$ is finite. The proofs from both papers can be presented so that we first find a Borel partition $V=A\cup B$ 
	with the induced graphs $\induced{\C G}{N^{\le 1}(A)}$ and $\induced{\C G}{B}$ having finite components only. Then, by Lemma~\ref{lm:FinCompSmooth} and Theorem~\ref{th:Smooth}, there are proper Borel colourings $a:N^{\le 1}(A)\to k$ and $b:B\to \{k,\dots,2k-1\}$ of these graphs, where $k:=\chi(\C G)$. We use $a$ on $A$ and $b$ on $B$, except we can save one colour by recolouring the independent set $b^{-1}(2k-1)$ where we use the colouring $a$ on its vertices with at least one neighbour in $A$ and assign colour 0 to the rest.
	
	\hide{Conley--Kechris~\cite[Theorems 5.1 and 5.11]{ConleyKechris13} showed that $\chi_{\C B}(G)\le \Delta(G)$ for every vertex-transitive graph with weakly 3-connected components or with one-ended components, etc.}

	\section{Borel Equivalence Relations}
	\label{se:CBER}

	An equivalence relation $\C E$ on a standard Borel space $X$ is called \emph{Borel} if it is Borel as a subset of~$X^2$. When we have some notion of isomorphism on a set of structures, it often leads to a Borel equivalence relation. For example, if  $X= R^{n\times n}$ encodes $n\times n$ matrices then the similarity relation can be shown to be Borel (by combining Proposition~20.3 and Example 20.6.(b) from~\cite{Tserunyan19idst}). 
	\hide{Or, when
		studying graphs with at most countably many vertices one can assume that their vertex set is an  initial segment of $\omega$ of the appropriate cardinality, so we can take for $X$ the set of pairs $(n,E)\in (\omega\cup\{\omega\})\times 2^{\omega\times\omega}$ with $E$ forming a graph on~$n$.}
	
	If we view Borel maps as ``computable'' then many ``computational'' questions translate to descriptive set theory problems. 
	For example, the existence of a Borel \emph{selector} for $\C E$ (a map $s:X\to X$ such that $\graph{s}\subseteq \C E $ and $s(X)$ is a transversal of~$\C E$) can be interpreted as being able to ``compute'' one canonical representative from each equivalence class. (To connect this to Section~\ref{se:Smooth}, observe that,  by e.g.\ \Ts{Proposition~20.3}, a Borel equivalence relation admits a Borel selector  if and only if it admits
	a Borel transversal.) The Jordan canonical form is an example of a Borel selector for the above matrix similarity relation (see \Ts{Example 20.6.(b)}). Also, if we have a Borel \emph{reduction} from $(X,\C E)$ to another Borel equivalence relation $(X',\C E')$, that is, a Borel map $r:X\to X'$ such that, for $x,y\in X$, we have $(x,y)\in\C E$ if and only if $(r(x),r(y))\in\C E'$, then we could say that the relation $\C E$ as not ``harder to compute modulo $r$'' than~$\C E'$. A lot of effort in this area went into understanding the hierarchy of possible Borel equivalence relations under the Borel (and some other kinds of) reducibility, see e.g. the survey by Hjorth~\cite{Hjorth10} that concentrates on this aspect.

	A promising field for applying combinatorial methods is the theory of \emph{countable} Borel equivalence relations (\emph{CBERs} for short), where each equivalence class is countable. 
	A more general result of Miller~\cite[Theorem C]{Miller09} implies that every CBER is the connectivity relation of some locally finite Borel graph. 
	So, various questions of descriptive set theory can be approached from the graph theory point of view.
	
	One example of an important and actively studied property is as follows. A CBER $(X,\C E)$  is called \emph{hyperfinite} if there are Borel equivalence relations $F_m\subseteq X\times X$, $m\in\omega$, such that $F_0\subseteq F_1\subseteq F_2\subseteq \ldots$\,, $\cup_{m\in\omega} F_m=\C E$
	and each $F_m$ has finite equivalence classes (or, equivalently, all of size at most $m$, see~\cite[Remark 6.10]{KechrisMiller:toe}). Slaman and Steel~\cite{SlamanSteel88} and Weiss~\cite{Weiss84} showed that the hyperfiniteness of $\C E$ is equivalent to being generated by some Borel action of~$\I Z$. The latter means that there is a Borel bijection $\phi:X\to X$ such that for every $x\in X$ the equivalence class $[x]_{\C E}$ of $x$ is exactly $\{\phi^{n}(x)\mid n\in\I Z\}$. Such a function $\phi$ is easy to find for finite equivalence classes (which generate a smooth equivalence relation by Lemma~\ref{lm:FinCompSmooth}). Thus the main point here is that every infinite class can be bijectively exhausted from any of its elements by applying Borel functions ``next" (namely, $\phi$) and ``previous'' (namely, the inverse $\phi^{-1}$). Although this reformulation of hyperfiniteness looks somewhat similar to smoothness, these two properties behave quite differently.
	For example, Conley, Jackson, Marks, Seward and Tucker-Drob~\cite{ConleyJacksonMarksSewardTuckerdrob20} showed that there is Borel graph $\C G$ satisfying Theorem~\ref{th:Marks16} (resp.\ Theorem~\ref{th:Marks16Edge}) such that its connectivity relation $\C E_{\C G}$ is hyperfinite. See e.g.~\cite[Section~6]{KechrisMiller:toe} for a detailed discussion of hyperfiniteness including the proof of the Slaman--Steel--Weiss Theorem.

	Although each CBER as a graph is just a union of countable cliques, the following theorem of Feldman and Moore~\cite{FeldmanMoore77} (see e.g.\ \cite[Theorem 1.3]{KechrisMiller:toe})
	gives a very useful symmetry breaking tool (in particularly, allowing us to identify vertices from the local point of view of any vertex $x$ by the edge colourings of shortest paths from $x$ and then apply results like an edge coloured version of Lemma~\ref{lm:fS}).

	\begin{theorem}[Feldman and Moore~\cite{FeldmanMoore77}]
		\label{th:FeldmanMoore} For every countable Borel equivalence relation $\C E\subseteq X^2$ 
		on a standard Borel space $X$  
		there is a Borel map $c:\C E\to\omega$ such that every colour class is a matching.\qed
	\end{theorem}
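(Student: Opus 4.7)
The plan is to decompose $\C E \setminus \DIAG_X$ into countably many Borel matchings; the required map $c$ is then obtained by colouring each edge by the index of the matching containing it, while assigning any fixed colour (say $0$) to all pairs in $\DIAG_X$.

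First, I apply the Lusin-Novikov Uniformization Theorem (Theorem~\ref{th:LNU}) to the projection $\pi_0: \C E \setminus \DIAG_X \to X$. Since $\C E$ is countable as an equivalence relation, the fibres of $\pi_0$ are countable, so Theorem~\ref{th:LNU} supplies countably many Borel partial functions $g_n: A_n \to X$ whose graphs are contained in $\C E \setminus \DIAG_X$ and together cover it. A second application of Theorem~\ref{th:LNU}, this time to $\pi_1 \upharpoonright \graph{g_n}$, lets me further split and assume each $g_n$ is a Borel injection with $g_n(x) \neq x$ everywhere on $A_n$.

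Next, for each $n$ I form the auxiliary graph $H_n$ on the Borel set $V_n := A_n \cup g_n(A_n)$ (Borel by Theorem~\ref{th:LNU}) with edge set $\{\{v, g_n(v)\} \mid v \in A_n\}$. Since $g_n$ is an injection, each vertex of $V_n$ is incident to at most two edges of $H_n$---one ``outgoing'' when it plays the role of $v$, one ``incoming'' when it plays the role of $g_n(v)$---so $\Delta(H_n) \le 2$. Lemma~\ref{lm:BorelG} ensures $H_n$ is a Borel graph, and Theorem~\ref{th:ChiB} produces a proper Borel $3$-colouring $\chi_n: V_n \to 3$. For each $k \in 3$, set $B_{n,k} := A_n \cap \chi_n^{-1}(k)$; the properness of $\chi_n$ forces $g_n(B_{n,k}) \cap B_{n,k} = \emptyset$, so the symmetric Borel set
\[
M_{n,k} := \{(x, g_n(x)) \mid x \in B_{n,k}\} \cup \{(g_n(x), x) \mid x \in B_{n,k}\}
\]
is a matching in $\C E$ (it is exactly the graph, minus fixed points, of a Borel involution of $X$ that swaps $B_{n,k}$ with $g_n(B_{n,k})$ and fixes everything else).

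The countable family $\{M_{n,k} \mid n \in \omega,\ k \in 3\}$ covers $\C E \setminus \DIAG_X$. Fixing a bijection $\omega \times 3 \to \omega$, I let $c(x,y)$ be the image of the lexicographically smallest $(n,k)$ with $(x,y) \in M_{n,k}$, and set $c \equiv 0$ on $\DIAG_X$. This $c$ is Borel by a routine verification and every colour class is a matching. The main conceptual step is the passage to the bounded-degree auxiliary graph $H_n$, which reduces everything to the already-established chromatic bound Theorem~\ref{th:ChiB}; the rest is bookkeeping with Lusin-Novikov sections.
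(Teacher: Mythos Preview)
The paper does not actually supply its own proof of Theorem~\ref{th:FeldmanMoore}: the statement ends with \verb|\qed| and a pointer to \cite[Theorem~1.3]{KechrisMiller:toe}. So there is nothing in the paper to compare your argument against.

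Your argument is correct and is essentially the standard proof. Two small remarks. First, the appeal to Lemma~\ref{lm:BorelG} is unnecessary: the edge set of $H_n$ is directly the symmetrisation of the graph of the Borel function $g_n$, hence Borel by Lemma~\ref{lm:BorelF}. Second, the $3$-colouring step via Theorem~\ref{th:ChiB} is a clean way to turn a Borel injection with $g_n(x)\ne x$ into matchings; the textbook presentations (e.g.\ Kechris--Miller) do the same splitting by a slightly more hands-on manipulation of the sets $A_n$, $g_n(A_n)$ and their intersection, but your route is equivalent and arguably tidier given the tools already developed in the paper.
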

	
	Given the edge colouring $c:\C E\to\omega$ returned by the Feldman--Moore Theorem, we can encode each matching $c^{-1}(i)$ by an involution $\phi_i:X\to X$ that swaps every pair $x,y\in X$ with $c(x,y)=i$ and fixes every remaining element of~$X$. Thus if $\Ga$ is the group generated by the bijections $\phi_i:X\to X$ for $i\in\omega$, then the equivalence classes of $\C E$ are exactly the orbits of the action of $\Gamma$ on~$X$. Thus every CBER comes from a Borel action of a countable group, giving another very fruitful connection.

	The books by Gao~\cite{Gao09idst} and Kechris and Miller~\cite{KechrisMiller:toe} provide an introduction to Borel equivalence relations (from the point of view of group actions). See also the recent survey of results on CBERs by Kechris~\cite{Kechris19cber}.

	\section{Baire Measurable Combinatorics}
	\label{se:Baire}
	
	Recall that a subset $A$ of a Polish space $X$ has \emph{the property of Baire} if it is the symmetric difference of a Borel set and a set which is \emph{meager} (that is, a countable union of nowhere dense sets). Such a set $A$ is also often called \emph{Baire measurable}. An equivalent (and, from some points of view, more natural) definition  is that $A$ is the symmetric difference of an open set and a meager set.  Note that this property is not determined by the Borel $\sigma$-algebra $\C B(X)$ alone (that is, it depends in general on the topology on~$X$). For an introduction to these concepts from the descriptive set theory point of view, we refer to~\cite{Kechris:cdst} or~\cite{Tserunyan19idst}.
	\hide{By restricting ourselves to Polish spaces, we gain a number of powerful tools such as, for example, the Baire Category Theorem (see e.g.\ \Ke{Theorem~8.4} or \Ts{Theorem~6.12}) which states that a non-empty open subset of a Polish space cannot be meager.}

	Meager (resp.\ Baire measurable) sets can be considered as topologically ``negligible'' (resp.\ ``nice''). 
	One can show by using the Axiom of Choice that there are subsets $\I R$ without the property of Baire,
	see e.g.~\Ke{Example 8.24} or~\cite[Chapter~5]{Oxtoby:mc}.
	So various questions to find satisfying assignments $a:V\to\omega$ in a Borel graph $\C G$ with each preimage in $\C T$ are meaningful and interesting.
	A typical strategy is to construct a Borel assignment apart of a meager set $Y$ of vertices. Ideally, the remaining set $Y$ is \emph{invariant} (i.e.\ $Y=[Y]$); then $\induced{a}{Y}$ can be defined independently of the rest of the vertex set, e.g.\ by using the Axiom of Choice. The reader should be aware that, in general locally finite Borel graphs, the neighbourhood of a meager set (resp.\ a set with the property of Baire) need not be meager (resp.\ have the property of Baire); however, these properties do hold in many natural situations (e.g.\ when $\C G$ is the Schreier graph of a marked group acting by homeomorphisms). So, usually, one works with (partially defined) Borel assignments.
	
	One method of how to deal with this technical issue is the following useful lemma of Marks and Unger~\cite[Lemma~3.1]{MarksUnger16}: for every locally finite Borel graph $\C G$ and any function $f:\omega\to \omega$ there are Borel sets $A_n$, $n\in \omega$, such that each $A_n$ is $f(n)$-sparse and the complement of their union, $V\setminus \cup_{n\in\omega} A_n$, is a meager and invariant set. It is used by Marks and Unger~\cite[Theorem 1.3]{MarksUnger16} to prove the following ``topological'' version of Hall's marriage theorem. Let us say that a bipartite graph $G$ with a bipartition $V=B_0\cup B_1$ satisfies $\mathrm{Hall}_{\e,n}$ if for every finite set $X$ in a one part we have $|N(X)|\ge |X|$ (which is the usual Hall's marriage condition) and, additionally, if $|X|\ge n$ and $X$ is connected in $\C G^2$ then $|N(X)|\ge(1+\e)|X|$.
	
	\begin{theorem}[Marks and Unger~\cite{MarksUnger16}]
		\label{th:BaireM} If $\e>0$, $n\in\omega$, $V$ is a Polish space, and $\C G=(V,E,\C B)$ is a locally finite Borel graph with a Borel bipartition $V=B_0\cup B_1$ satisfying $\mathrm{Hall}_{\e,n}$
		then $\C G$ has a Borel matching such that the set of unmatched vertices is meager and invariant.\end{theorem}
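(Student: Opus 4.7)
The plan is to combine the Elek--Lippner-type Borel augmentation of Theorem~\ref{th:ElekLippner} with the sparsification lemma of Marks and Unger stated immediately above the theorem: for any $f:\omega\to\omega$, there are Borel sets $A_n\subseteq V$, each $f(n)$-sparse in $\C G$, with $Z:=V\setminus\bigcup_{n\in\omega}A_n$ meager and $\C G$-invariant. First I would fix a sufficiently fast-growing $f$ and apply the sparsification lemma to obtain such sets $A_n$.

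Then I would inductively build Borel matchings $M_k$ starting from $M_0=\emptyset$; at stage $k+1$, I update $M_k$ simultaneously by augmenting along a shortest augmenting path of length at most $f(k)/4$ from each unmatched $v\in A_k\cap B_0$, breaking ties lexicographically via the Borel total order of Lemma~\ref{lm:I}. The $f(k)$-sparsity of $A_k$ forces distinct augmenting paths to be vertex-disjoint, so the augmentations commute; the selection of the shortest lexicographic path is implementable as a local rule on $(\C G,(\ell,M_k))$ for a fixed Borel auxiliary labelling $\ell$, and so Lemma~\ref{lm:LocalRule} gives Borelness of $M_{k+1}$. A symmetric second pass then targets unmatched vertices in~$B_1$.

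The combinatorial role of $\mathrm{Hall}_{\e,n}$ is to control which vertices succeed at their processing stage. For an unmatched $v\in B_0$ and any matching $M'$, the alternating BFS from $v$ accumulates sets $X_k\subseteq B_0$, $Y_k\subseteq B_1$ with the property that, if no augmenting path of length $\le 2k+1$ exists, then $N(X_k)\subseteq Y_k$, $|X_k|=|Y_{k-1}|+1$, and $X_k$ is connected in $\C G^2$. The strict Hall inequality then forces $|X_k|\ge(1+\e)|X_{k-1}|$ once $|X_k|\ge n$. Setting $M:=\lim_k M_k$, which is Borel by Corollary~\ref{cr:Finitary} (the matching value at each vertex stabilizes by local finiteness of $\C G$ combined with the bounded interaction radius of each augmentation), the unmatched set consists of $Z$ together with the ``bad'' set of vertices lying in some $A_k$ at whose stage no short augmenting path was available.

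The main obstacle will be showing that this ``bad'' set is meager. Unlike in finite graphs, the $\mathrm{Hall}_{\e,n}$ expansion forces alternating trees to grow exponentially but does not bound augmenting path length absolutely---in an infinite graph the alternating BFS can propagate forever without terminating in an unmatched vertex. Hence meagerness must be extracted by a genuine Baire category argument along the sparse layers: informally, a non-meager bad intersection with some basic open $U$ would, via the $(1+\e)$-expansion and the $f(n)$-sparsity of the $A_n$, yield a non-meager invariant family of vertices whose alternating trees fit into uniformly local neighborhoods, contradicting the sparsification. Making this Baire-category accounting precise, and scheduling the $B_0$- and $B_1$-stages so that neither side accumulates uncorrectable deficiencies, is the technical core of the argument.
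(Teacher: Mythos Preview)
Your approach diverges from the paper's in a way that leaves a real gap, precisely at the point you flag as ``the technical core''. The paper does \emph{not} use augmenting paths at all. Instead, at Stage~$i$ each vertex $x$ of the current graph $\C G_i$ that lies in the sparse set $A_i$ picks a single neighbour $y_x$ with the property that $\C G_i$ still has a perfect matching containing $\{x,y_x\}$; equivalently, removing $x$ and $y_x$ preserves Hall's condition. One then sets $M_{i+1}:=\{\{x,y_x\}: x\in A_i\cap V(\C G_i)\}$ and deletes all newly matched vertices to form~$\C G_{i+1}$. The combinatorial content of $\mathrm{Hall}_{\e,n}$ is used to show, by induction on~$i$, that $\C G_i$ satisfies $\mathrm{Hall}_{\e_i,f(i)}$ with $\e_i:=\e-\sum_{j<i}8/f(j)>0$ (the $f(i)$-sparseness of $A_i$ limits the damage each stage does to the expansion constant). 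Since each $\C G_i$ therefore satisfies ordinary Hall, Rado's theorem guarantees that $y_x$ exists for \emph{every} $x$, and Example~\ref{ex:PerfectM} shows that $x\mapsto y_x$ is computable by a finitary rule, hence Borel. The upshot is that every vertex in $\bigcup_i A_i$ is matched by $M:=\bigcup_i M_i$; there is no ``bad set'' whatsoever, and the unmatched set is exactly the meager invariant set $V\setminus\bigcup_i A_i$ handed to you by the sparsification lemma.

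Your augmenting-path scheme, by contrast, genuinely requires the step you could not complete: you must show that the set of $v\in A_k$ admitting no augmenting path of length $\le f(k)/4$ is meager. As you correctly observe, $\mathrm{Hall}_{\e,n}$ forces the alternating tree from an unmatched vertex to grow exponentially, but in an infinite graph this does not force it to hit an unmatched vertex at all---the tree can expand forever inside the matched region. Your sketched ``Baire-category accounting'' is not an argument; I do not see how sparsity of the $A_n$ alone converts exponential growth of alternating trees into meagerness of the failure set. There is also a secondary issue: with augmenting paths of unbounded length a fixed vertex can lie on augmenting paths at infinitely many stages, so your stabilisation claim for $\lim_k M_k$ needs more than local finiteness. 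The paper sidesteps both problems by never augmenting: once a vertex is matched it is removed, so stabilisation is automatic and no Baire-category tail argument is needed.
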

	
	In brief, the proof of Theorem~\ref{th:BaireM} proceeds as follows. Given $\e$ and $n$, choose a fast growing sequence $f(0)\ll f(1)\ll f(2)\ll \dots$ and let $A_i$, $i\in\omega$, be the sets returned by~\cite[Lemma~3.1]{MarksUnger16} for this function~$f$. Starting with the empty matching $M_0:=\emptyset$ and $\C G_0:=\C G$,
	we have countably many stages indexed by $i\in\omega$. At Stage~$i$, every vertex $x$ of $\C G_i$ in the $f(i)$-sparse set $A_i$ picks a neighbour $y_x\in N(x)$ such that $\C G_i$ has a perfect matching containing the edge $\{x,y_x\}$, say we take the largest such $y_x$ with
	respect to some fixed Borel total order on~$V$.
	Define $M_{i+1}:=\{\,\{x,y_x\}\mid x\in A_i\cap V(\C G_i)\}$ and let $\C G_{i+1}$ be obtained from $\C G_{i}$ by removing all vertices matched by $M_{i+1}$. A combinatorial argument
	shows by induction on $i\in\omega$ that $\C G_i$ satisfies $\mathrm{Hall}_{\e_i,f(i)}$, where $\e_i:=\e-\sum_{j\in i} 8/f(j)>0$. In particular, each graph $\C G_i$ satisfies the usual Hall's marriage condition. Thus, by Rado's theorem (Theorem~\ref{th:Rado42}), $y_x$ exists for every~$x$ (and, by induction, we can carry out each stage). Moreover, the function $x\mapsto y_x$ of Stage~$i$ is Borel by induction on $i$ as it can be computed by a finitary rule on $(\C G_i,M_i)$, as it was demonstrated in Example~\ref{ex:PerfectM}. (In fact, we do not need to refer to Rado's theorem at all: when defining $y_x$, we can instead require that the graph obtained from $\C G_i$ by removing the adjacent vertices $x$ and $y_x$ satisfies 
	Hall's marriage condition.)
	Finally, $M:=\cup_{i\in\omega} M_i$ has all the required properties.

	Note that the matching $M$ returned by Theorem~\ref{th:BaireM} can be extended to cover all vertices, using Rado's theorem (Theorem~\ref{th:Rado42}); this application of the Axiom of Choice is restricted to a meager set. Thus if we encode the final perfect matching via a vertex labelling $\ell:V\to\omega$ as in Remark~\ref{rm:Reduction} then each preimage of $\ell$ has the property of Baire.

	A notable general result that is very useful in Baire measurable combinatorics is that every 
	CBER $\C E$ on a Polish space $X$ can be made hyperfinite by removing an $\C E$-invariant meager Borel subset of $X$; for more details see e.g.\ \cite[Theorem~12.1]{KechrisMiller:toe} 

	\section{$\mu$-Measurable Combinatorics}
	\label{se:Measurable}
	
	Suppose that we have a Borel locally finite graph $\C G=(V,E,\C B)$ and a measure $\mu$ on~$(V,\C B)$.
	Let $\C B_\mu$ be the \emph{$\mu$-completion of $\C B$} which is the smallest $\sigma$-algebra on $V$ containing  Borel sets and all \emph{$\mu$-null} sets (i.e.\ arbitrary subsets of Borel sets of $\mu$-measure 0). Recall from the Introduction that the sets in $\C B_\mu$ are called \emph{$\mu$-measurable} or just \emph{measurable}. 
	
	Of course, the presence of a measure $\mu$
	makes the set of questions that can be asked and the tools that can be applied much richer. 
	For example, the problems that we considered in this paper also make sense in the measurable setting, where we look for assignments $a$ that are \emph{measurable} as functions from $(V,\C B_\mu)$ to $\omega$, meaning here that $a^{-1}(i)\in\C B_\mu$ for each $i\in\omega$.

	Another studied possibility is to consider the so-called approximate versions: for example, the \emph{approximate $\mu$-measurable chromatic number} is the smallest $k$ such that for every $\e>0$ there is a Borel set of vertices $A\subseteq V$ of measure at most $\e$ such that the graph induced by $V\setminus A$ can be coloured with at most $k$ colours in a Borel way. For example, the approximate measurable chromatic number of the irrational rotation graph $\C R_\alpha$ from Example~\ref{ex:Rotation} is~2 because the Lebesgue measure of one colour class, namely $X_2=[0,c)$, in the constructed Borel 3-colouring of $\C R_\alpha$ can be chosen to be arbitrarily small. The survey by Kechris and Marks~\cite{KechrisMarks:survey} gives an overview of such results as well.

	For the reader who, inspired by this paper, would like to read more on the topic, let us point some technical subtleties that are sometimes not mentioned explicitly in the literature. The measure $\mu$ is (almost) always assumed to be \emph{$\sigma$-finite}, meaning that $V$ can be covered by countably many sets of finite $\mu$-measure. This implies  many important properties such as the regularity of $\mu$  
	(\Co{Proposition 8.1.2}), being able to talk about the product of $\mu$ with other measures without the complications of going through the so-called complete locally determined products (see \cite[Chapter 25]{Fremlin03mt2}),
	etc.  Also, 
	there is a simple trick (see e.g.\ \cite[Proposition~3.2]{GrebikPikhurko20}) that
	allows us to construct another  measure $\nu$ on $(V,\C B)$ with $\C B_\nu\subseteq\C B_\mu$ such that $\nu$ is \emph{quasi-invariant} (meaning that the saturation $[N]\OptionalCG $ of any $\nu$-null set $N\subseteq V$ is a $\nu$-null set). Thus it is enough to find a measurable satisfying assignment when the measure is quasi-invariant. The advantage of the quasi-invariance of $\nu$  is that the neighbourhood and the saturation of any $\nu$-null (resp.\ $\nu$-measurable) set is also $\nu$-null (resp.\ $\nu$-measurable). The reader should also be aware that the measurability of an edge labelling $c:E\to\omega$ is understood as the measurability of the vertex labelling $\ell:V\to \omega$ that encodes $c$ under some fixed local rule as in Remark~\ref{rm:Reduction}. This is equivalent to requiring that, there is a Borel $\mu$-null set $N\subseteq V$ such that $\induced{c}{E\cap (V\setminus N)^2}$ is Borel. Note that it is not a good idea to define the measurability of $c:E\to\omega$ with respect the product measure $\mu\times \mu$ on $V^2\supseteq E$: if $\mu$ is atomless
	then $(\mu\times \mu)(E)=0$ by Tonelli's theorem and every subset of $E$ is $(\mu\times\mu)$-measurable. 
	
	A particularly important case is when $\mu$ is a \emph{probability measure} (that is, $\mu(V)=1$) which is moreover \emph{invariant}, meaning that every Borel map $f:V\to V$ with $\graph{f}\subseteq \C E_{\C G}$ preserves the measure~$\mu$. 
	(The reader should be able to show that it is enough to check the above property only for involutions $f$ with $\graph{f}\subseteq E$, that is, functions that come from matchings in~$\C G$.)
	In this case, the quadruple $(V,E,\C B,\mu)$ is now often called a \emph{graphing}. (The reader should also be aware of another different usage of this term, where a \emph{graphing} of an equivalence relation $\C E$ means a Borel graph $\C G$ whose connectivity relation $\C E_{\C G}$ coincides with~$\C E$.)
	
	The invariance of $\mu$ is a measure analogue of the obvious fact from finite combinatorics that any bijection preserves the sizes of finite sets. It has many equivalent reformulations such as, for example, the Mass Transport Principle (see e.g.\ \Lo{Section 18.4.1}).

	For an invariant probability measure $\mu$ on the vertex set $V$, one can define a new measure $\eta$ on edges
	where the $\eta$-measure of a Borel subset $A\subseteq E$ is defined to be $\int_{V}|\{y\in V\mid (x,y)\in A\}|\, \dd\mu(x)$, the average $A$-degree. Then, in fact, the invariance of $\mu$ is equivalent to $\eta$ being \emph{symmetric} (meaning that $\eta(A)$ is always the same as  the $\eta$-measure of the ``transpose'' $\{(x,y)\mid (y,x)\in A\}$ of $A$), 
	see e.g.\ \Lo{Section 18.2}. Interestingly, the $\eta$-measurability of an edge labelling $c:E\to\omega$ now coincides with the $\mu$-measurability of the vertex labelling $\ell:V\to\omega$ that encodes~$c$ under the Borel reduction of Remark~\ref{rm:Reduction}.
	
	Graphings can serve as the local limits of bounded degree graphs, roughly speaking as follows.
	An \emph{$r$-sample} from a graphing is obtained by sampling a random vertex $x\in V$ under the probability measure $\mu$
	and outputting $\RR F_r(x)$, the isomorphism type of the rooted graph induced by the $r$-ball around~$x$. Also,
	each finite graph $(V,E)$ can be viewed as a graphing $(V,E,2^V,\nu)$ where $\nu$ is the uniform measure on the finite set~$V$. Then the \emph{local} convergence can be described by a metric where two graphings are ``close'' if the distributions of their samples are ``close'' to each other, see~\Lo{Section 18} for an introduction to graphings as limit objects.
	
	Also, 
	the Schreier graph of any Borel probability measure-preserving action of a marked group is a graphing. It contains a lot of information about the action and thus is an important object of study in \emph{measured group theory}. Even for such a simple group as the integers~$\I Z$, its measure-preserving actions (which are specified by giving just one measure-preserving transformation) form a very beautiful and deep subject that is the main focus of the classical ergodic theory. 
	It is hard to pick a good starting introductory point for this vast area. The reader
	is welcome to consult various surveys and textbooks, 
	e.g.~\cite{AbertGaboriauThom17or,Furman11,Gaboriau10,
		KerrLi19egid,Loh20etmgh,
		Shalom05}, and pick one (or its part) that looks most interesting.

	A special but important case of a graphing is the Schreier graph $\C S$ of the shift action of a marked group $(\Gamma;S)$ on the product measure space $X^\Gamma$ where $X$ is a standard probability space, e.g.\ $\{0,1\}$ or $[0,1]$ with the uniform measure. Measurable labellings of $\C S$ are exactly the so-called \emph{factors of IID} labellings with vertex seeds from $X$ of the Cayley graph of~$(\Ga,S)$. This connection gives a way of applying methods of descriptive combinatorics for constructing  various invariant processes on vertex-transitive countable graphs. This is a very active area of discrete probability (see e.g.\ the book by Lyons and Peres~\cite{LyonsPeres16ptn})
	where even the case of trees has many tantalising unsolved questions (see e.g.\ Lyons~\cite{Lyons17}). 
	
	As a showcase of how the results that we have proved can be used in the measurable setting, let us present a very brief outline of the following ``measurable'' version of Hall's marriage theorem by Lyons and Nazarov~\cite[Remark~2.6]{LyonsNazarov11} (whose detailed proof can be found in~\cite[Theorem~3.3]{GrabowskiMathePikhurko:expansion}).
	
	\begin{theorem}[Lyons and Nazarov \cite{LyonsNazarov11}]
		\label{th:LN} Let $\e>0$ and let $\C G=(V,E,\C B,\mu)$ be a bipartite graphing with a Borel bipartition $V=B_0\cup B_1$ such that $\mu(B_0)=\mu(B_1)=1/2$ and for every measurable $X$ inside a part it holds that 
		$$
		\mu(N(X))\ge \min((1+\e)\mu(X),1/4+\e).
		$$ 
		Then $\C G$ has a Borel matching that covers all vertices except a null set.
	\end{theorem}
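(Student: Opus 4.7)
The plan is to produce a Borel matching $M^*$ of $\C G$ with no augmenting path of any finite length, and then show via the measurable expansion hypothesis that its unmatched set has $\mu$-measure zero. To build $M^*$, I would iteratively apply Theorem~\ref{th:ElekLippner} with $\RT = {}$``is a matching'' (encoded as in Remark~\ref{rm:Reduction}) and progress LCL $\RR P = {}$``is matched''. Starting from $M_0 = \emptyset$ and applying Theorem~\ref{th:ElekLippner} with augmentation radius $r$ gives a Borel matching $M_r$ with no augmenting set of size $\leq r$, which in the matching case is equivalent to the absence of augmenting paths of length $\leq r-1$ (since any size-increasing modification of a matching contains an augmenting path in its symmetric difference with the original). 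Because any matching-augmentation preserves the set of matched vertices (a matched vertex stays matched, perhaps with a new partner), iterating $r \to \infty$ produces a nested Borel sequence of matched sets; a limiting construction, using the Borel certificate maps $f_j$ from Theorem~\ref{th:ElekLippner} to control the rewiring, yields a single Borel matching $M^*$ with no augmenting path of any finite length.

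The core of the proof is then to show that such $M^*$ necessarily has $\mu(U(M^*)) = 0$. Let $U_0 := U(M^*) \cap B_0$ and $U_1 := U(M^*) \cap B_1$; invariance of $\mu$ under the matching (combined with $\mu(B_0) = \mu(B_1) = 1/2$) gives $\mu(U_0) = \mu(U_1)$. Let $C \subseteq B_0$ consist of all vertices alternating-reachable from $U_0$. This set is Borel, as the countable union of the sets of vertices reachable by alternating paths of each fixed length, which in turn are Borel by Lemma~\ref{lm:fS} applied after encoding the matching as a vertex labelling. Since $M^*$ has no augmenting path, the matching induces a Borel bijection $N(C) \to C \setminus U_0$, so $\mu(N(C)) = \mu(C) - \mu(U_0)$. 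The measurable expansion hypothesis gives $\mu(N(C)) \geq \min\bigl((1+\e)\mu(C),\, 1/4 + \e\bigr)$. In the first case, $(1+\e)\mu(C) \leq \mu(C) - \mu(U_0)$ immediately forces $\mu(U_0) = 0$.

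In the remaining case $\mu(N(C)) \geq 1/4 + \e$, the symmetric construction on the $B_1$-side yields a Borel set $D \subseteq B_1$ alternating-reachable from $U_1$ with $\mu(N(D)) = \mu(D) - \mu(U_1) \geq 1/4 + \e$, hence $\mu(D) \geq 1/4 + \e + \mu(U_0)$. Since $N(C), D \subseteq B_1$ and $\mu(B_1) = 1/2$, the intersection $N(C) \cap D$ has measure at least $2\e + \mu(U_0)$, which is positive as soon as $\mu(U_0) > 0$. Any vertex $v$ in this intersection satisfies $M^*(v) \in C$ (so $M^*(v)$ is alternating-reachable from $U_0$) and $v$ itself is alternating-reachable from $U_1$; concatenating the two alternating paths yields an alternating walk between $U_0$ and $U_1$ in $\C G$, and a standard combinatorial argument extracts from it a genuine augmenting path for $M^*$, contradicting the defining property of $M^*$. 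Hence $\mu(U_0) = 0$, and $U(M^*)$ is $\mu$-null.

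The main obstacle I anticipate is the limiting construction of $M^*$: while the matched sets $M(M_r)$ are monotone in $r$, the actual matching edges need not be, since Elek--Lippner-style augmentations can rewire previously matched pairs. One remedy is to enforce a fixed lexicographic tie-break using a Borel total order on $V$ (as is already present in the proof of Theorem~\ref{th:ElekLippner}) and use the maps $f_j$ to bound the number of times each vertex's partner can change, showing that the partners stabilize $\mu$-almost everywhere. An alternative route is to run the measurable Hall argument already at finite $r$ and bound $\mu(U(M_r))$ by a quantity tending to zero, then produce $M^*$ by a diagonal extraction inside the limit of matched sets. A secondary subtlety is extracting a simple augmenting path from the combined alternating walk (which may revisit vertices); this is handled by the standard fact that any alternating walk joining two unmatched vertices on opposite sides of a bipartite graph contains an augmenting path.
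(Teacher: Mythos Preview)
Your Hall-type argument in the second half is correct, and can in fact be simplified: one can show directly that $N(C)\cap D=\emptyset$. Indeed, $N(C)$ is exactly the $B_1$-part of the set $A$ alternating-reachable from $U_0$, and $D$ is the $B_1$-part of the set $A'$ alternating-reachable from $U_1$; a minimal-distance argument (take $v\in A\cap A'$ at minimal alternating distance from $U_0$ and check that its predecessor on the alternating path also lies in $A'$) shows $A\cap A'=\emptyset$ whenever $M^*$ has no augmenting path. So no walk-to-path extraction is needed, and the inequality $\mu(N(C)\cap D)\ge 2\e>0$ is already the contradiction.

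The genuine gap is in the first half: you have not constructed a Borel matching $M^*$ with no augmenting path of any finite length, and this is where all the difficulty lies. Your remedy (a), using the certificate maps $f_j$ from Theorem~\ref{th:ElekLippner}, only tells you that the set where $M_r$ and $M_{r+1}$ differ has measure at most $(r+1)$ times the measure of the vertices newly matched at stage $r$; to sum this over $r$ and invoke Borel--Cantelli you already need a bound on $\mu(U(M_r))$ decaying faster than $1/r$. Your remedy (b), bounding $\mu(U(M_r))$ at finite $r$, is exactly what the paper does and is where the real content is: one shows (the ``nice combinatorial exercise'') that in an $\e$-expander any matching with no augmenting path of length at most $2i+1$ leaves unmatched measure at most $(1-c)^i$ for some $c=c(\e)>0$; this is the finite-depth version of your Hall argument, tracking geometric growth of the alternating-reachable set for $i$ steps. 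Combined with the $f_j$-bound on changes, Borel--Cantelli then shows that $M:=\liminf_i M_i$ has null unmatched set. Note that this $M$ may well have augmenting paths --- your limiting Hall argument is never invoked, and the object $M^*$ is never actually built. So your route (b) is the right one, but once taken, the destination $M^*$ becomes unnecessary.
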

	
	In order to prove Theorem~\ref{th:LN}, we start with the empty matching $M_0$ and, iteratively for each $i\in\omega$, augment $M_i$ to $M_{i+1}$ in a Borel way using augmenting paths of (edge) length at most~$2i+1$, until none remains. This can done by the result of Elek and Lippner~\cite{ElekLippner10}. (Alternatively, we can use
	Theorem~\ref{th:ElekLippner} here, applying  all possible $(2i+2)$-augmentations to $M_i$, with the rest of the proof being the same.) It is a nice combinatorial exercise to show that for every $\e>0$ there is $c>0$ such that if a finite bipartite graph $G$ with both parts of the same size $n$ is an \emph{$\e$-expander} (i.e.\ the neighbourhood of any set $X$ in a part has size at least $\min((1+\e)|X|, (1/2+\e)n)$), then any matching without augmenting paths of length at most $2i+1$ covers all except at most $(1-c)^in$ vertices of $G$. The proof of this statement from \cite{LyonsNazarov11} extends from finite graphs to the measurable setting since all inequalities used by it come from double counting and,  by the invariance of~$\mu$, also apply when the relative sizes of sets are formally replaced by their measures. Thus the measure of the set $X_i$ of vertices unmatched by the Borel matching $M_i$ in $\C G$ is at most $(1-c)^i$. When we do augmentations to construct $M_{i+1}$, we change the current matching on at most $2i+2$ vertices per one new matched vertex by the second part of Theorem~\ref{th:ElekLippner}. Again, by the invariance of~$\mu$, the measure of vertices where $M_i$ and $M_{i+1}$ differ is at most $(2i+2) (1-c)^i$. Define the final Borel matching 
	$$
	M:=\textstyle \liminf_i M_i=\cup_{i\in\omega}\cap_{j\ge i} M_j
	$$ 
	to be the pointwise limit of~the matchings~$M_i$ where they stabilise. Let $X$ be the (Borel) set of vertices unmatched by $M$. For every $i\in\omega$ the following clearly holds: if a vertex is not matched by $M$ then it is not matched by $M_i$ or it witnesses at least one change (in fact, infinitely many changes) after Stage~$i$. Thus the Union Bound gives that $\mu(X)\le (1-c)^i+ \sum_{j\ge i} (2j+2)(1-c)^j$. Since this inequality is true for every $i$ and its right-hand side can be made arbitrarily small by taking $i$ sufficiently large, we conclude that $X$ has measure 0, as desired. (The reader may have recognised the last two steps as a veiled application of the Borel--Cantelli Lemma.)

	\section{Borel Colourings from \Local\ Algorithms}
	\label{se:Bernshteyn20arxiv}
	
	Under rather general settings, local rules are equivalent to the so-called deterministic \Local\ algorithms that were introduced by Linial~\cite{Linial87,Linial92}. Their various variants have been actively studied in theoretical computer science; for an introduction, we refer the reader to the book by Barenboim and Elkin~\cite{BarenboimElkin13dgc}. Here we briefly discuss this connection and present a result of Bernshteyn~\cite{Bernshteyn20arxiv} that efficient \Local\ algorithms can be used to find satisfying assignments that are Borel.
	
	Suppose that we search for an assignment $a:V\to\omega$ that solves a given LCL $\RT$ on a graph $G$ (for example, we would like $a$ to be a proper vertex colouring with $k$ colours) where, just for the clarity of presentation, we assume that $\RT$ is defined on unlabelled graphs.
	A \emph{deterministic \Local\ algorithm with $r$ rounds} is defined as follows. 
	Each vertex $x$ of $G$ is a processor with unlimited computational power. There are $r$ synchronous rounds. In each round, every vertex can exchange any amount of information with each of its neighbours. After $r$ rounds, every vertex $x$ has to  output its own value $a(x)$, with all vertices using the same algorithm for the communications during the rounds and the local computations.

	Clearly, the final value $a(x)$ is some function of the $r$-ball of $x$ so the produced assignment is given by some $r$-local rule. Conversely, for every $r$-local rule $\RR A$, a possible strategy is that each vertex $x$ collects all current information from all its neighbours in each round and computes $\RR A(x)$ at the end of $r$ rounds, by knowing everything about its whole $r$-ball $N^{\le r}(x)$.  Given this equivalence, we will use mostly the language of local rules.

	Such a rule need not exist if there are symmetries. For example, if the input graph is vertex-transitive then all vertices produce the same answer so there is no chance to find, for example, a proper vertex colouring. Let us assume here that each vertex $x$ is given
	the order $n$ of the graph and its unique identifier $\ell(x)\in n$. Thus we evaluate the rule $\RR A$, that may depend on $n$, on the labelled graph $(G,\ell)$.
	
	The corresponding algorithmic question is, for a given family of graphs $\CH$ (which we assume to be closed under adding isolated vertices) and an LCL $\RT$, to estimate $\Det_{\RT,\CH}(n)$, the smallest $r$ for which there is an $r$-local rule $\RR A$ such that,
	for every graph $G\in \CH$ with $n$ vertices and every bijection $\ell:V\to n$, 
	the assignment $\RR A(G,\ell)$ solves~$\RT$ on~$G$.  If there is some input $(G,\ell)$ as above which admits no $\RT$-satisfying assignment, then we define $\Det_{\RT,\CH}(n):=\omega$. 
	Note that the value of $\Det_{\RT,\CH}(n)$ will not change if we modify the above definition by allowing to take any graph $G\in\CH$ with at most $n$ vertices and any injection $\ell:V\to n$ (because we can always add isolated vertices to $G$ and extend $\ell$ to a bijection).
	
	Note that if $\Det_{\RT,\CH}(n)$ is finite then it is at most $n-1$. Indeed, by being able to see at distance up to $n-1$ (and knowing $n$), each vertex $x$ knows its injectively labelled component $\induced{(G,\ell)}{[x]}$; thus a possible $(n-1)$-local rule $\RR A$ is that $x\in V$ computes the lexicographically smallest (under the ordering of $[x]$ given by the values of $\ell$) $\RT$-satisfying assignment $a:[x]\to \omega$ and outputs~$a(x)$ as its value.

	The following result is a special case of~\cite[Theorem 2.10]{Bernshteyn20arxiv} whose proof, nonetheless, contains the main idea.

	\begin{theorem}[Bernshteyn~\cite{Bernshteyn20arxiv}]
		\label{th:LocBorel} Let $\CH$ be a family of graphs with degrees bounded by $d$ which is closed under adding isolated vertices. Let $\RT$ be an LCL on unlabelled graphs such that
		$\Det_{\RT,\CH}(n)=o(\log n)$ as $n\to\infty$. Then every Borel graph $\C G=(V,E,\C B)$ such that $\induced{\C G}{N^{\le r}(x)}\in \CH$ for every $x\in V$ and $r\in \omega$, admits a Borel assignment $a:V\to\omega$ that solves~$\RT$ on~$\C G$.\end{theorem}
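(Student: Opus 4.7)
The approach is to replace the injective integer identifiers that a \Local\ algorithm expects on its input by a Borel sparse colouring of $\C G$, which supplies locally injective labels. Since the \Local\ algorithm's output at a vertex depends only on its $r$-ball and the verifier for $\RT$ at a vertex depends only on its $t$-ball (where $t$ is the radius of $\RT$), only distinctness of labels inside each $(r+t)$-ball will be needed, not a global bijection onto $\{0,\dots,n-1\}$.

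Concretely, let $t$ be the radius of $\RT$. The $(r+t)$-ball around any vertex of $\C G$ has at most $1+d\sum_{i=0}^{r+t-1}(d-1)^i$ vertices, and Corollary~\ref{cr:RSparse} produces a Borel $2(r+t)$-sparse colouring of $\C G$ with at most $1+d\sum_{i=0}^{2(r+t)-1}(d-1)^i=O((d-1)^{2(r+t)})$ colours. Using $\Det_{\RT,\CH}(n)=o(\log n)$, I would pick $n$ so large that, setting $r:=\Det_{\RT,\CH}(n)$, this colour bound is at most~$n$. Fix the corresponding Borel colouring $c:V\to n$, and let $f_n$ be the $r$-local rule realised by the assumed \Local\ algorithm on inputs of size~$n$. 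Define $a:V\to\omega$ by letting $a(x)$ be the value that $f_n$ returns at the root when applied to $\induced{(\C G,c)}{N^{\le r}(x)}$. By Lemma~\ref{lm:LocalRule} applied to the Borel labelling $c$, the function $a$ is Borel.

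To see that $a$ solves $\RT$, fix $x\in V$ and let $G_x:=\induced{\C G}{N^{\le r+t}(x)}$, which lies in $\CH$ by hypothesis. Closure under adding isolated vertices lets us extend $G_x$ to $G_x'\in\CH$ with exactly $n$ vertices. Because $c$ is injective on $N^{\le r+t}(x)$ (any two distinct vertices there are within distance $2(r+t)$ in $\C G$), the restriction of $c$ to $V(G_x)$ extends to a bijection $\ell':V(G_x')\to n$. For each $y\in N^{\le t}(x)$, its $r$-ball inside $(G_x',\ell')$ equals, as a labelled rooted graph, its $r$-ball in $(\C G,c)$, since all relevant vertices sit in $N^{\le r+t}(x)$. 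Thus $f_n$ outputs the same value at $y$ in both views. By the correctness of the \Local\ algorithm on the valid input $(G_x',\ell')$, the constraint $\RT$ holds at $x$ in $G_x'$, hence also in $(\C G,a)$.

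The main difficulty will be balancing two competing scales: a Borel $2(r+t)$-sparse colouring of a graph of maximum degree~$d$ requires roughly $(d-1)^{\Theta(r)}$ colours, whereas the \Local\ algorithm demands an identifier set of size~$n$. The sublogarithmic round complexity $\Det_{\RT,\CH}(n)=o(\log n)$ is precisely what is needed to keep $r$ small enough relative to $\log n$ for the sparse-colouring budget and the algorithm's label-set size to match simultaneously.
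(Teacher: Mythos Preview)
Your proof is correct and follows essentially the same approach as the paper's: choose $n$ large enough that a Borel $2(r+t)$-sparse colouring $c:V\to n$ exists (where $r:=\Det_{\RT,\CH}(n)$ and $t$ is the radius of $\RT$), run the $r$-round \Local\ algorithm using $c$ as the identifier map, and verify correctness at each $x$ by restricting to the $(r+t)$-ball. The only cosmetic difference is that you explicitly pad $G_x$ with isolated vertices to obtain an $n$-vertex instance with a bijective labelling, whereas the paper appeals to the earlier remark that $\Det_{\RT,\CH}(n)$ is unchanged if one allows graphs in $\CH$ with at most $n$ vertices and injective (rather than bijective) labellings.
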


	\begin{proof} Let $t$ be the radius of~$\RT$. Fix some  sufficiently large $n$, namely we require that 
		$1+d\, \sum_{i=0}^{s-1} (d-1)^{i}\le n$, where 
		$ s:=2(\Det_{\RT,\CH}(n)+t)$. 
		This is possible since $s=o(\log n)$ by our assumptions. Take any local rule $\RR A$ of radius $r:=\Det_{\RT,\CH}(n)$ that works for all graphs from $\CH$ on at most $n$ vertices. Fix a Borel $2(r+t)$-sparse colouring $c:V\to n$ of $\C G$ which exists by Corollary~\ref{cr:RSparse} (and our choice of $n$).
		
		Apply the rule $\RR A$ to $(\C G,c)$, viewing $c:V\to n$ as the identifier function, to obtain a labelling $a:V\to \omega$. (Thus, informally speaking, we run the algorithm pretending that the graph $\C G$ has $n$ vertices.)		
		Note that $a$ is well-defined for every vertex $x\in V$ since, by our assumption on $\C G$, the subgraph induced by the $r$-ball $N^{\le r}(x)$ belongs to $\CH$ and is injectively labelled by the $2r$-sparse $n$-colouring~$c$. By Lemma~\ref{lm:LocalRule}, the function $a$ is Borel, so it is remains to check that it solves the LCL $\RT$. Take any $x\in V$. Consider 
		$
		H:=\induced{(\C G,c)}{N^{\le r+t}_{\C G}(x)}$,
		the labelled subgraph induced in $\C G$ by the $(r+t)$-ball around~$x$.   
		If we apply the rule $\RR A$ to $H$ then we obtain the same assignment $a$ on $N^{\le t}_H(x)=N^{\le t}_{\C G}(x)$, 
		because for every vertex in this set its $c$-labelled $r$-balls in $H$ and $\C G$ are the same. 
		The graph $H$ of diameter at most $2(r+t)$ is injectively labelled by the $2(r+t)$-sparse colouring $c$ and, in particular, has at most $n$ vertices.
		By the correctness of $\RR A$, the assignment $a$ satisfies the $\RT$-constraint at $x$ on $H$ (and also on $\C G$). Thus $a:V\to\omega$ is the required Borel assignment.\end{proof}

	Let us point some algorithmic results when $\CH$ consists of graphs with maximum degree bounded by a fixed integer~$d$ while $n$ tends to~$\infty$. The deterministic \Local\ complexities of a proper vertex $(d+1)$-colouring, a proper edge $(2d-1)$-colouring, a maximal independent set and a maximal matching are all $O(\log^* n)$, where $\log^* n$ is the \emph{iterated logarithm} of $n$,  the number of times needed to apply the logarithm function to $n$ to get a value at most~$1$. (
	For references and the best known bounds as functions of $(n,d)$, we refer the reader to~\cite[Table~1.1]{ChangKopelowitzPettie19}.)
	These are exactly the problems for which we showed the existence of a Borel solution in Theorems~\ref{th:MaxInd}--\ref{th:ChiB'}. 
	In fact, Theorems~\ref{th:MaxInd}--\ref{th:ChiB'} for bounded degree graphs are, by Theorem~\ref{th:LocBorel}, direct consequences  of the above mentioned results on the existence of efficient local algorithms. While there seems to be a large margin (between the running time of $O(\log^* n)$ for known algorithms and the $o(\log n)$-assumption of Theorem~\ref{th:LocBorel}), in fact, Chang, Kopelowitz and Pettie~\cite{ChangKopelowitzPettie19} showed that, for LCLs on bounded-degree graphs, if there is a \Local\ algorithm with $o(\log n)$ rounds that solves the problem then there is one with $O(\log^* n)$ rounds. (In fact, the proof of the last result is similar to the proof of Theorem~\ref{th:LocBorel}: fix a large constant $r$, generate a proper colouring $a$ of the $r$-th power of the input order-$n$ graph 
	using $O(\log^*n)$ rounds  and then ``simulate'' the $o(\log n)$ algorithm using the values of $a$ in lieu of the vertex identifiers.)
	
	On the other hand, deterministic algorithms using only $o(\log n)$ rounds seem to be rather weak, e.g.\ for colouring problems when the number of colours is even slightly below the trivial greedy bound. One (out of many) results demonstrating this is by Chang et al.~\cite[Theorem~4.5]{ChangKopelowitzPettie19} who showed that vertex $d$-colouring of trees (of maximum degree at most $d$) requires $\Omega(\log n)$ rounds.  
	\hide{Page 32 of BE says Linial87 showed that $\Omega(n)$ rounds are needed for 2-colouring trees (bounded degree?) but I cannot find this result there.}%
	The last result should be compared with Theorem~\ref{th:Marks16} here.
	
	Let us also briefly discuss another general (and much more difficult) transference result of Bernshteyn~\cite{Bernshteyn20arxiv} that randomised \Local\ algorithms that require $o(\log n)$ rounds on $n$-vertex graphs
	give measurable assignments.
	In an \emph{$r$-round randomised \Local\ algorithm} $\RR A$, each vertex $x$ generates at the beginning  its own random seed $s(x)$, a uniform element of $m$, independently of all other choices. Vertices  can share any currently known seeds during each of the $r$ communication rounds. Thus each obtained value $\RR A(x)$ can also depend on the generated seeds inside~$N^{\le r}(x)$. Equivalently, once the function $s:V\to m$ has been generated, the resulting assignment $\RR A$ is computed by some local rule on $(G,s)$.
	Given an LCL $\RT$, we say that the algorithm \emph{solves $\RT$} on an $n$-vertex graph $G=(V,E)$ if, for every vertex $x$ of $G$ the probability that $\RT(x)$ fails is at most~$1/n$. Let $\Rand_{\RT,\CH}(n)$ be the smallest $r$ such that some $r$-round randomised \Local\ algorithm (for some $m=m(\RT,\CH,n)$) solves $\RT$ for every $n$-vertex graph in~$\CH$.
	Note that we do not need any identifier function here, since $s(x)$ uniquely identifies a vertex $x$ with probability $(1-1/m)^{n-1}$ which can be made arbitrarily close to 1 by choosing $m$ sufficiently large.
	Under these conventions, Bernshteyn~\cite[Theorem 2.14]{Bernshteyn20arxiv} proved, roughly speaking, that if $\Rand_{\RT,\CH}(n)=o(\log n)$, then the corresponding LCL on bounded-degree Borel graphs admits a satisfying assignment which is $\mu$-measurable for any probability measure $\mu$ on $(V,\C B)$ (resp.\ Baire measurable for any given Polish topology $\tau$ on $V$ with $\sigma_V(\tau)=\C B(V)$).
	This result has already found a large number of applications, see \cite[Section 3]{Bernshteyn20arxiv}.

	\section{Borel Results that Use Measures or Baire Category}
	\label{se:Borel+}

	Of course, every result that an LCL admits no $\mu$-measurable (resp.\ no Baire measurable) solution on some Borel graph $\C G$ automatically implies that no Borel solution exists for $\C G$ either. There is a whole spectrum of techniques for proving results of this type (such as 
	ergodicity 
	that was briefly discussed in Section~\ref{se:Intro} in the context of Example~\ref{ex:Rotation}). We refer the reader to the survey by Kechris and Marks~\cite{KechrisMarks:survey} that contains many further examples of this kind.
	
	Rather surprisingly, measures have turned to be useful in proving also the existence of full Borel colourings for some problems. Let us briefly discuss a few such (very recent) results.

	One is a result of Bernshteyn and Conley~\cite{BernshteynConley19arxiv} who proved a very strong
	Borel version of the theorem of Hajnal and Szemer\'edi~\cite{HajnalSzemeredi70}. Recall that the original Hajnal-Szemer\'edi Theorem states that if $G$ is a finite graph of maximum	degree $d$ and $k\ge d+1$ then $G$ has an \emph{equitable colouring} (that is, a proper colouring $c:V\to k$ such that every two colour classes of $c$ differ in size at most by $1$). For a Borel graph $\C G$,  let us call a Borel $k$-colouring, given by a partition $V=V_0\cup\dots\cup V_{k-1}$, \emph{equitable} if for every $i,j\in k$ there is a Borel bijection $g:V_i\to V_j$ with $\graph{g}\subseteq \C E\OptionalCG $.
	Of course, an obvious obstacle here is the existence of a finite component whose size is not divisible by~$k$. The main result of Bernshteyn and Conley~\cite[Theorem 1.5]{BernshteynConley19arxiv} is that this is the only obstacle. Very briefly, the proof in~\cite{BernshteynConley19arxiv} runs a Borel version of the algorithm of  
	Kierstead, Kostochka, Mydlarz and Szemer\'edi~\cite{KiersteadKostochkaMydlarzSzemeredi10} that finds equitable colourings in finite graphs, where a current proper $k$-colouring gets 
	``improved'' from the point of view of equitability
	via certain recolouring moves. This iterative procedure gives only a partial colouring as it is unclear how to colour the set $X$ of components that contain vertices that change their colour infinitely often. However, Bernshteyn and Conley~\cite{BernshteynConley19arxiv} proved via the Borel--Cantelli Lemma that if $\mu$ is an arbitrary probability measure on $(V,\C B)$ which is invariant, then $\mu(X)=0$. Note that the conclusion holds even though the definition of $X$ does not depend on $\mu$. This means that $\induced{\C G}{X}$ does not admit any invariant probability measure. This is, by a result of Nadkarni~\cite{Nadkarni90}, equivalent to $X$ being \emph{compressible}, meaning that there is a Borel set $A\subseteq V$ intersecting every component of $\induced{\C G}{X}$ and a Borel bijection $f: X\to X\setminus A$ with $\graph{f}\subseteq \C E_{\C G}$. (Note that the converse direction in Nadkarni's result is easy: if such a function $f$ exists and $\mu$ is an invariant probability measure on $(X,\C B)$, then the Borel sets $A,f(A),f(f(A)),\dots$ are disjoint and have the same $\mu$-measure, which is thus 0; however, then the saturation $X$ of the $\mu$-null set $A$ cannot have positive $\mu$-measure, a contradiction.) 
	Then a separate argument shows that the remaining compressible graph~$\induced{\C G}{X}$ admits a full Borel equitable colouring.

	Another result that we would like to discuss comes from a recent paper of Conley and Tamuz~\cite{ConleyTamuz20}. Call a colouring $c:V\to 2$ \emph{unfriendly} if every $x\in V$ has at least as many neighbours of the other colour as of its own, that is,
	$$
	|\,\{y\in N(x)\mid c(y)\not= c(x)\}\,|\ge | \,\{y\in N(x)\mid c(y)= c(x)\}\,|.
	$$
	Such a colouring trivially exists for every finite graph: take, for example, one that maximises the number of non-monochromatic edges (i.e.\ a max-cut colouring). Also, the Axiom of Choice (or the Compactness Principle) shows that every locally finite graph admits an unfriendly colouring. Interestingly, Shelah and Milner~\cite{ShelahMilner90} showed that there are graphs with uncountable vertex degrees that have no unfriendly colouring; the case of locally countable graphs is open.
	
	Conley and Tamuz~\cite{ConleyTamuz20} showed that every Borel bounded-degree graph admits a Borel unfriendly colouring provided the graph has \emph{subexponential growth} meaning that
	\beq{eq:SubExp}
	\forall \e>0\ \exists n_0\in\omega\ \forall n\ge n_0\ \forall x\in V\quad  |N^{\le n}(x)|\le
	(1+\e)^n,
	\eeq
	or, informally, that $n$-balls have size at most $(1+o(1))^n$ uniformly in~$n$.
	Like in~\cite{BernshteynConley19arxiv}, they use a family of augmenting moves so that, for every invariant probability measure $\mu$ on $(V,\C B)$, the set $X$ where the constructed colourings do not stabilise has $\mu$-measure 0. Here, each move is very simple: if more than half of neighbours of a vertex $x$ have the same colour as $x$, then we change the colour of~$x$. We do these moves in stages so that the moves made in one stage do not interfere with each other (similarly as in the proof of Theorem~\ref{th:ElekLippner}). Since the graph has subexponential growth, for every $x\in V$ there is a probability measure $\mu_x$ which is very close to being invariant and satisfies $\mu_x(\{x\})>0$. (For example, take the discrete measure that is supported on $[x]$
	and puts weight $(1-\e)^i\mu_x(\{x\})$ on each vertex at distance $i$ from $x$ for all $i\in\omega$.) Since there happens to be some leeway when applying the Borel--Cantelli Lemma in the invariant case, it can also be applied to $\mu_x$ provided $\e>0$ is sufficiently small. As $\{x\}$ has positive measure in $\mu_x$, it cannot belong to~$X$. Thus $X=\emptyset$, as desired. 
	
	This idea was used also by Thornton~\cite{Thornton20arxiv} to prove that every Borel graph of maximum degree $d$ and of subexponential growth admits a Borel orientation of edges so that each out-degree is at most $d/2+1$.

	It is not clear if compressibility helps for the last two problems. In particular, it remains an open problem if, for example, every 3-regular Borel graph admits an unfriendly Borel $2$-colouring.

	\section{Graphs of Subexponential Growth}
	\label{se:SubExp}

	Recall that the notion of subexponential growth was defined in~\eqref{eq:SubExp}.
	Unfriendly $2$-colouring is one example of 
	a problem which admits a Borel solution for every graph of subexponential growth but this becomes an open problem or a false statement when the growth assumption is removed. Let us just point to some general results which show that Borel graphs of subexponential growth are indeed more tractable.
	
	Cs{\'o}ka, Grabowski, {M\'ath\'e}, Pikhurko and Tyros~\cite{CsokaGrabowskiMathePikhurkoTyros:arxiv} showed that such graphs admit a Borel satisfying assignment for every LCL where the existence of global solution can be established by the symmetric Lov\'asz Local Lemma.
	The Local Lemma, introduced in a paper of Erd\H os and Lov\'asz~\cite{ErdosLovasz75}, is a very powerful tool for proving the existence of a  satisfying assignment. A special case of it is as follows. Suppose that  we have a collection of bad events $\{B\mid B\in V\}$,
	each having probability at most $p$ and being a function of some finite set $\var(B)$ of binary random variables. Define the \emph{dependency graph} $D$ on $V$, where two events are connected if they share at least one variable. The Local Lemma gives that if
	\beq{eq:LLL}
	p<\frac{(\Delta-1)^{\Delta-1}}{\Delta^\Delta},
	\eeq
	where $\Delta$ is the maximum degree of $D$,
	then there is an assignment of variables such that no bad event occurs. (Remarkably, Shearer~\cite{Shearer85} showed that the bound in~\eqref{eq:LLL} is, in fact, best possible.)

	The Borel version of this result from~\cite{CsokaGrabowskiMathePikhurkoTyros:arxiv} is quite technical to state; informally speaking it states that if bad events and variables are indexed by elements of some standard Borel space $V$ so that the corresponding dependency graph $\C D$ on $V$ is Borel and has subexponential growth, and~\eqref{eq:LLL} holds, then there is a Borel assignment of variables such that no bad event occurs. For other ``definable'' versions of the Local Lemma, see Bernshteyn~\cite{Bernshteyn19am,Bernshteyn20arxiv} and Kun~\cite{Kun13arxiv}.

	An efficient randomised algorithm that finds an assignment in finite graphs whose existence is guaranteed by the Local Lemma was found in a breakthrough work of 
	Moser and Tardos~\cite{Moser09,MoserTardos10}. Actually, one example of a good algorithm is very simple: start with any initial assignment and, as long as there is an occurrence of some bad event $B$, pick one such $B$ arbitrarily and re-sample all variables in $\var(B)$. 
	This can be adopted to the Borel setting using the ideas of the proof of Theorem~\ref{th:ElekLippner} as follows. Fix some sufficiently large~$r_0$. Take an $r_0$-sparse Borel colouring $c$ of the dependency graph $\C D$. For each colour $i$, generate a uniform random sequence of binary bits $b_i:=(b_{i,j})_{j\in\omega}$. (Note that we have to generate only countably many bits.) Let the initial assignment of variables be $x\mapsto b_{c(x),0}$. At each iteration, find via Theorem~\ref{th:MaxInd} a Borel set $I$ of currently occurring bad events in which every two have disjoint sets of variables and, moreover, $I$ is a maximal set with this property. (Note that the subexponential growth assumption implies that $\C D$ has finite maximum degree.) Reassign the value of every $x$ in the (disjoint) union $\cup_{B\in I} \var(B)$ to the next bit of the sequence $b_{c(x)}$ that has not been used by the variable $x$ yet. The problem with this naive adaptation is that variables which are far away in $\C D$ can depend on each other and the estimates of Moser and Tardos do not apply because long chains of interdependent bad events may have now very different probabilities.

	The key idea of~\cite{CsokaGrabowskiMathePikhurkoTyros:arxiv} is that, because of the subexponential growth assumption, if some variable $y\in V$ is resampled many times at some finite stage of this procedure then there are another variable $x$ and an integer $r\le r_0/2$ such that, among all resampled bad events $B$, the number of \emph{internal} ones (those with $\var(B)\subseteq N^{\le r}(x)$) is very small compared with the number of \emph{boundary} ones (those with $\var(B)$ intersecting both $N^{\le r}(x)$ and its complement). The internal resamples obey the Moser--Tardos estimates because, by $r\le r_0/2$, they never use the same random bit twice. On the other hand, the number $k$ of boundary resamples is so small that we can afford to take the Union Bound over all possible ways of how at most $k$ uncontrollable boundary events can pop up around $x$ during the run of the algorithm. This means that, there is an assignment of binary bits $b_{i,j}$ and some $n\in\omega$ such that \textbf{every} variable is resampled at most $n$ times. It follows by the finiteness of $\Delta(\C D)$ that this procedure stabilises for every variable. The final colouring, as a finitary function, is Borel by Corollary~\ref{cr:Finitary}.

	The following general application of the Borel Local Lemma from~\cite{CsokaGrabowskiMathePikhurkoTyros:arxiv} was observed by Bernshteyn~\cite[Theorem 2.15]{Bernshteyn20arxiv}:  if $\Rand_{\CH,\RT}(n)=O(\log n)$ and $\C G$ is a Borel graph of subexponential growth with every ball belonging to the graph family $\CH$ then there is a Borel assignment $a:V\to\omega$ solving the LCL $\RT$. (Recall that $\Rand_{\CH,\RT}(n)$, as defined in Section~\ref{se:Bernshteyn20arxiv}, is the smallest number of rounds in a randomised \Local\ algorithm that fails any one $\RT$-constraint with probability at most $1/n$ for every $n$-vertex graph in~$\C G$.) The proof idea is as follows. Fix large $n$ and a suitable randomised algorithm that uses $r:=\Rand_{\CH,\RT}(n)$ rounds for some~$m$. This gives an $r$-local rule $\RR A$ that can be evaluated on $V$ once we have some seed function $s:V\to m$. We view the values of $s$ as variables. For every $x\in V$, let the ``bad'' event $B_x$ state that the assignment returned by $\RR A$ fails the $\RT$-constraint at~$x$. If the radius of the local rule $\RT$ is $t$, then $B_x$ depends only on the values of $s$ in $N^{\le r+t}(x)$. In particular, $B_x$ and $B_y$ can share a variable only if the distance between $x$ and $y$ is at most $2(r+t)$. Thus the maximum degree of the corresponding dependency graph $\C D$ is at most the maximum size of a ball in $\C G$ of radius $2(r+t)$. This is at most $o(n)$ since $r=O(\log n)$, $t$ is a constant and $\C G$ has  subexponential growth. Also, the probability of each $B_x$ (for a random uniform function $s:N^{\le r+t}(x)\to m$) is at most~$1/n$. Thus the Borel Local Lemma from~\cite{CsokaGrabowskiMathePikhurkoTyros:arxiv} (which also works if the bits $b_{i,j}$ are $m$-ary instead of binary)  gives that there is a Borel assignment $s:V\to m$ with no bad event $B_x$ occurring. Then the evaluation of $\RR A$ on $(\C G,s)$ satisfies the LCL $\RT$ and is a Borel function by Lemma~\ref{lm:LocalRule}.
	
	Thus any LCL that can be solved by a randomised \Local\ algorithm of radius $O(\log n)$ on graphs of order $n\to\infty$ admit Borel solutions on any Borel graph of subexponential growth. 
	For some examples of such problems that are interesting from the point of view of Borel combinatorics, see Section~3 in~\cite{Bernshteyn20arxiv}. 
	One is the result of Molloy and Reed~\cite{MolloyReed14} who proved that, for $d\ge d_0$ with $k_d$ being the maximum integer with $(k+1)(k+2)\le d$, if each 1-ball in a graph $G$ of maximum degree $d$ can be properly coloured with $c\ge d-k_d$ colours, then in fact the whole graph $G$ can be properly coloured with $c$ colours. For large $d$, this is a far-reaching generalisation of Brooks' theorem~\cite{Brooks41} which, for $d\ge 3$, corresponds to the case $c=d$ of the above implication. Bamas and Esperet~\cite{BamasEsperet18arxiv,BamasEsperet19} proved that, for a fixed large $d$, a $c$-colouring of an $n$-vertex graph $G$ whose existence is guaranteed by the above result of Molloy and Reed, can in fact be found by a randomised \Local\ algorithm using $o(\log n)$ rounds. Putting all together, we conclude that every Borel graph $\C G$ of maximum degree $d\ge d_0$ and subexponential growth has Borel chromatic number at most $\max(d-k_d,\chi(\C G))$. Note that the last statement fails if we remove the growth assumption, even when we look at $d$-colourings only: indeed, the graph $\C G$ given by Theorem~\ref{th:Marks16} has Borel chromatic number $d+1$ while the whole graph $\C G$, as thus each of its 1-balls, is bipartite (as $\C G$ has no cycles at all).

	\section{Applications to Equidecomposability}
	\label{se:Equidec}
	
	In order to demonstrate how some of the above results are applied, let us very briefly discuss the question of equidecomposability, where the
	methods of descriptive combinatorics have been recently applied with great success.
	
	Two subsets $A$ and $B$ of $\I R^n$ are called \textit{equidecomposable} if it is possible to find a partition $A=A_0\cup\cdots\cup A_{m-1}$ and isometries $\gamma_0,\dots,\gamma_{m-1}$ of $\I R^n$ so that $\gamma_0.A_0,\dots,\gamma_{m-1}.A_{m-1}$
	partition the other set $B$ (or, in other words, we can split $A$ into finitely many pieces and rearrange them using isometries to form a partition of~$B$). The most famous result about equidecomposable sets is probably the \textit{Banach-Tarski Paradox}~\cite{BanachTarski24}: in 
	$\I R^3$, the unit ball and two disjoint copies of the unit ball are equidecomposable.
	
	Equidecompositions are often constructed to show  that certain kinds of isometry-invariant means do not exist. For example, the Banach-Tarski Paradox implies that every finitely additive isometry-invariant  mean  defined on all bounded subsets of~$\I R^3$ must be identically~0. We refer the reader to
	the monograph by Tomkowicz and Wagon~\cite{TomkowiczWagon:btp} on the subject. 
	
	The connection to descriptive combinatorics comes from a well-known observation that if one fixes the set of isometries $S=\{\gamma_0,\dots,\gamma_{m-1}\}$ to be used, then an equidecomposition between $A$ and $B$ is equivalent to a perfect matching in the bipartite graph
	\beq{eq:GABS}
	G
	:=\left(\,A\sqcup B,\big\{\,(a,b)\,\in A\times B\mid \exists \gamma\in S\ \gamma.a=b\,\}\,\big\}\,
	\right).
	\eeq
	
	A fairly direct application of Theorem~\ref{th:BaireM} of Marks and Unger~\cite{MarksUnger16} gives a new proof of the important result of Dougherty and Foreman~\cite{DoughertyForeman94}, whose original proof was very complicated, that doubling a ball in the Banach-Tarski Paradox can be done with pieces that have the property of Baire.  
	\hide{One of equivalent reformulation of the latter result is the following. Let us write $A\sim U_0\sqcup\ldots\sqcup U_{n-1}$ to mean that $U_0,\dots, U_{n-1}$ are pairwise disjoint open set such that $A$ is the topological closure of their union. Dougherty and Foreman~\cite{DoughertyForeman94} proved that  there are finitely pairwise disjoint open sets $U_0,\dots,U_{n-1}$ and isometries~$\gamma_0,\dots,\gamma_{n-1}$ of $\I R^3$ so that $\I B\sim 
		U_0\sqcup\ldots\sqcup U_{n-1}$ while $\I B\cup (\I B+3)\sim 
		\gamma_0.U_0\sqcup\ldots\sqcup \gamma_{n-1}.U_{n-1}$, where $\I B:=\{x\in\I R^3\mid \|x\|_2\le 1\}$ is the Euclidean unit ball.}

	Of course, doubling a ball is impossible with Lebesgue measurable pieces because equidecompositions have to preserve the Lebesgue measure (as it is invariant under isometries). 
	Interestingly, the obvious necessary condition for $A\subseteq \I R^n$, $n\ge 3$, to be equidecomposable with Lebesgue measurable pieces to, say, the cube $[0,1]^n$ (namely, $A$ is Lebesgue measurable of measure 1, and finitely many congruents of each of $A$ and $[0,1]^n$ are enough to cover the other set) was shown by Grabowski, M\'ath\'e and Pikhurko~\cite{GrabowskiMathePikhurko:expansion} to be sufficient. The proof carefully chooses 
	isometries $\gamma_0,\dots,\gamma_{m-1}$, applies Theorem~\ref{th:LN} of Lyons and Nazarov~\cite{LyonsNazarov11} to the graph $G$ defined in~\eqref{eq:GABS} (after removing a null set from $A$ and $B$ to make these sets and the graph $G$ Borel) and fixes the remaining null sets of unmatched vertices using the Axiom of Choice. The hardest part here was to show the existence of suitable isometries such that the bipartite graph $G$ is a ``measure expander'', although the current version of~\cite[Section~6.5]{GrabowskiMathePikhurko:expansion} points out a few different proofs of this step, 
	all relying on some version of the spectral gap property.
	
	The Borel version of the above result, say, if every two bounded Borel subsets of $\I R^n$, $n\ge 3$, with non-empty interior and the same Lebesgue measure are equidecomposable with Borel pieces, remains open. (The examples by Laczkovich~\cite{Laczkovich93,Laczkovich03} show that this statement is false for $n\le 2$.) However, if the sets have ``small'' boundary then the following very strong results can be proved in every dimension; in particular, they all apply to the famous \emph{Circle Squaring Problem} of Tarski~\cite{Tarski25}. (We refer the reader to corresponding papers for all missing definitions.)
	
	\begin{theorem}
		\label{th:SmallBoundary} Let $n\ge 1$ and $A,B\subseteq \I R^n$ be bounded sets with non-empty interior such that $\mu(A)=\mu(B)$ (where $\mu$ denotes the Lebesgue measure on $\I R^n$) and $\boxdim(\partial A),\boxdim(\partial B)<n$ (i.e.\ their topological boundaries have upper Minkowski dimension  less than $n$).
		
		\begin{enumerate}
			\item {\rm{(Laczkovich~\cite{Laczkovich92,Laczkovich92b}):}} The sets $A$ and $B$ are equidecomposable using translations.
			
			\item {\rm{(Grabowski, M\'{a}th\'{e} and Pikhurko~\cite{GrabowskiMathePikhurko17}):}} The sets $A$ and $B$ are equidecomposable using translations with pieces that are both Lebesgue and Baire measurable.
			
			\item {\rm{(Marks and Unger~\cite{MarksUnger17}):}} If, additionally, the sets $A$ and $B$ are Borel then they are equidecomposable using translations with Borel pieces.
			
			
		\end{enumerate}
	\end{theorem}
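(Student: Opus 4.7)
The unifying strategy for all three parts is to fix a clever choice of $m$ translations $\tau_1,\dots,\tau_m$ of $\I R^n$, form the bipartite graph $G$ on $A\sqcup B$ as in~\eqref{eq:GABS} with $S=\{\tau_1,\dots,\tau_m\}$, and find a perfect matching in $G$ of the appropriate definability class; such a matching immediately yields the desired equidecomposition (with $m$ pieces) since each edge $(a,b)$ tells us which translation $\tau_i$ sends $a$ to $b$. The graph $G$ is locally finite (every vertex has degree at most $m$), and if $A,B$ are Borel then $G$ is a Borel bipartite graph on the standard Borel space $A\sqcup B$.

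The crucial number-theoretic input, which I would take from Laczkovich's original work, is a choice of translations $\tau_i$ (typically by vectors $v_1,\dots,v_m$ whose entries satisfy strong simultaneous-Diophantine approximation properties) such that the $\I Z^m$-action they generate on $\I R^n$ is free and enjoys sharp \emph{discrepancy estimates}: for any bounded set $C$ with $\boxdim(\partial C)<n$ and any ``box'' $F\subseteq \I Z^m$, the number of $\gamma\in F$ with $\gamma.x\in C$ is $\mu(C)/\mu(A\cup B)\cdot |F|$ up to a controlled error that grows strictly slower than the surface area of $F$. Using $\mu(A)=\mu(B)$ and the boundary assumption, this translates into a Hall-type expansion estimate on $G$: every sufficiently large connected finite $X$ in one part satisfies $|N(X)|\ge (1+\e)|X|$, while Hall's condition $|N(X)|\ge |X|$ itself can be arranged to hold for \emph{all} finite~$X$. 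With this in hand, part~(1) follows by applying Rado's Theorem~\ref{th:Rado42} to get a perfect matching (with no definability requirement), part~(2) follows in the Baire category case from Theorem~\ref{th:BaireM} of Marks and Unger combined with the Axiom of Choice on the remaining meager invariant component, and part~(2) for Lebesgue measurability follows from Theorem~\ref{th:LN} of Lyons and Nazarov applied to the graphing obtained by normalising Lebesgue measure on $A\cup B$, again with AC disposing of the null remainder. Moreover both exceptional sets (meager and null) can be arranged to lie inside a single set with empty interior, so the two definability statements of~(2) can be established simultaneously.

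For part~(3), the main obstacle is precisely the step that~(2) sidesteps: we cannot tolerate any exceptional set of unmatched vertices, because there is no fully Borel ``AC fallback''. My plan here is to follow Marks and Unger by combining the Hall-expansion strategy above with an augmenting-path procedure in the spirit of Theorem~\ref{th:ElekLippner}, now driven by the sharp $(1+\e)$-expansion at large scales. Starting from a Borel matching covering all but a null set (as in part~(2)), one iteratively eliminates finite augmenting paths of growing length in a Borel way; the discrepancy-plus-boundary input is strong enough to force that no augmenting path survives at all, not merely generically. The hard part, and where most of the work of~\cite{MarksUnger17} lies, is to show that the $(1+\e)$-expansion from the translation choice is uniform enough to rule out \emph{all} pathologies of the remaining set, not just a measure-theoretic majority; this is where the boundary hypothesis $\boxdim(\partial A),\boxdim(\partial B)<n$ is used in its sharpest form, and where the proof genuinely departs from the measurable/Baire arguments.
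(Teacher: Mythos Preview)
Your outline for parts~(1) and~(2) is broadly in line with the paper's sketch, though note that for~(2) the paper describes \cite{GrabowskiMathePikhurko17} as using augmenting local algorithms tailored to $\I Z^d$-actions (with the Borel--Cantelli Lemma handling the null part and an adaptation of the \emph{proof} of Theorem~\ref{th:BaireM} handling the meager part), rather than a direct invocation of Theorems~\ref{th:BaireM} and~\ref{th:LN}. In particular, a direct application of Theorem~\ref{th:LN} needs \emph{measure} expansion, which is a spectral-gap property genuinely different from the finite-set discrepancy estimates you describe; the paper flags establishing measure expansion as the hardest step of the analogous argument in~\cite{GrabowskiMathePikhurko:expansion}.

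The real gap is your plan for part~(3). You propose to run an augmenting-path procedure in the style of Theorem~\ref{th:ElekLippner} and argue that the discrepancy input forces stabilisation \emph{everywhere}. This is precisely the step that is not known to work, and it is why \cite{MarksUnger17} is a genuine breakthrough. Marks and Unger do not use matchings or augmenting paths at all. Their argument has three steps: (i) construct a uniformly bounded \emph{real-valued} Borel flow from $A$ to $B$ in the Schreier graph of the $\I Z^d$-action on the torus; (ii) \emph{round} this flow to an integer-valued Borel flow, using a result of Gao, Jackson, Krohne and Seward that any free Borel $\I Z^d$-action admits a Borel family of finite connected ``tiles'' whose boundaries are well separated (a strong hyperfiniteness witness); (iii) convert the integer flow to a Borel bijection by a local rule. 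Step~(ii) is the crux, and it depends essentially on this special tiling structure of $\I Z^d$-actions rather than on expansion alone. Your sentence ``the discrepancy-plus-boundary input is strong enough to force that no augmenting path survives at all'' is exactly the assertion nobody knows how to justify; without the flow-and-rounding detour you have no mechanism to eliminate a residual Borel set of unmatched vertices.
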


	Very briefly, some of the key steps in the above results are as follows. All papers assume $A,B\subseteq [0,1)^n$ and work modulo 1 (i.e.\ inside the torus $\I T^n:=\I R^n/\I Z^n$).
	Given $A$ and $B$, we first choose a large integer $d$ and then some vectors $\V x_0,\dots,\V x_{d-1}\in \I T^n$ (a random choice will work almost surely). In particular, we assume that $\V x_i$'s are linearly independent over the rationals. Let $\C G$ be the Schreier graph of the natural action of the additive marked group $\Ga\cong \I Z^d$ generated by $S:=\{\sum_{i\in d} \e_i \V x_i\mid \e_i\in \{-1,0,1\}\}$ on the torus $\I T^n$. 
	In other words, $\C G$ has $[0,1)^n$ for the vertex set with distinct $\V x,\V y\in [0,1)^n$ being adjacent  if their difference modulo 1 belongs to~$S$. 
	Thus each component of $\C G$ is a copy of the ($3^d-1$)-regular graph on~$\I Z^d$. We
	fix large $N$ and look for a bijection $\phi:A\to B$ such that for every $a\in A$ the distance in the graph $\C G$ between $a$ and $\phi(a)$ is at most $N$. If we succeed, then we have equidecomposed the sets $A$ and $B$ using at most $(2N+1)^d$ parts.

	The deep papers of Laczkovich~\cite{Laczkovich90,Laczkovich92,Laczkovich92b} show that the assumption $\boxdim(\partial A)<n$ translates into the set $A$ being really well distributed inside each component of $\C G$ and this property in turn shows that if $N$ is large then the required bijection $\phi$ exists by Rado's theorem  (Theorem~\ref{th:Rado42}). This crucially uses the Axiom of Choice (since Rado's theorem does). 
	
	The equidecompositions built in~\cite{GrabowskiMathePikhurko17} come from some careful augmenting local algorithms of growing radii (tailored specifically to $\I Z^d$-actions) and showing that the set where they do not stabilise is null (by the Borel--Cantelli Lemma) and meager (by adopting the proof of Theorem~\ref{th:BaireM}).
	
	Marks and Unger~\cite{MarksUnger17} approached this problem in a novel way via real-valued flows in the graph $\C G$. The advantage of working with flows (versus matchings) is that, for example, any convex combination of feasible flows is again a feasible flow.
	First, Marks and Unger showed that there is a real-valued uniformly bounded Borel flow $f$ from $A$ to $B$ (which can be viewed as a fractional version of the required bijection~$\phi$).  Secondly, they proved that one can \emph{round} $f$ to a flow $h$ (which has the same properties as $f$ and, additionally, assumes only integer values). Finally, they showed that the flow $h$ can be converted into the desired Borel bijection $\phi:A\to B$ via a local rule. In this approach the second step is the most difficult one. This step relies on the unpublished result of Gao, Jackson, Krohne, and Seward announced in~\cite{GaoJacksonKrohneSeward15arxiv}
	(for a proof see~\cite[Theorem 5.5]{MarksUnger17}) that $\C G$ (or, more generally, the Schreier graph coming from any free Borel action of $\I Z^d$) admits a Borel family $\C C$ of finite connected vertex sets that cover all vertices and whose boundaries in $\C G$ are sufficiently far from each other. 
	(In other words, the family $\C C$ provides a certificate of hyperfiniteness of $\C E_{\C G}$ that has an extra boundary separation property.) 
	One can arrange $\C C$ to arrive in $\omega$-many stages so that, for any newly arrived set $C\in\C C$, every previous set is either deep inside $C$ or far away from~$C$.
	The rounding algorithm from the point of view of any vertex $x$ is to wait until some $C\in\C C$ containing $x$ arrives and then round all $f$-values inside $C$  in agreement with the other vertices of $C$, making sure not to override any rounding made inside any earlier $C'\in\C C$ with $C'\subseteq C$. Thus the final integer-valued flow $h$ can be computed by a finitary rule on $\C G$ (depending of the real-valued Borel flow $f$) and is Borel
	by a version of Corollary~\ref{cr:Finitary}.
	
	More recently, M\'ath\'e, Noel and Pikhurko~\cite{MatheNoelPikhurko} strengthened the results from~\cite{MarksUnger17} by proving that, additionally, we can require that the pieces themselves  have boundary of upper Minkowski dimension less than $n$ (and, in particular, are Jordan measurable). Also, it is shown in~\cite{MatheNoelPikhurko} that if the sets $A$ and $B$ in Theorem~\ref{th:SmallBoundary} are, say, open then each piece can additionally be a Boolean combination of \emph{$F_\sigma$-sets} (i.e.\ countable unions of closed sets). 
	These improvements started with the new result that, in order to find an integer-valued flow $h$ inside some $C\in\C C$ that will be compatible with all future rounding steps, we need to know only some local information, namely, the ball around~$C$ of sufficiently large but finite radius. Thus the new rounding algorithm does not need to know the flow $f$ (which may depend on the whole component of $\C G$). 
	
	\section{Concluding Remarks}
	
	Due to the limitation on space, we have just very briefly touched on some very exciting topics, with each of Sections~\ref{se:CBER}--\ref{se:Measurable} deserving a separate introductory paper (perhaps even a few, as is the case of $\mu$-measurable combinatorics). Also, there are some other topics that we have not even mentioned (the Borel hierarchy, analytic graphs and equivalence relations; treeability; the cost of a measure-preserving group action; combinatorial cost; classical/sofic entropy and other invariants; 
	the local-global convergence of bounded degree graphs; ``continuous combinatorics'' on zero-dimensional Polish spaces, etc). Also, we have presented hardly any open questions; we refer the reader to e.g.\ the survey by Kechris and Marks~\cite{KechrisMarks:survey} that contains quite a few of them.
	
	Nonetheless, the author hopes that this paper will be helpful in introducing more researchers to this dynamic (in both meanings of the word) and exciting area.

\thankyou{The author is grateful to Jan Greb\'\i k, V\'aclav Rozho\'n and the anonymous referee for the very useful  comments and/or discussions.
}

\providecommand{\bysame}{\leavevmode\hbox to3em{\hrulefill}\thinspace}
\providecommand{\MR}{\relax\ifhmode\unskip\space\fi MR }
\providecommand{\MRhref}[2]{%
	\href{http://www.ams.org/mathscinet-getitem?mr=#1}{#2}
}
\providecommand{\href}[2]{#2}



\myaddress

\end{document}